  \theoremstyle{plain}
    \newtheorem{thm}{Theorem}[section]
    \newtheorem{prop}[thm]{Proposition}
   \newtheorem{lemma}[thm]{Lemma}
    \newtheorem{subsec}[thm]{}
\theoremstyle{definition}
    \newtheorem{defn}[thm]{Definition}
        \newtheorem{remark}[thm]{Remark}
    \newtheorem{exam}[thm]{Example}
\theoremstyle{remark}
\title{}
\author{}
\date{}
\begin{document}

\title[Deformation maps  in proto-twilled Leibniz algebras]{Deformation maps  in proto-twilled Leibniz algebras}

\author{Apurba Das}
\address{Department of Mathematics,
Indian Institute of Technology, Kharagpur 721302, West Bengal, India}
\email{apurbadas348@gmail.com, apurbadas348@maths.iitkgp.ac.in}

\author{Suman Majhi}
\address{Department of Mathematics,
Indian Institute of Technology, Kharagpur 721302, West Bengal, India}
\email{majhisuman693@gmail.com}

\author{Ramkrishna Mandal}
\address{Department of Mathematics, Indian Institute of Technology, Kharagpur 721302, West Bengal, India}
\email{ramkrishnamandal430@gmail.com}

\begin{abstract}
This paper aims to find a unified approach to studying the cohomology theories of various operators on Leibniz algebras. We first introduce deformation maps in a proto-twilled Leibniz algebra to do this. Such maps generalize various well-known operators (such as homomorphisms, derivations, crossed homomorphisms, Rota-Baxter operators, modified Rota-Baxter operators, twisted Rota-Baxter operators, Reynolds operators etc) defined on Leibniz algebras and embedding tensors on Lie algebras. We define the cohomology of a deformation map unifying the existing cohomologies of all the operators mentioned above. Then we construct a curved $L_\infty$-algebra whose Maurer-Cartan elements are precisely deformation maps in a given proto-twilled Leibniz algebra. In particular, we get the Maurer-Cartan characterizations of modified Rota-Baxter operators, twisted Rota-Baxter operators and Reynolds operators on a Leibniz algebra. Finally, given a proto-twilled Leibniz algebra and a deformation map $r$, we construct two governing $L_\infty$-algebras, the first one controls the deformations of the operator $r$ while the second one controls the simultaneous deformations of both the proto-twilled Leibniz algebra and the operator $r$.
\end{abstract}

\maketitle



\medskip

\begin{center}
\noindent {2020 MSC classification:} 17A32, 17B38, 17B40, 17B56.

\noindent  {Keywords:} Proto-twilled Leibniz algebras, Deformation maps, Cohomology, Curved $L_\infty$-algebras,  Maurer-Cartan elements.
\end{center}

 



\thispagestyle{empty}

\tableofcontents


\medskip

\section{Introduction}
Leibniz algebras are a noncommutative analogue of Lie algebras. Bloh \cite{bloh} first introduced these algebras under the name of $D$-algebras and later rediscovered them by Loday \cite{loday} who called them Leibniz algebras. In the same paper, Loday introduces the homology theory of Leibniz algebras as the noncommutative generalization of Lie algebra homology. Subsequently, Loday and Pirashvili \cite{loday-pira} defined the cohomology theory of Leibniz algebras with coefficients in a representation. This cohomology can also be regarded as the noncommutative analogue of the Chevalley-Eilenberg cohomology of Lie algebras. In \cite{bala} Balavoine defined a graded Lie bracket (known as {\em Balavoine bracket}) whose Maurer-Cartan elements correspond to Leibniz algebra structures on a given vector space. This characterization allows him to describe Loday-Pirashvili's coboundary operator of a Leibniz algebra in terms of the Balavoine bracket and study formal deformations of a Leibniz algebra. Various studies on Leibniz algebras have been conducted in connection with mathematics and mathematical physics in the last twenty years. In particular, homotopy theory \cite{aamar,khuda}, integration \cite{covez,bor} and deformation quantization \cite{dherin} of Leibniz algebras are well-studied. As the underlying algebraic structures of embedding tensors, Leibniz algebras also have close connections with higher gauge theories \cite{sheng-embed,kotov}.

\medskip

Given an algebraic structure (e.g. associative, Lie or Leibniz), one may define various interesting operators on it. Algebra homomorphisms, derivations and crossed homomorphisms are the most well-known operators that are often useful for understanding the underlying algebraic structure. In the last few years, Rota-Baxter operators and their cousins (relative Rota-Baxter operators, Rota-Baxter operators with weight, modified Rota-Baxter operators, twisted Rota-Baxter operators, Reynolds operators and averaging operators etc.) were extensively studied on Lie algebras and associative algebras. They have close connections with combinatorics, splitting of algebras, bialgebra theory and integrable systems. It is important to remark that a (relative) Rota-Baxter operator on a Lie algebra is intimately related to the solutions of the classical Yang-Baxter equation \cite{kuper} while a Rota-Baxter operator on an associative algebra appears in the renormalization of quantum field theory \cite{conn}. In \cite{tang} the authors first introduced the cohomology and deformations of a (relative) Rota-Baxter operator defined on a Lie algebra. Subsequently, the cohomology and deformations of a (relative) Rota-Baxter Lie algebra (i.e. Lie algebra endowed with a (relative) Rota-Baxter operator) was also studied \cite{lazarev}. The cohomological study of other cousins of Rota-Baxter operators on Lie algebras and associative algebras was also investigated in several papers. See \cite{sheng} and the references therein for more details. To unify the cohomologies and deformations of various operators mentioned above, recently Jiang, Sheng and Tang \cite{jiang-sheng-tang} considered quasi-twilled Lie algebras and two types of deformation maps on it. A deformation map of type I unifies Lie algebra homomorphisms, derivations, crossed homomorphisms and modified Rota-Baxter operators while a deformation map of type II unifies (relative) Rota-Baxter operators of any weight, twisted Rota-Baxter operators and Reynolds operators. Then they separately defined the cohomologies, controlling algebras and governing algebras for type I and type II deformation maps. However, in their approach, we observe that
\begin{itemize}
    \item[(i)] neither of these two types of deformation maps unifies embedding tensors on a Lie algebra,
     \item[(ii)] modified Rota-Baxter operators and twisted Rota-Baxter operators both cannot be unified by any particular type (type I or type II) deformation map,
    \item[(iii)] controlling algebras for both type I and type II deformation maps are (curved) $L_\infty$-algebras that can be obtained by similar constructions.
   \end{itemize}
Thus, a natural question that arises from the above discussions is this: 
\begin{center}
    {\em Can we have a single operator that unifies all the operators mentioned above including embedding tensors?}
\end{center}

\medskip

As the underlying algebraic structure of an embedding tensor is given by a Leibniz algebra \cite{kotov,sheng-embed}, one needs to pass to the context of Leibniz algebras. It is also important to remark that all the operators mentioned above (algebra homomorphisms \cite{mandal}, derivations \cite{das-leib-hrs}, crossed homomorphisms \cite{li-wang}, Rota-Baxter operators of any weight \cite{tang-sheng} \cite{tang-sheng-zhou} \cite{das-leib} \cite{das-leib-pub}, modified Rota-Baxter operators \cite{saha}, twisted Rota-Baxter operators \cite{das-guo} and Reynolds operators \cite{das-guo}) are extensively studied on Leibniz algebras in the last few years. They are defined in the same way as in the case of Lie algebras. To unify all these operators on a Leibniz algebra by a single map, one needs to consider a bigger object than a quasi-twilled Leibniz algebra. Namely, we will consider a proto-twilled Leibniz algebra \cite{tang-sheng} and then define deformation maps in a proto-twilled Leibniz algebra. Recall that a proto-twilled Leibniz algebra is a Leibniz algebra $(\mathcal{G}, [~,~]_\mathcal{G})$ whose underlying vector space has a direct sum decomposition $\mathcal{G}= \mathfrak{g} \oplus \mathfrak{h}$ into two subspaces (a quasi-twilled Leibniz algebra is a particular case in which $\mathfrak{h} \subset \mathcal{G}$ is a Leibniz subalgebra). 
A linear map $ r : \mathfrak{h} \rightarrow \mathfrak{g}$ is a {\em deformation map} in the proto-twilled Leibniz algebra if its graph $\mathrm{Gr} (r) \subset \mathfrak{g} \oplus \mathfrak{h}$ is a subalgebra of the Leibniz algebra $( \mathcal{G} = \mathfrak{g} \oplus \mathfrak{h} , [~,~]_\mathcal{G}  )$. A deformation map in a proto-twilled Leibniz algebra unifies all the operators mentioned above on Leibniz algebras and also embedding tensors on Lie algebras. We first show that a deformation map $r: \mathfrak{h} \rightarrow \mathfrak{g}$ induces a Leibniz algebra structure on the vector space $\mathfrak{h}$ (we denote this Leibniz algebra by $\mathfrak{h}_r$). Moreover, there is a suitable representation of this Leibniz algebra $\mathfrak{h}_r$ on the vector space $\mathfrak{g}$. The Loday-Pirashvili cohomology of the Leibniz algebra $\mathfrak{h}_r$ with coefficients in the above representation on $\mathfrak{g}$ is defined to be the cohomology of the deformation map $r$. This cohomology unifies the existing cohomologies of Leibniz algebra homomorphisms \cite{mandal}, derivations \cite{das-leib-hrs}, crossed homomorphisms \cite{li-wang}, (relative) Rota-Baxter operators of any weight \cite{tang-sheng-zhou} \cite{das-leib-pub}, modified Rota-Baxter operators \cite{saha}, twisted Rota-Baxter operators \cite{das-guo} and Reynolds operators \cite{das-guo} on Leibniz algebras and also embedding tensors on Lie algebras \cite{sheng-embed}. Next, given a proto-twilled Leibniz algebra $( \mathcal{G} = \mathfrak{g} \oplus \mathfrak{h} , [~,~]_\mathcal{G}  )$, we construct a curved $L_\infty$-algebra whose Maurer-Cartan elements are precisely deformation maps. This curved $L_\infty$-algebra is called the {\em controlling algebra} for deformation maps. In particular, we get the Maurer-Cartan characterizations of modified Rota-Baxter operators, twisted Rota-Baxter operators and Reynolds operators on Leibniz algebras. Next, twisting the controlling algebra by a Maurer-Cartan element (i.e. a deformation map $r$), one obtains the governing $L_\infty$-algebra that controls the linear deformations of the operator $r$ keeping the underlying proto-twilled Leibniz algebra undeformed. 
It is important to remark that the cohomology (and hence the linear deformation theory) of a deformation map $r$ in a proto-twilled Lie algebra is in general different than the cohomology of the same map $r$ viewed as a deformation map in the proto-twilled Leibniz algebra (cf. Remark \ref{remark-fial}). Finally, we also construct the governing $L_\infty$-algebra that controls the simultaneous deformations of the underlying proto-twilled Leibniz algebra and a given deformation map $r$.

\medskip

The paper is organized as follows. In Section \ref{sec2}, we recall some necessary background on Leibniz algebras and curved $L_\infty$-algebras. In Section \ref{sec3}, we discuss proto-twilled Leibniz algebras and introduce deformation maps in a proto-twilled Leibniz algebra. We observe that a deformation map unifies various well-known operators on a Leibniz algebra and embedding tensors on a Lie algebra. In Section \ref{sec4}, we show that a deformation map induces a new Leibniz algebra structure and there is a suitable representation of this induced Leibniz algebra. Using this, we define the cohomology of a deformation map. In Section \ref{sec5}, we construct a curved $L_\infty$-algebra whose Maurer-Cartan elements are precisely deformation maps in a given proto-twilled Leibniz algebra. We also obtain the governing $L_\infty$-algebra for a fixed deformation map $r$ that controls the linear deformations of the operator $r$. Finally, given a proto-twilled Leibniz algebra and a deformation map $r$, in Section \ref{sec6} we construct the governing $L_\infty$-algebra that controls the simultaneous deformations of the underlying algebra and the operator $r$.

\section{Background on Leibniz algebras and curved $L_\infty$-algebras}\label{sec2}
Here we recall some necessary background on Leibniz algebras and curved $L_\infty$-algebras. Among others, we recall Balavoine's bracket and Voronov's construction of a curved $L_\infty$-algebra. Our main references are \cite{loday-pira,bala,getzler,lada-markl,voro}.

\begin{defn}
A {\bf Leibniz algebra} $(\mathfrak{g}, [~, ~]_\mathfrak{g})$ consists of a vector space $\mathfrak{g}$ equipped with a bilinear operation (called the {\em Leibniz bracket}) $[~,~]_\mathfrak{g} : \mathfrak{g} \times \mathfrak{g} \rightarrow \mathfrak{g}$ satisfying
\begin{align}\label{leib-iden}
    [x, [y, z]_\mathfrak{g}]_\mathfrak{g} = [[x,y]_\mathfrak{g}, z]_\mathfrak{g} + [y, [x, z]_\mathfrak{g}]_\mathfrak{g}, \text{ for all } x, y, z \in \mathfrak{g}.
\end{align}
\end{defn}

A Leibniz algebra defined above is also called a {\em left Leibniz algebra} as the identity (\ref{leib-iden}) is equivalent that all the left translations are derivations for the Leibniz bracket. By keeping this in mind, one may also define the right Leibniz algebras. In this paper, by a Leibniz algebra, we shall always mean a left Leibniz algebra. However, all the results of the present paper can be easily adapted for the right Leibniz algebras without much hard work.

Let $(\mathfrak{g}, [~, ~]_\mathfrak{g})$ be a Leibniz algebra. A {\bf representation} of $(\mathfrak{g}, [~, ~]_\mathfrak{g})$ is given by a triple $(V, \rho^L, \rho^R)$ in which $V$ is a vector space endowed with bilinear maps $\rho^L : \mathfrak{g} \times V \rightarrow V$ and $\rho^R: V \times \mathfrak{g} \rightarrow V$ (called left and right actions respectively) such that for all $x, y \in \mathfrak{g}$ and $v \in V$, the following identities hold:
\begin{align*}
    \rho^L (x, \rho^L (y, v) ) =~& \rho^L ( [x, y]_\mathfrak{g}, v) + \rho^L (y, \rho^L (x, v)),\\
    \rho^L (x, \rho^R (v, y)) =~& \rho^R ( \rho^L (x, v), y) + \rho^R (v, [x, y]_\mathfrak{g}), \\
    \rho^R (v, [x, y]_\mathfrak{g} ) =~& \rho^R (\rho^R (v, x), y) + \rho^L (x, \rho^R (v, y)).
\end{align*}

\begin{exam}\label{exam-ad-coad} Let $(\mathfrak{g}, [~, ~]_\mathfrak{g})$ be any Leibniz algebra.
\begin{itemize}
    \item[(i)] ({\em Adjoint representation}) Then the triple $(\mathfrak{g}, ad^L, ad^R)$ is a representation, where $ad^L, ad^R : \mathfrak{g} \times \mathfrak{g} \rightarrow \mathfrak{g}$ are both given by the Leibniz bracket $[~,~]_\mathfrak{g}$.
    \item[(ii)] ({\em Coadjoint representation}) The triple $(\mathfrak{g}^*, coad^L, coad^R)$ is also a representation, where the maps $coad^L : \mathfrak{g} \times \mathfrak{g}^* \rightarrow \mathfrak{g}^*$ and $coad^R : \mathfrak{g}^* \times \mathfrak{g} \rightarrow \mathfrak{g}^*$ are respectively given by
    \begin{align*}
        coad^L (x, \alpha ) (y) = - \alpha ([x, y]_\mathfrak{g}) ~~~ \text{ and } ~~~ coad^R (\alpha, x) (y) = \alpha ([x, y]_\mathfrak{g} + [y, x]_\mathfrak{g}), \text{ for } x, y \in \mathfrak{g} \text{ and } \alpha \in \mathfrak{g}^*.
    \end{align*}
\end{itemize} 
\end{exam}

We will now recall the cohomology of a Leibniz algebra with coefficients in a representation. Let $(\mathfrak{g}, [~,~]_\mathfrak{g})$ be a Leibniz algebra and $(V, \rho^L, \rho^R)$ be a fixed representation of it. For each $n \geq 0$, define the space of $n$-cochains by $ C^n_\mathrm{Leib} (\mathfrak{g}, V) := \mathrm{Hom} (\mathfrak{g}^{\otimes n}, V)$ and the {coboundary map} $\delta_\mathrm{Leib} : C^n_\mathrm{Leib} (\mathfrak{g}, V) \rightarrow C^{n+1}_\mathrm{Leib} (\mathfrak{g}, V)$ given by
\begin{align*}
   ( \delta_\mathrm{Leib} f) (x_1, \ldots, x_{n+1}) =~& \sum_{i=1}^n (-1)^{i+1} ~ \! \rho^L ( x_i ,  f (x_1, \ldots, \widehat{ x_i} , \ldots, x_{n+1}) ) + (-1)^{n+1} ~ \! \rho^R (f (x_1, \ldots, x_n), x_{n+1} ) \\
   &+  \sum_{1 \leq i < j \leq n+1} (-1)^i ~ \! f (x_1, \ldots, \widehat{x_i}, \ldots, x_{j-1}, [x_i, x_j]_\mathfrak{g}, x_{j+1}, \ldots, x_{n+1}),
\end{align*}
for $f \in  C^n_\mathrm{Leib} (\mathfrak{g}, V)$ and $x_1, \ldots, x_{n+1} \in \mathfrak{g}$. Then  $\{ C^\bullet_\mathrm{Hom} (\mathfrak{g}, V), \delta_\mathrm{Hom} \}$ is a cochain complex and corresponding cohomology groups are called the cohomology groups of the Leibniz algebra $(\mathfrak{g}, [~,~]_\mathfrak{g})$ with coefficients in the representation $(V, \rho^L, \rho^R)$.

\medskip

\noindent {\em Balavoine bracket.} Let $\mathfrak{g}$ be any vector space. Given any $f \in \mathrm{Hom}(\mathfrak{g}^{\otimes m}, \mathfrak{g})$ and $g \in \mathrm{Hom}(\mathfrak{g}^{\otimes n}, \mathfrak{g})$, their {\bf Balavoine bracket} $\llbracket f, g \rrbracket_\mathsf{B} \in \mathrm{Hom}(\mathfrak{g}^{\otimes m+n-1}, \mathfrak{g})$ is defined by
\begin{align*}
    \llbracket f, g \rrbracket_\mathsf{B} := f \diamond g - (-1)^{(m-1) (n-1)} ~ \! g \diamond f,
\end{align*}
where
\begin{align*}
    (f \diamond g) (x_1, \ldots, x_{m+n-1} ) = \sum_{i=1}^m ~& (-1)^{(i-1) (n-1)} \sum_{\sigma \in \mathbb{S}_{(i-1, n-1)} } (-1)^\sigma \\
    & f \big(  x_{\sigma (1)}, \ldots, x_{\sigma (i-1)}, g \big( x_{\sigma (i)}, \ldots, x_{\sigma (i+n-2)}, x_{i+n-1}   \big), x_{i+n}, \ldots, x_{m+n-1}   \big),
\end{align*}
for $x_1, \ldots, x_{m+n-1} \in \mathfrak{g}$. Then the following result is well-known \cite{bala}.

\begin{thm}
\begin{itemize}
    \item[(i)]  Let $\mathfrak{g}$ be a vector space. Then the graded vector space $ \oplus_{n=0}^\infty \mathrm{Hom} (\mathfrak{g}^{\otimes n +1}, \mathfrak{g})$ endowed with the Balavoine bracket $\llbracket ~,~ \rrbracket_\mathsf{B}$ is a graded Lie algebra, called the Balavoinve's graded Lie algebra associated to the vector space $\mathfrak{g}$.
    \item[(ii)]  There is a one-to-one correspondence between Leibniz algebra structures on the vector space $\mathfrak{g}$ and Maurer-Cartan elements of the Balavoine's graded Lie algebra $\big(   \oplus_{n=0}^\infty \mathrm{Hom} (\mathfrak{g}^{\otimes n +1}, \mathfrak{g}) , \llbracket ~, ~ \rrbracket_\mathsf{B} \big)$.
\end{itemize}
\end{thm}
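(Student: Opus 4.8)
The plan is to deduce both parts from the single observation that the diamond operation $\diamond$ is a \emph{graded (right) pre-Lie product} and that the Balavoine bracket is exactly its graded commutator. Throughout I would use the shifted grading in which $f \in \mathrm{Hom}(\mathfrak{g}^{\otimes m}, \mathfrak{g})$ is assigned degree $|f| = m - 1$, so that the sign $(-1)^{(m-1)(n-1)}$ appearing in the definition of $\llbracket ~,~ \rrbracket_\mathsf{B}$ is precisely the Koszul sign $(-1)^{|f||g|}$. With this convention $\llbracket f, g \rrbracket_\mathsf{B} = f \diamond g - (-1)^{|f||g|}\, g \diamond f$ is manifestly the graded commutator of $\diamond$, so the graded antisymmetry of $\llbracket ~,~ \rrbracket_\mathsf{B}$ is immediate.

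For part (i), the substantive point is the graded Jacobi identity, which I would obtain from the general fact that the graded commutator of any graded right pre-Lie algebra is a graded Lie algebra. Thus the theorem reduces to verifying the graded right pre-Lie relation for $\diamond$, namely that the associator
\[
(f \diamond g) \diamond h - f \diamond (g \diamond h)
\]
is graded symmetric in $g$ and $h$ up to the Koszul sign $(-1)^{|g||h|}$. First I would expand both $f \diamond (g \diamond h)$ and $(f \diamond g) \diamond h$ as sums over insertion positions and $(p,q)$-shuffles. The resulting terms split into two types: the \emph{nested} terms, in which $h$ is inserted into a slot of $g$ and the composite then inserted into $f$, and the \emph{disjoint} terms, in which $g$ and $h$ are inserted into two distinct slots of $f$. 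The nested terms occur identically in $f \diamond (g \diamond h)$ and in $(f \diamond g) \diamond h$, so they cancel in the associator, leaving only the disjoint terms, which are visibly symmetric in the pair $(g,h)$ once the shuffle signs are tracked; this is exactly the pre-Lie relation.

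The hard part will be the sign and shuffle bookkeeping in this expansion: one must check that reindexing the shuffles needed to match an insertion of $g$ at position $i$ and $h$ at position $j$ against the same configuration with the roles of $g$ and $h$ exchanged produces precisely the Koszul sign $(-1)^{|g||h|}$, and that the nested contributions cancel slot by slot rather than merely in aggregate. This is a routine but delicate computation of the same flavour as the proofs that the Gerstenhaber circle product on the Hochschild complex and the Nijenhuis--Richardson product on the Chevalley--Eilenberg complex are pre-Lie; I would organize it by fixing the combined shuffle and summing over the ways it factors.

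For part (ii), I would argue by direct computation. A candidate Leibniz bracket is a bilinear map $\pi \in \mathrm{Hom}(\mathfrak{g}^{\otimes 2}, \mathfrak{g})$, an element of degree $1$, hence a Maurer--Cartan element in the relevant degree. Since $|\pi| = 1$, the defining sign gives $\llbracket \pi, \pi \rrbracket_\mathsf{B} = 2\, \pi \diamond \pi$, and expanding $\pi \diamond \pi$ on three arguments $x_1, x_2, x_3$ yields
\[
(\pi \diamond \pi)(x_1, x_2, x_3) = \pi(\pi(x_1, x_2), x_3) - \pi(x_1, \pi(x_2, x_3)) + \pi(x_2, \pi(x_1, x_3)).
\]
Therefore $\llbracket \pi, \pi \rrbracket_\mathsf{B} = 0$ holds if and only if $\pi$ satisfies the identity (\ref{leib-iden}), i.e. $\pi$ is a Leibniz bracket. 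This establishes the claimed one-to-one correspondence and completes the proof.
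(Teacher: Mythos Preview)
Your approach is correct and is the standard argument for this result. Note, however, that the paper does not actually prove this theorem: it is introduced with ``Then the following result is well-known \cite{bala}'' and simply cited from Balavoine's original paper, so there is no proof in the paper to compare against. Your strategy---showing that $\diamond$ is a graded right pre-Lie product and invoking the general fact that the graded commutator of a pre-Lie product satisfies the graded Jacobi identity---is precisely the method used in the cited reference and its analogues for the Gerstenhaber and Nijenhuis--Richardson brackets. The computation in part (ii) is accurate: expanding $(\pi\diamond\pi)(x_1,x_2,x_3)$ from the definition of $\diamond$ with $m=n=2$ gives exactly the three terms you wrote, and their vanishing is the Leibniz identity (\ref{leib-iden}). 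The only caveat is that your sketch of the pre-Lie verification (nested terms cancel, disjoint terms are symmetric in $g,h$) is correct in outline but, as you acknowledge, the shuffle-sign bookkeeping is genuinely tedious; if you write this out in full you should fix the insertion positions $i<j$ in $f$ and verify that swapping the roles of $g$ and $h$ (which forces a reindexing of the combined shuffle) contributes exactly $(-1)^{|g||h|}$.
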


\medskip

A {\bf curved $L_\infty$-algebra} \cite{getzler,lada-markl} is a pair $(\mathcal{L}, \{ l_k \}_{k=0}^\infty)$ consisting of a graded vector space $\mathcal{L} = \oplus_{n \in \mathbb{Z}} \mathcal{L}_n$ equipped with a collection $\{ l_k : \mathcal{L}^{\otimes k} \rightarrow \mathcal{L} \}_{k=0}^\infty$ of  degree $1$ graded symmetric linear maps satisfying the following set of identities:
\begin{align*}
    \text{for each } N \geq 0, \qquad \displaystyle\sum^{N}_{i=0}~\sum_{\sigma\in \mathbb{S}_{(i,N-i)}} \epsilon(\sigma)~ \! l_{N-i+1}\big(l_i(x_{\sigma(1)},\ldots,x_{\sigma(i)}),x_{\sigma(i+1)},\ldots,x_{\sigma(N)}\big) = 0, 
\end{align*}
for homogeneous elements $x_1, \ldots, x_N \in \mathcal{L}$. Here $ \epsilon (\sigma) = \epsilon (\sigma; x_1, \ldots, x_N)$ is the Koszul sign that appears in the graded context.

In the above definition, $l_0$ is simply a degree $1$ homogeneous element of $\mathcal{L}$ (i.e. $l_0 \in \mathcal{L}_1$). When $l_0 = 0$, a curved $L_\infty$-algebra becomes an $L_\infty$-algebra. All (curved) $L_\infty$-algebras considered in this paper are assumed to have a complete descending filtration \cite{getzler,jonas} so that all infinite sums under consideration are convergent.

Let $(\mathcal{L}, \{ l_k \}_{k=0}^\infty)$ be a (curved) $L_\infty$-algebra. An element $\alpha \in \mathcal{L}_0$ is said to be a {\bf Maurer-Cartan element} of the (curved) $L_\infty$-algebra if $\alpha$ satisfies
\begin{align*}
    l_0 + \displaystyle\sum_{k=1}^{\infty} \dfrac{1}{k!} ~ \! l_k(\alpha,\ldots,\alpha) = 0.
\end{align*}
A Maurer-Cartan element of a (curved) $L_\infty$-algebra can be used to construct a new $L_\infty$-algebra by twisting. More precisely, we have the following result \cite{getzler}.

\begin{thm}\label{mc-twist-thm}
    Let $(\mathcal{L}, \{ l_k \}_{k=0}^\infty)$ be a (curved) $L_\infty$-algebra and $\alpha$ be a Maurer-Cartan element of it. Then $(\mathcal{L}, \{ l_k^\alpha \}_{k=1}^\infty)$ is an $L_\infty$-algebra, where
    \begin{align*}
         {l^\alpha_k}(x_1,\ldots,x_k) :=  \displaystyle\sum_{n=0}^{\infty} \dfrac{1}{n!} ~ \! l_{n+k}(\alpha,\ldots,\alpha,x_1,\ldots,x_k), \text{ for } k \geq 1 \text{ and } x_1,\ldots, x_k\in \mathcal{L}.
    \end{align*}
\end{thm}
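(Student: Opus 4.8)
The plan is to encode the entire (curved) $L_\infty$-structure on $\mathcal{L}$ as a single square-zero coderivation on a graded symmetric coalgebra and to realize the twisting operation as conjugation by a coalgebra automorphism, so that the square-zero property is preserved for free. Since the paper's convention already makes every $l_k$ a graded symmetric map of degree $1$, no suspension is needed and I work directly on the (completed) cofree cocommutative coalgebra $\overline{S}^c(\mathcal{L}) = \bigoplus_{k \geq 0} \mathcal{L}^{\odot k}$. It is standard \cite{lada-markl, getzler} that the collection $\{ l_k \}_{k \geq 0}$ is equivalent to a degree-$1$ coderivation $D = \sum_{k \geq 0} D_k$ of $\overline{S}^c(\mathcal{L})$, where $D_k$ is cogenerated by $l_k : \mathcal{L}^{\odot k} \to \mathcal{L}$ and $D_0$ encodes the curvature $l_0 \in \mathcal{L}_1$. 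Under this dictionary the full list of generalized Jacobi identities (for all $N \geq 0$, including the curved ones involving $l_0$) is equivalent to the single equation $D^2 = 0$, while the Maurer--Cartan equation is equivalent to the statement that the corestriction $\mathrm{pr} \circ D : \overline{S}^c(\mathcal{L}) \to \mathcal{L}$ annihilates the group-like element $e^\alpha := \sum_{n \geq 0} \frac{1}{n!}\, \alpha^{\odot n}$, which is well defined and of degree $0$ because $\alpha \in \mathcal{L}_0$ and the filtration is complete.

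Next I would introduce the twist. Translation by the group-like element $e^\alpha$ defines a coalgebra automorphism $\Phi_\alpha$ of $\overline{S}^c(\mathcal{L})$ with inverse $\Phi_{-\alpha}$, and I set $D^\alpha := \Phi_{-\alpha} \circ D \circ \Phi_\alpha$. The central computation is to expand the corestriction components of $D^\alpha$ and verify that they are exactly the maps prescribed in the statement: the $k$-th component sends $x_1 \odot \cdots \odot x_k$ to $\sum_{n \geq 0} \frac{1}{n!}\, l_{n+k}(\alpha, \dots, \alpha, x_1, \dots, x_k)$. For $k \geq 1$ this recovers $l_k^\alpha$, and for $k = 0$ it recovers precisely the left-hand side of the Maurer--Cartan equation; hence the curvature of the twisted structure is $l_0^\alpha = l_0 + \sum_{k \geq 1} \frac{1}{k!}\, l_k(\alpha, \dots, \alpha) = 0$, so that $(\mathcal{L}, \{ l_k^\alpha \}_{k \geq 1})$ is a genuine (uncurved) $L_\infty$-algebra rather than merely a curved one.

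With these identifications the theorem follows formally: since $\Phi_\alpha$ is a coalgebra automorphism, $D^\alpha$ is again a degree-$1$ coderivation, and $(D^\alpha)^2 = \Phi_{-\alpha} D^2 \Phi_\alpha = 0$; transporting this single equation back through the dictionary reproduces the entire system of generalized Jacobi identities for the brackets $\{ l_k^\alpha \}$. For a reader who prefers to avoid the coalgebra machinery, the same conclusion can be reached combinatorially: substituting the definition of $l_k^\alpha$ into the $N$-th twisted Jacobi identity produces a double sum indexed by the numbers $p, q$ of inserted copies of $\alpha$ in the outer and inner brackets; grouping the terms by the total number $m = p + q$ of inserted $\alpha$'s, a shuffle count shows that the coefficient of each fixed $m$ equals $\frac{1}{m!}$ times the $(N+m)$-th original Jacobi identity evaluated on the inputs $(\alpha, \dots, \alpha, x_1, \dots, x_N)$, which vanishes by hypothesis.

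The main obstacle is exactly the bookkeeping concentrated in the middle step: checking that the components of $\Phi_{-\alpha} D \Phi_\alpha$ reassemble into the multinomially weighted sums $l_k^\alpha$, or, in the combinatorial route, establishing the shuffle identity that reorganizes the $(p,q)$-indexed double sum into $\sum_m \frac{1}{m!}\,(\text{original identity with } m \text{ inserted copies of } \alpha)$. Two points need care: the Koszul signs $\epsilon(\sigma)$ generated when the degree-$0$ element $\alpha$ is shuffled past the homogeneous inputs $x_j$ (these behave well precisely because $\alpha$ is even), and the matching of the factors $\frac{1}{p!\, q!}$ against $\frac{1}{m!}$ times the number $\binom{m}{p}$ of ways of distributing the $m$ inserted copies of $\alpha$ between the inner and outer brackets, i.e. the elementary identity $\frac{1}{p!\, q!} = \frac{1}{m!}\binom{m}{p}$. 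Once this lemma is verified the result is immediate, which is why the statement is attributed to Getzler in the excerpt.
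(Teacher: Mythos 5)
The paper does not prove this theorem at all: it is stated in the background section as a recalled result and attributed to Getzler, so there is no in-paper argument to compare against. Your proposal is a correct and essentially standard proof of the cited result. The coderivation route is sound: encoding the curved structure as a square-zero degree-$1$ coderivation $D$ on the completed counital symmetric coalgebra (you rightly keep the $\mathcal{L}^{\odot 0}$ summand to host the curvature component $D_0$), identifying the Maurer--Cartan equation with $\mathrm{pr}\circ D(e^\alpha)=0$, and realizing the twist as conjugation by the translation automorphism $\Phi_\alpha$ gives $(D^\alpha)^2=\Phi_{-\alpha}D^2\Phi_\alpha=0$ for free; the observation that the $k=0$ corestriction component of $D^\alpha$ is exactly the Maurer--Cartan expression, hence vanishes, is precisely why the twisted object is an honest (uncurved) $L_\infty$-algebra as the statement asserts. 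Your fallback combinatorial argument is also correct, and you have flagged the only two points requiring care: the Koszul signs (harmless since $\alpha$ has degree $0$) and the coefficient match $\frac{1}{p!\,q!}=\frac{1}{m!}\binom{m}{p}$ with $q=m-p$. Two cosmetic remarks: the element $e^\alpha$ lives in the product completion rather than the direct sum you wrote, which is where the paper's standing assumption of a complete descending filtration is actually used; and the notation $\overline{S}^c$ conventionally denotes the reduced (counit-free) coalgebra, whereas you need the unreduced one. Neither affects the validity of the argument.
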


A useful construction of a (curved) $L_\infty$-algebra is given by Voronov \cite{voro}. First recall that a {\bf curved $V$-data} is a quadruple $(\mathfrak{B}, \mathfrak{a}, P, \Delta)$ consisting of a graded Lie algebra $(\mathfrak{B}, \llbracket ~, ~ \rrbracket)$, an abelian graded Lie subalgebra $\mathfrak{a} \subset \mathfrak{B}$, a projection map $P : \mathfrak{B} \rightarrow \mathfrak{a} \subset \mathfrak{B}$ whose kernel $\mathrm{ker}(P) \subset \mathfrak{B}$ is a graded Lie subalgebra and an element $\Delta \in \mathfrak{B}_1$ that satisfies $\llbracket \Delta, \Delta \rrbracket = 0$. A curved $V$-data  $(\mathfrak{B}, \mathfrak{a}, P, \Delta)$ is called a {\em $V$-data} if $\Delta \in \mathrm{ker}(P)_1$.

\begin{thm}\label{thm-voro}
    Let  $(\mathfrak{B}, \mathfrak{a}, P, \Delta)$ be a curved $V$-data. Then $(\mathfrak{a}, \{ l_k \}_{k=0}^\infty)$ is a curved $L_\infty$-algebra, where
    \begin{align*}
        l_0= P(\Delta) \quad \text{ and } \quad 
  l_k(a_1,\ldots,a_k) := P \llbracket \cdots \llbracket \llbracket \Delta,a_1 \rrbracket ,a_2 \rrbracket ,\ldots,a_k \rrbracket, \text{ for } k \geq 1 \text{ and } a_1,\ldots,a_k \in \mathfrak{a}.    
    \end{align*}
    More generally, if $\mathfrak{B}' \subset \mathfrak{B}$ is a graded Lie subalgebra  with $\llbracket \Delta, \mathfrak{B}' \rrbracket \subset \mathfrak{B}'$ then the pair $( s^{-1} \mathfrak{B}' \oplus \mathfrak{a} , \{ \widetilde{l}_k \}_{k=0}^\infty)$ is a curved $L_\infty$-algebra, where the structure maps are given by (up to permutations of the inputs)
    \begin{align*}
         \widetilde{l}_0=~& (0, P(\Delta)),\\
\widetilde{l}_1( (s^{-1} x, 0) ) =~& (s^{-1} \llbracket \Delta,x \rrbracket , P(x)),\\ 
\widetilde{l}_2( (s^{-1} x, 0), (s^{-1}y, 0) ) =~& ((-1)^{\lvert x \rvert} ~ \! s^{-1} \llbracket x,y \rrbracket,0 ),\\
\widetilde{l}_k\big( (s^{-1} x,0), (0,a_1), \ldots, (0, a_{k-1} )\big) =~& (0, P~ \! \llbracket \cdots \llbracket \llbracket x,a_1 \rrbracket,a_2\rrbracket,\ldots,a_{k-1} \rrbracket),~~k\geq 2,\\
\widetilde{l}_k\big( (0,a_1),\ldots, (0,a_{k})\big) =~& (0, P~ \! \llbracket \cdots \llbracket \llbracket \Delta,a_1 \rrbracket,a_2\rrbracket,\ldots,a_{k} \rrbracket),~~ k\geq 1,
    \end{align*}
    for homogeneous elements $x, y \in \mathfrak{B}'$ (considered as elements $s^{-1} x, s^{-1} y \in s^{-1} \mathfrak{B}'$ by a shifting) and $a_1, \ldots, a_k \in \mathfrak{a}$.
\end{thm}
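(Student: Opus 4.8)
The plan is to read the statement as Voronov's higher derived bracket construction and to verify the defining curved $L_\infty$-identities directly from the four structural features of a curved $V$-data: $P^2 = P$, the image $\mathfrak{a} = \mathrm{im}(P)$ is abelian, $\mathrm{ker}(P)$ is a graded Lie subalgebra, and $\llbracket \Delta, \Delta \rrbracket = 0$. Throughout I would write $Q := \mathrm{Id} - P$ for the complementary projection onto $\mathrm{ker}(P)$.

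For the first assertion I would fix $l_0 = P(\Delta)$ and $l_k(a_1, \ldots, a_k) = P \llbracket \cdots \llbracket \Delta, a_1 \rrbracket, \ldots, a_k \rrbracket$ and check, for each $N \geq 0$, the relation $\sum_{i} \sum_{\sigma} \epsilon(\sigma) \, l_{N-i+1}(l_i(\cdots), \cdots) = 0$. The mechanism I would rely on is Voronov's \emph{defect} computation: expanding a composite $l_{N-i+1}(l_i(\cdots), \cdots)$ produces an inner projection $P = \mathrm{Id} - Q$ sitting between two brackets, and one splits each such term accordingly. The $\mathrm{Id}$-parts recombine, by repeated use of the graded Jacobi identity in $\mathfrak{B}$, into the single iterated derived bracket $P \llbracket \cdots \llbracket \tfrac{1}{2}\llbracket \Delta, \Delta \rrbracket, a_1 \rrbracket, \ldots, a_N \rrbracket$, while the $Q$-parts are handled by the two projection hypotheses: $\llbracket Q u, Q v \rrbracket \in \mathrm{ker}(P)$ is annihilated by the outer $P$, and brackets internal to the abelian $\mathfrak{a}$ vanish, so these terms cancel in pairs. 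Hence the whole left-hand side of the $N$-th relation equals the $N$-th derived bracket of $\tfrac{1}{2}\llbracket \Delta, \Delta \rrbracket$, which is zero precisely because $\llbracket \Delta, \Delta \rrbracket = 0$. An efficient way to organize the sign bookkeeping, which is the only delicate point here, is to pass to the generating series $\sum_{k \geq 0} \tfrac{1}{k!} l_k(a, \ldots, a) = P(e^{\mathrm{ad}_a} \Delta)$ for $a \in \mathfrak{a}_0$, where $\mathrm{ad}_a(u) = \llbracket u, a \rrbracket$; since $e^{\mathrm{ad}_a}$ is an automorphism it preserves $\llbracket \Delta, \Delta \rrbracket = 0$, and the full set of relations is recovered by polarization.

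For the second, more general assertion my plan is to deduce it from the first rather than to re-run the defect computation, by applying the basic construction to an enlarged curved $V$-data. I would enlarge $\mathfrak{B}$ to a graded Lie algebra $\widehat{\mathfrak{B}}$ containing both $\mathfrak{B}$ and a shifted square-zero copy $s^{-1}\mathfrak{B}$ of its adjoint module, realized inside the graded Lie algebra of derivations of $\mathfrak{B} \ltimes s^{-1}\mathfrak{B}$ (together with the inner part) so that the degree $+1$ desuspension operator $\mathbf{d}$, characterized by $\mathbf{d}(s^{-1}u) = u$, becomes an honest outer derivation. I would then set $\widehat{\Delta} := \Delta + \mathbf{d}$, take $\widehat{\mathfrak{a}} := s^{-1}\mathfrak{B}' \oplus \mathfrak{a}$, and let $\widehat{P}$ be a projection onto $\widehat{\mathfrak{a}}$ extending $P$ on the $\mathfrak{B}$-summand. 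The role of the hypothesis $\llbracket \Delta, \mathfrak{B}' \rrbracket \subset \mathfrak{B}'$ is exactly to make $\widehat{\Delta}$ preserve the shifted summand $s^{-1}\mathfrak{B}'$, so that the derived brackets of $\widehat{\Delta}$ land in $\widehat{\mathfrak{a}}$. One then matches the output term by term: the extra summand $\mathbf{d}$ is what converts an input $s^{-1}x$ into $x \in \mathfrak{B}$, which produces the component $P(x) \in \mathfrak{a}$ of $\widetilde{l}_1$ and, after a further adjoint action on a second shifted input, the shifted bracket $(-1)^{\lvert x \rvert} s^{-1} \llbracket x, y \rrbracket$ of $\widetilde{l}_2$; the mixed maps $\widetilde{l}_k((s^{-1}x, 0), (0, a_1), \ldots)$ arise as derived brackets of $x$ rather than $\Delta$ against the $a_i$, and the purely $\mathfrak{a}$-valued maps $\widetilde{l}_k((0, a_1), \ldots)$ are exactly the $l_k$ of the first assertion. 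Applying the basic construction of the first assertion to $(\widehat{\mathfrak{B}}, \widehat{\mathfrak{a}}, \widehat{P}, \widehat{\Delta})$ then yields the curved $L_\infty$-structure on $\widehat{\mathfrak{a}} = s^{-1}\mathfrak{B}' \oplus \mathfrak{a}$ with the listed maps.

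The main obstacle I anticipate is entirely in the second part: one must verify that $(\widehat{\mathfrak{B}}, \widehat{\mathfrak{a}}, \widehat{P}, \widehat{\Delta})$ is genuinely a curved $V$-data, i.e.\ that $\widehat{\mathfrak{a}}$ is abelian in $\widehat{\mathfrak{B}}$, that $\mathrm{ker}(\widehat{P})$ is a graded Lie subalgebra, and above all that $\llbracket \widehat{\Delta}, \widehat{\Delta} \rrbracket = 0$, which unravels into $\llbracket \Delta, \Delta \rrbracket = 0$, $\mathbf{d}^2 = 0$, and the compatibility $\llbracket \mathbf{d}, \mathrm{ad}_\Delta \rrbracket = 0$. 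Setting up the ambient $\widehat{\mathfrak{B}}$ so that $\mathbf{d}$ is a legitimate derivation, choosing the complement of $s^{-1}\mathfrak{B}'$ so that $\mathrm{ker}(\widehat{P})$ stays a subalgebra, and then checking that every Koszul sign in the resulting derived brackets agrees with the signs recorded in the statement is where essentially all of the labour lies; once this identification is secured, the result is immediate from the first assertion.
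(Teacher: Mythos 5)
The paper itself offers no proof of this statement: Theorem \ref{thm-voro} is recalled as background and attributed to Voronov \cite{voro}, so your attempt can only be compared with the standard arguments in the cited literature.

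Your treatment of the first assertion is essentially correct and is exactly Voronov's argument: split each inner projection $P=\mathrm{Id}-Q$ in the composites $l_{N-i+1}(l_i(\cdots),\cdots)$, recombine the $\mathrm{Id}$-parts via the graded Jacobi identity into the iterated derived bracket of $\tfrac{1}{2}\llbracket\Delta,\Delta\rrbracket$, and kill the $Q$-parts using that $\ker P$ is a subalgebra and $\mathfrak{a}$ is abelian; this goes through verbatim in the curved case $P(\Delta)\neq 0$, with the $l_0$-terms absorbed into the same bookkeeping.

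The second assertion is where there is a genuine gap. The quadruple $(\widehat{\mathfrak{B}},\widehat{\mathfrak{a}},\widehat{P},\widehat{\Delta})$ you propose is not a curved $V$-data for the ambient algebra you describe, namely $\mathfrak{B}\ltimes s^{-1}\mathfrak{B}$ (with $s^{-1}\mathfrak{B}$ the shifted adjoint module as an abelian ideal) extended by the desuspension derivation $\mathbf{d}$. The subspace $\widehat{\mathfrak{a}}=s^{-1}\mathfrak{B}'\oplus\mathfrak{a}$ is \emph{not} abelian there, because for $a\in\mathfrak{a}$ and $x\in\mathfrak{B}'$ one has $\llbracket a,s^{-1}x\rrbracket=\pm\, s^{-1}\llbracket a,x\rrbracket$, which is nonzero in general ($\mathfrak{a}$ is abelian, not central); and when $\mathfrak{B}'\neq\mathfrak{B}$ the kernel of $\widehat{P}$, which must contain $\mathbf{d}$ together with a shifted complement $s^{-1}W$ of $s^{-1}\mathfrak{B}'$, fails to be a subalgebra since $\llbracket\mathbf{d},s^{-1}w\rrbracket=\pm w$ need not lie in $\ker P$. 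More decisively, no choice of $\widehat{P}$ repairs this, because the derived brackets of $\widehat{\Delta}=\Delta+\mathbf{d}$ simply do not reproduce the stated structure maps: for instance $\llbracket\llbracket\llbracket\widehat{\Delta},s^{-1}x\rrbracket,s^{-1}y\rrbracket,a\rrbracket=\pm\, s^{-1}\llbracket\llbracket x,y\rrbracket,a\rrbracket$ is generally nonzero, whereas the theorem asserts that every $\widetilde{l}_k$ with two shifted inputs and at least one $\mathfrak{a}$-input vanishes, and the mixed brackets $\widetilde{l}_k\big((s^{-1}x,0),(0,a_1),\ldots\big)$ acquire an unwanted $s^{-1}\mathfrak{B}$-valued component $\pm\, s^{-1}\llbracket\cdots\llbracket\llbracket\Delta,x\rrbracket,a_1\rrbracket\cdots\rrbracket$ absent from the statement. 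You flagged the verification of the $V$-data axioms as the place where ``the labour lies'', but the point is that they \emph{fail} for this enlargement, so the second part is not a formal corollary of the first in the way you describe. The proof in \cite{voro} (and its curved variants) instead verifies the higher Jacobi identities for $\{\widetilde{l}_k\}$ directly, using $\llbracket\Delta,\Delta\rrbracket=0$, the invariance $\llbracket\Delta,\mathfrak{B}'\rrbracket\subset\mathfrak{B}'$, the graded Jacobi identity, and the first assertion for the purely $\mathfrak{a}$-valued components.
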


\section{Proto-twilled Leibniz algebras and deformation maps}\label{sec3}
In this section, we first consider proto-twilled Leibniz algebras \cite{tang-sheng} and introduce deformation maps in a proto-twilled Leibniz algebra. We observe that deformation maps in a proto-twilled Leibniz algebra unify Leibniz algebra homomorphisms, derivations, crossed homomorphisms, (relative) Rota-Baxter operators of any weight, modified Rota-Baxter operators, twisted Rota-Baxter operators, Reynolds operators on Leibniz algebras and also embedding tensors on Lie algebras.

\begin{defn}
    A {\bf proto-twilled Leibniz algebra} is a Leibniz algebra $(\mathcal{G}, [~,~]_\mathcal{G})$ whose underlying vector space has a direct sum decomposition $\mathcal{G} = \mathfrak{g} \oplus \mathfrak{h}$ into two subspaces.
\end{defn}

It is said to be a {\em quasi-twilled Leibniz algebra} if $\mathfrak{h} \subset \mathcal{G}$ is a Leibniz subalgebra and a {\em twilled Leibniz algebra} (also called a {\em matched pair of Leibniz algebras}) if $\mathfrak{g} \subset \mathcal{G}$ and $\mathfrak{h} \subset \mathcal{G}$ are both Leibniz subalgebras. A (proto-, quasi-)twilled Leibniz algebra as above is often denoted by $( \mathcal{G} = \mathfrak{g} \oplus \mathfrak{h}, [~,~]_\mathcal{G})$ if the notations are understood.

\begin{exam}\label{dir-product}
    Let $(\mathfrak{g}, [~,~]_\mathfrak{g})$ and $(\mathfrak{h}, [~,~]_\mathfrak{h})$ be two Leibniz algebras. Then the direct product $(\mathfrak{g} \oplus \mathfrak{h}, [~,~]_\mathrm{dir})$ is a proto-twilled Leibniz algebra, where
    \begin{align*}
        [ (x, u), (y, v)]_\mathrm{dir} = ( [x, y]_\mathfrak{g}, [u, v]_\mathfrak{h}), \text{ for } (x,u), (y, v) \in \mathfrak{g} \oplus \mathfrak{h}.
    \end{align*}
\end{exam}

\begin{exam}\label{semi-proto}
    Let $(\mathfrak{g}, [~,~]_\mathfrak{g})$ be a Leibniz algebra and $(V, \rho^L, \rho^R)$ be a representation of it. Then the semidirect product $(\mathfrak{g} \oplus V, [~, ~]_\ltimes)$ is a proto-twilled Leibniz algebra, where
    \begin{align*}
        [ (x, u), (y, v)]_\ltimes = ( [x, y]_\mathfrak{g} ~ \! , ~ \! \rho^L (x, v) + \rho^R (u, y)), \text{ for } (x, u), (y, v) \in \mathfrak{g} \oplus V.
    \end{align*}
    Symmetrically, $(V \oplus \mathfrak{g}, [~,~]_\rtimes)$ is also a proto-twilled Leibniz algebra, where 
    \begin{align}\label{deri-proto}
        [(u, x) ,(v, y) ]_\rtimes = ( \rho^L (x, v) + \rho^R (u, y) ~ \!, ~ \! [x, y]_\mathfrak{g} ), \text{ for } (u, x), (v, y) \in V \oplus \mathfrak{g}.
    \end{align}
\end{exam}

\begin{exam}\label{rota-1-proto}
   Let $(\mathfrak{g}, [~,~]_\mathfrak{g})$ and $(\mathfrak{h}, [~,~]_\mathfrak{h})$ be two Leibniz algebras. Suppose there are bilinear maps $\rho^L : \mathfrak{g} \times \mathfrak{h} \rightarrow \mathfrak{h}$ and $\rho^R : \mathfrak{h} \times \mathfrak{g} \rightarrow \mathfrak{h}$ that makes the triple $(\mathfrak{h}, \rho^L, \rho^R)$ into a representation of the Leibniz algebra $(\mathfrak{g}, [~,~]_\mathfrak{g})$ satisfying additionally
   \begin{align}
       \rho^L (x, [u, v]_\mathfrak{h}) =~& [ \rho^L (x, u), v]_\mathfrak{h} + [u, \rho^L (x, v) ]_\mathfrak{h}, \label{leib-act-1}\\
       [u, \rho^L (x, v )]_\mathfrak{h}  =~& [ \rho^R (u, x), v ]_\mathfrak{h} + \rho^R (x, [u, v]_\mathfrak{h}), \\
       [ u, \rho^R (v, x)]_\mathfrak{h} =~& \rho^R ( [u, v]_\mathfrak{h}, x)+ [v, \rho^R (u, x)]_\mathfrak{h}, \label{leib-act-3}
   \end{align}
   for all $x \in \mathfrak{g}$ and $u, v \in \mathfrak{h}$. Then  the semidirect product $(\mathfrak{g} \oplus \mathfrak{h}, [~,~]_\ltimes)$ is a proto-twilled Leibniz algebra, where
   \begin{align*}
       [(x, u), (y, v)]_\ltimes = (  [x, y]_\mathfrak{g} ~ \!  , ~ \! [u, v]_\mathfrak{h} + \rho^L (x, v) + \rho^R (u, y) ), \text{ for } (x, u), (y, v) \in \mathfrak{g} \oplus \mathfrak{h}.
   \end{align*}
   Symmetrically, $(\mathfrak{h} \oplus \mathfrak{g}, [~,~]_\rtimes)$ is a proto-twilled Leibniz algebra, where
   \begin{align}\label{crossed-proto}
       [(u, x), (v, y)]_\rtimes = ( [u, v]_\mathfrak{h} + \rho^L (x, v) + \rho^R (u, y)  ~\! , ~ \! [x, y]_\mathfrak{g}), \text{ for } (u, x), (v, y) \in \mathfrak{h} \oplus \mathfrak{g}.
   \end{align}
\end{exam}

\begin{exam}\label{mod-proto}
    Let $(\mathfrak{g}, [~,~]_\mathfrak{g})$ be a Leibniz algebra. Then $(\mathfrak{g} \oplus \mathfrak{g}, [~,~]_\mathrm{mod})$ is a proto-twilled Leibniz algebra, where
    \begin{align*}
        [ (x', x), (y', y)]_\mathrm{mod} = ( [x', y']_\mathfrak{g} + [x, y]_\mathfrak{g} ~ \!, ~ \!  [x', y]_\mathfrak{g} + [x, y']_\mathfrak{g} ), \text{ for } (x', x), (y', y) \in \mathfrak{g} \oplus \mathfrak{g}.
    \end{align*}
\end{exam}

\begin{exam}\label{twisted-proto}
    Let $(\mathfrak{g}, [~,~]_\mathfrak{g})$ be a Leibniz algebra and $(V, \rho^L, \rho^R)$ be a representation of it. Suppose $\theta \in Z^2_\mathrm{Leib} (\mathfrak{g}, V)$ is a Leibniz $2$-cocycle (i.e. $\theta \in C^2_\mathrm{Leib} (\mathfrak{g}, V)$ with $\delta_\mathrm{Leib} \theta = 0$). Then $(\mathfrak{g} \oplus V, [~,~]_{\ltimes_\theta})$ is a proto-twilled Leibniz algebra, where
    \begin{align*}
        [ (x, u), (y, v) ]_{\ltimes_\theta} = \big( [x, y]_\mathfrak{g} ~ \! , ~ \! \rho^L (x, v) + \rho^R (u, y) + \theta (x, y) \big), \text{ for } (x, u), (y, v) \in \mathfrak{g} \oplus V.
    \end{align*}
\end{exam}

\begin{exam}\label{rey-proto}
    Let $(\mathfrak{g}, [~,~]_\mathfrak{g})$ be a Leibniz algebra. Then $(\mathfrak{g} \oplus \mathfrak{g}, [~,~]_{\ltimes_{-}})$ is a proto-twilled Leibniz algebra, where
    \begin{align*}
        [ (x', x), (y', y)]_{\ltimes_{-}} = ( [x', y']_\mathfrak{g} ~ \! , ~ \! [x', y]_\mathfrak{g} + [x, y']_\mathfrak{g} - [x', y']_\mathfrak{g}), \text{ for } (x', x) , (y', y) \in \mathfrak{g} \oplus \mathfrak{g}.
    \end{align*}
\end{exam}

\begin{exam}\label{hemi-proto}
    Let $(\mathfrak{g}, [~,~]_\mathfrak{g})$ be a Lie algebra and $(V, \rho)$ be a representation of it. Then the hemi-semidirect product $(\mathfrak{g} \oplus V, [~,~]_\mathrm{hemi})$ is a proto-twilled Leibniz algebra, where
    \begin{align*}
        [(x, u), (y, v) ]_\mathrm{hemi} = ( [x, y]_\mathfrak{g} ~ \!, ~ \! \rho (x)v), \text{ for } (x, u), (y, v) \in \mathfrak{g} \oplus V. 
    \end{align*}
\end{exam}

More generally, any twilled Leibniz algebras and quasi-twilled Leibniz algebras are examples of proto-twilled Leibniz algebras.

\medskip

Let $\mathfrak{g}$ and $\mathfrak{h}$ be two vector spaces. Let $\mathcal{G}^{k, l}$ be the direct sum of all possible $k+l$ tensor powers of $\mathfrak{g}$ and $\mathfrak{h}$ in which $\mathfrak{g}$ appears $k$ times and $\mathfrak{h}$ appears $l$ times. For example,
\begin{align*}
    \mathcal{G}^{1,1} = (\mathfrak{g} \otimes \mathfrak{h}) \oplus (\mathfrak{h} \otimes \mathfrak{g}) ~~~~ \text{ and } ~~~~ \mathcal{G}^{2,1} = (\mathfrak{g} \otimes \mathfrak{g} \otimes \mathfrak{h}) \oplus (\mathfrak{g} \otimes \mathfrak{h} \otimes \mathfrak{g}) \oplus (\mathfrak{h} \otimes \mathfrak{g} \otimes \mathfrak{g}) ~~ \text{etc.}
\end{align*}
Thus, if we write $\mathcal{G} = \mathfrak{g} \oplus \mathfrak{h}$ then for any $n \geq 0$, we have $\mathcal{G}^{\otimes n+1} = \oplus_{\substack{k + l = n+1\\k, l \geq 0}} \mathcal{G}^{k, l}$. For any linear map $c : \mathcal{G}^{k,l} \rightarrow \mathfrak{g}$ (or $c : \mathcal{G}^{k,l} \rightarrow \mathfrak{h}$), one can define a linear map $\widetilde{c} \in \mathrm{Hom} (\mathcal{G}^{\otimes k + l}, \mathcal{G})$ by
\begin{align*}
    \widetilde{c} = \begin{cases}
c & \text{ on } \mathcal{G}^{k,l} \\
0 & \text{ otherwise}.
    \end{cases}
\end{align*}
The map $\widetilde{c}$ is called the {\em horizontal lift} of $c$. Recall from \cite{tang-sheng} that a map $f \in \mathrm{Hom} (\mathcal{G}^{\otimes n+1}, \mathcal{G})$ is said to have {\em bidegree} $k|l$ with $k, l \geq -1$ and $k+l = n$ if the map $f$ satisfies
\begin{align*}
    f (\mathcal{G}^{k+1, l}) \subset \mathfrak{g}, \quad f ( \mathcal{G}^{k, l+1}) \subset \mathfrak{h} ~~~~ \text{ and } ~~~~ f = 0 \text{ otherwise}.
\end{align*}
We denote the set of all maps of bidegree $k|l$ by $C^{k|l}$. Then we have the following isomorphism $C^{k|l} \cong \mathrm{Hom} (\mathcal{G}^{k+1, l}, \mathfrak{g}) \oplus \mathrm{Hom} (\mathcal{G}^{k, l+1}, \mathfrak{h})$. In one direction, this isomorphism is given by the restriction and in the other direction, it is given by the horizontal lifting map. Hence
\begin{align*}
    \mathrm{Hom} (\mathcal{G}^{\otimes n+1}, \mathcal{G}) = \oplus_{\substack{ k+l=n+1 \\ k , l \geq 0}} \big(  \mathrm{Hom} (\mathcal{G}^{k, l}, \mathfrak{g}) \oplus \mathrm{Hom} (\mathcal{G}^{k, l}, \mathfrak{h}) \big) \cong C^{n+1 | -1} \oplus C^{n|0} \oplus \cdots \oplus C^{0 | n} \oplus C^{-1 | n+1}.
\end{align*}

The following lemma has been proved in \cite{tang-sheng}.

\begin{lemma}\label{lemma-tang}
    Let $f \in C^{k_f | l_f}$ and $g \in C^{ k_g | l_g}$. Then their Balavoine bracket $\llbracket f, g \rrbracket_\mathsf{B} \in C^{k_f + l_f | k_g + l_g}$.
\end{lemma}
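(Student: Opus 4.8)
The plan is to prove the statement by a slot-by-slot analysis of how the insertion product $\diamond$ transforms bidegrees. Since
\[
\llbracket f, g \rrbracket_\mathsf{B} = f \diamond g - (-1)^{(k_f + l_f)(k_g + l_g)}\, g \diamond f
\]
and since each $C^{k|l}$ is a linear subspace of $\bigoplus_n \mathrm{Hom}(\mathcal{G}^{\otimes n+1}, \mathcal{G})$, it suffices to prove that each of the two composites $f \diamond g$ and $g \diamond f$ lands in the target space $C^{k_f + l_f \,|\, k_g + l_g}$; being a linear combination of two elements of this subspace, the bracket then lies there as well. The roles of $f$ and $g$ enter the two composites symmetrically, so I would treat $f \diamond g$ in detail and obtain $g \diamond f$ by the same reasoning.

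First I would record the numerical consistency. The map $f$ has arity $k_f + l_f + 1$ and $g$ has arity $k_g + l_g + 1$, so $f \diamond g$ has arity $(k_f + l_f) + (k_g + l_g) + 1$, which is exactly the arity of a map whose bidegree sums to $(k_f + l_f) + (k_g + l_g)$. This reduces the problem to a purely combinatorial one: to pin down the tensor types $\mathcal{G}^{p,q}$ on which $f \diamond g$ is permitted to be nonzero, and for each such type to determine whether the output lands in $\mathfrak{g}$ or in $\mathfrak{h}$.

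The core step is a substitution analysis of a single summand of $f \diamond g$. In such a summand the inner map $g$ is applied to a contiguous block of inputs and its output occupies one input slot of the outer map $f$, the remaining slots of $f$ being filled by the other original inputs. Because $g \in C^{k_g | l_g}$, this summand can be nonzero only when $g$ receives inputs of type $\mathcal{G}^{k_g + 1, l_g}$ (producing an output in $\mathfrak{g}$) or of type $\mathcal{G}^{k_g, l_g + 1}$ (producing an output in $\mathfrak{h}$). In each of these two cases I would record the type of the slot that $g$ feeds into $f$ and then use $f \in C^{k_f | l_f}$ to read off the profile of original inputs for which $f$ is nonzero together with the target of $f$. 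Counting the $\mathfrak{g}$- and $\mathfrak{h}$-factors consumed by $g$ and by the remaining slots of $f$ then yields, in both cases, the two tensor types on which the composite survives and the target attached to each; the plan is to check that these data coincide with the bidegree $k_f + l_f \,|\, k_g + l_g$ asserted in the statement.

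I expect the main obstacle to be exactly this bookkeeping: one must verify that the two substitution cases ($g$-output in $\mathfrak{g}$ versus in $\mathfrak{h}$) are supported on the same pair of tensor types, and that this support is stable across the sum over insertion positions and over the shuffles $\sigma \in \mathbb{S}_{(i-1,\,n-1)}$, so that no individual term contributes to a summand $\mathcal{G}^{p,q}$ outside the prescribed pair. Once the supports of all terms are seen to agree, the symmetry between $f$ and $g$ disposes of $g \diamond f$ in the same way, and the reduction in the first paragraph completes the argument.
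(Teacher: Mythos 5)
Your engine---classifying each insertion summand of $f \diamond g$ by whether $g$ consumes $\mathcal{G}^{k_g+1,l_g}$ (output in $\mathfrak{g}$) or $\mathcal{G}^{k_g,l_g+1}$ (output in $\mathfrak{h}$), and then reading off the remaining slots of $f$ from its bidegree---is exactly the right one, and it is the argument of Tang--Sheng, to which the paper defers without giving its own proof (the lemma is stated with a citation only). But the conclusion you promise to verify is not what your own bookkeeping yields. Carrying out your four cases: if $g$ outputs into $\mathfrak{g}$ and $f$ outputs into $\mathfrak{g}$, the summand is supported on inputs of type $\mathcal{G}^{k_f+k_g+1,\,l_f+l_g}$ (the $k_g+1$ factors of $\mathfrak{g}$ eaten by $g$, plus the $k_f$ remaining $\mathfrak{g}$-slots and $l_f$ remaining $\mathfrak{h}$-slots of $f$); the other three cases give support $\mathcal{G}^{k_f+k_g+1,\,l_f+l_g}$ with values in $\mathfrak{g}$ or $\mathcal{G}^{k_f+k_g,\,l_f+l_g+1}$ with values in $\mathfrak{h}$. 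Hence $f \diamond g$, $g \diamond f$, and the bracket all lie in $C^{k_f+k_g\,|\,l_f+l_g}$, \emph{not} in $C^{k_f+l_f\,|\,k_g+l_g}$ as you assert in your final check. The statement you were handed contains a typo (the two formulas agree in total degree, which is why your arity consistency check cannot distinguish them, but they differ componentwise), and your plan as written endorses the typo. Concretely, take $f=g=\mu \in C^{1|0}$: then $\llbracket \mu,\mu \rrbracket_\mathsf{B}$ restricted to $\mathcal{G}^{3,0}$ is the Leibnizator of $[~,~]_\mathfrak{g}$, generically nonzero with values in $\mathfrak{g}$, so the bracket lies in $C^{2|0}$ and cannot lie in $C^{1|1}$, whose elements vanish on $\mathcal{G}^{3,0}$.

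Two signals should have flagged this before you committed to the stated target. First, your own symmetry remark: treating $g \diamond f$ ``by the same reasoning'' would place it in $C^{k_g+l_g\,|\,k_f+l_f}$, which differs from your claimed common target unless $k_f+l_f = k_g+l_g$; only the corrected bidegree $C^{k_f+k_g\,|\,l_f+l_g}$ is symmetric in $f$ and $g$, as any target for the (anti)symmetrized bracket must be. Second, every later use of the lemma in the paper forces the corrected reading: in the proof of Theorem \ref{thm-mc-operator} one needs $\llbracket \nu, f \rrbracket_\mathsf{B} \in C^{-1|n+2} \subset \mathfrak{a}$ for $\nu \in C^{0|1}$ and $f \in C^{-1|n+1}$ (the literal statement would give $C^{1|n}$, which is not even contained in $\mathfrak{a}$), and the vanishing $l_k = 0$ for $k \geq 4$ holds precisely because each successive bracketing with an element of $C^{-1|\ast}$ lowers the first index by one until it drops below $-1$. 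So keep your argument but fix the target; and add a sentence for the degenerate indices $k$ or $l$ equal to $-1$, where one of the two defining conditions of $C^{k|l}$ is vacuous---the counting goes through, but it is where the bound $k,l \geq -1$ does real work.
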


As a consequence of the above lemma, we get the following.

\begin{prop}\label{prop-mqr}
    Let $\mathfrak{g}$ and $\mathfrak{h}$ be two vector spaces. For each $n \geq 0$, we define
    \begin{align*}
        \mathcal{M}_n :=~& C^{n | 0} \oplus \cdots \oplus C^{0 | n} \subset \mathrm{Hom} (\mathcal{G}^{\otimes n+1}, \mathcal{G}),\\
        \mathcal{Q}_n :=~& C^{n+1|-1} \oplus  C^{n | 0} \oplus \cdots \oplus C^{0 | n} = C^{n+1 | -1} \oplus \mathcal{M}_n \subset \mathrm{Hom} (\mathcal{G}^{\otimes n+1}, \mathcal{G}),\\
        \mathcal{R}_n :=~& C^{n | 0} \oplus \cdots \oplus C^{0 | n} \oplus C^{-1 | n+1} = \mathcal{M}_n \oplus C^{-1 | n+1} \subset \mathrm{Hom} (\mathcal{G}^{\otimes n+1}, \mathcal{G}).
    \end{align*}
    Then $\oplus_{n=0}^\infty \mathcal{M}_n$, $\oplus_{n=0}^\infty \mathcal{Q}_n$ and $\oplus_{n=0}^\infty \mathcal{R}_n$ are all graded Lie subalgebras of $\big( \oplus_{n=0}^\infty \mathrm{Hom} (\mathcal{G}^{\otimes n+1}, \mathcal{G}) , \llbracket ~,~ \rrbracket_\mathsf{B} \big).$
\end{prop}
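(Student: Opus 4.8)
The plan is to exploit that the Balavoine bracket already makes $\bigoplus_{n \geq 0}\mathrm{Hom}(\mathcal{G}^{\otimes n+1},\mathcal{G})$ into a graded Lie algebra. Consequently, to prove that each of $\bigoplus_n \mathcal{M}_n$, $\bigoplus_n \mathcal{Q}_n$, $\bigoplus_n \mathcal{R}_n$ is a graded Lie \emph{subalgebra}, it suffices to check two things: that each is a graded subspace, and that each is closed under $\llbracket\,,\,\rrbracket_\mathsf{B}$. Graded antisymmetry and the graded Jacobi identity are then inherited for free from the ambient algebra, so no further verification is needed.

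The graded-subspace property is immediate from the definitions, since each $\mathcal{M}_n$, $\mathcal{Q}_n$, $\mathcal{R}_n$ is a direct sum of bidegree pieces $C^{k|l}$ with $k+l=n$, and every such piece sits inside the single homogeneous component $\mathrm{Hom}(\mathcal{G}^{\otimes n+1},\mathcal{G})$ of degree $n$. The first thing I would do is repackage the three subspaces by the ranges of the bidegrees they contain:
\[
\bigoplus_n \mathcal{M}_n = \bigoplus_{k,l\geq 0} C^{k|l}, \qquad \bigoplus_n \mathcal{Q}_n = \Big(\bigoplus_{k,l\geq 0} C^{k|l}\Big)\oplus\Big(\bigoplus_{k\geq 1} C^{k|-1}\Big), \qquad \bigoplus_n \mathcal{R}_n = \Big(\bigoplus_{k,l\geq 0} C^{k|l}\Big)\oplus\Big(\bigoplus_{l\geq 1} C^{-1|l}\Big).
\]
In words, $\bigoplus_n\mathcal{M}_n$ is the span of all bidegrees with both indices nonnegative, while $\bigoplus_n\mathcal{Q}_n$ (resp. $\bigoplus_n\mathcal{R}_n$) additionally admits the pieces whose second (resp. first) index equals $-1$, and those pieces always have the complementary index $\geq 1$.

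For closure I would invoke Lemma \ref{lemma-tang}, according to which the Balavoine bracket is additive in the bidegree: $\llbracket f,g\rrbracket_\mathsf{B}\in C^{k_f+k_g\,|\,l_f+l_g}$ whenever $f\in C^{k_f|l_f}$ and $g\in C^{k_g|l_g}$. For $\bigoplus_n\mathcal{M}_n$ this settles the matter instantly: sums of nonnegative indices are nonnegative. For $\bigoplus_n\mathcal{Q}_n$, both inputs have first index $\geq 0$ and second index $\geq -1$, so $k_f+k_g\geq 0$ and $l_f+l_g\geq -2$; I would then split into cases according to the value of $l_f+l_g$. If $l_f+l_g\geq 0$ the bracket lands in the $\mathcal{M}$-part; if $l_f+l_g=-1$ then exactly one input has second index $-1$, which forces its first index $\geq 1$ and hence $k_f+k_g\geq 1$, placing the bracket in $\bigoplus_{k\geq 1}C^{k|-1}$; and if $l_f+l_g=-2$ the target bidegree is $C^{\bullet|-2}$, which is the zero space, so the bracket vanishes. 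In all cases the result stays in $\bigoplus_n\mathcal{Q}_n$. The argument for $\bigoplus_n\mathcal{R}_n$ is the mirror image, obtained by interchanging the two indices (equivalently, the roles of $\mathfrak{g}$ and $\mathfrak{h}$).

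The step I expect to require the most care is precisely this case analysis at the boundary, where a bidegree index threatens to fall below $-1$. The clean way through is to use that $C^{k|l}=0$ unless $k,l\geq -1$ (there being no tensor powers with negative exponents), so the additivity of Lemma \ref{lemma-tang} automatically forces any out-of-range bracket to vanish rather than escape the subspace; combined with the observation that an index equal to $-1$ is always accompanied by a complementary index $\geq 1$, this guarantees closure for all three families and completes the proof.
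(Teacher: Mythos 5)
Your proof is correct and follows exactly the route the paper intends: the paper gives no written argument beyond the remark that the proposition is ``a consequence of'' Lemma \ref{lemma-tang}, and your bidegree case analysis (additivity of bidegrees under $\llbracket~,~\rrbracket_\mathsf{B}$, plus the convention that $C^{k|l}=0$ once an index drops below $-1$) is precisely the verification being left to the reader. Note only that you have, rightly, used Lemma \ref{lemma-tang} in its additive form $\llbracket f,g\rrbracket_\mathsf{B}\in C^{k_f+k_g|l_f+l_g}$ --- which is what the paper itself relies on elsewhere (e.g.\ to show $l_k=0$ for $k\geq 4$) --- rather than the form $C^{k_f+l_f|k_g+l_g}$ as literally printed in the lemma, which appears to be a typo.
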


\medskip

Next, suppose there is a map $\Omega = [~,~]_\mathcal{G} \in \mathrm{Hom} ( \mathcal{G}^{\otimes 2}, \mathcal{G})$.
Note that $[~,~]_\mathcal{G}$ can be written as
\begin{align*}
    [ (x , u) , (y, v) ]_\mathcal{G} = \big(  [x, y]_\mathfrak{g} + \psi^R (x, v) + \psi^L (u, y) + \eta (u, v) ~ \! , ~ \! [u, v]_\mathfrak{h}  + \rho^L (x, v) + \rho^R (u, y) + \theta (x, y) \big),
\end{align*}
for $(x, u), (y, v) \in \mathfrak{g} \oplus \mathfrak{h}$, where
\begin{align*}
    &[~,~]_\mathfrak{g} : \mathfrak{g} \times \mathfrak{g} \rightarrow \mathfrak{g}, \quad [~,~]_\mathfrak{h} :  \mathfrak{h} \times \mathfrak{h} \rightarrow \mathfrak{h}, \quad \rho^L :  \mathfrak{g} \times \mathfrak{h} \rightarrow \mathfrak{h}, \quad \rho^R :  \mathfrak{h} \times \mathfrak{g} \rightarrow \mathfrak{h},\\
    &\psi^L :  \mathfrak{h} \times \mathfrak{g} \rightarrow \mathfrak{g}, \qquad \psi^R :  \mathfrak{g} \times \mathfrak{h} \rightarrow \mathfrak{g}, \qquad \theta :  \mathfrak{g} \times \mathfrak{g} \rightarrow \mathfrak{h}, \quad \eta :  \mathfrak{h} \times \mathfrak{h} \rightarrow \mathfrak{g}
\end{align*}
are bilinear maps that are uniquely characterized by
\begin{align*}
    [ (x,0), (y, 0)]_\mathcal{G} =~& ( [x, y]_\mathfrak{g}, \theta (x, y)),  \qquad &&[(0, u), (0, v)]_\mathcal{G} = ( \eta (u, v), [u, v]_\mathfrak{h}), \\
    [(x, 0), (0, u)]_\mathcal{G} =~& ( \psi^R (x, u), \rho^L (x, u)), \quad && [(0, u), (x, 0)]_\mathcal{G} = ( \psi^L (u, x) , \rho^R (u, x)),
\end{align*}
for $x, y \in \mathfrak{g}$ and $u, v \in \mathfrak{h}$. On the other hand, since $\Omega = [~,~]_\mathcal{G} \in \mathrm{Hom} (\mathcal{G}^{\otimes 2}, \mathcal{G}) = C^{2 | -1} \oplus C^{1| 0} \oplus C^{0|1} \oplus C^{-1 | 2}$, it has components in $C^{2 | -1}$, $C^{1|0}$, $C^{0|1}$ and $C^{-1|2}$. Explicitly, we have
\begin{align*}
   \Omega = [~,~]_\mathcal{G} = \widetilde{\theta} + \mu + \nu  + \widetilde{\eta},
\end{align*}
where $\mu \in C^{1|0}$ and $\nu \in C^{0|1}$ are respectively given by
\begin{align}
    \mu (( x, u), (y,v)) :=~& (\widetilde{[~,~]_\mathfrak{g}} + \widetilde{\rho^L} + \widetilde{\rho^R})  (( x, u), (y,v)) = ( [x, y]_\mathfrak{g} ~ \! , ~ \! \rho^L (x, v) + \rho^R (u, y)), \label{mu-nu1} \\ \nu ((x, u), (y, v)) :=~&  (\widetilde{[~,~]_\mathfrak{h}} + \widetilde{\psi^L} + \widetilde{\psi^R}) (( x, u), (y,v)) =  ( \psi^R (x, v) + \psi^L (u, y) ~ \! , ~ \! [u, v]_\mathfrak{h}). \label{mu-nu2}
\end{align}

With the above notations, $(\mathcal{G} = \mathfrak{g} \oplus \mathfrak{h}, [~,~]_\mathcal{G})$ is a proto-twilled Leibniz algebra (i.e. $[~,~]_\mathcal{G}$ satisfies the Leibniz identity or equivalently, $\llbracket \Omega , \Omega \rrbracket_\mathsf{B} = 0$) if and only if 
\begin{align*}
    \llbracket \mu, \widetilde{\theta} \rrbracket_\mathsf{B} =~& 0, \\
    \llbracket \mu, \mu \rrbracket_\mathsf{B} + 2  \llbracket \nu, \widetilde{\theta} \rrbracket_\mathsf{B} =~& 0, \\
     \llbracket \mu, \nu \rrbracket_\mathsf{B} + \llbracket \widetilde{\theta}, \widetilde{\eta} \rrbracket_\mathsf{B} =~& 0,\\
      \llbracket \nu, \nu \rrbracket_\mathsf{B} + 2  \llbracket \mu, \widetilde{\eta} \rrbracket_\mathsf{B} =~& 0,\\
       \llbracket \nu, \widetilde{\eta} \rrbracket_\mathsf{B} =~& 0.
\end{align*}





\begin{defn}\label{defn-defor-map}
    A linear map $r : \mathfrak{h} \rightarrow \mathfrak{g}$ is said to be a {\bf deformation map} in a proto-twilled Leibniz algebra $(\mathcal{G} = \mathfrak{g} \oplus \mathfrak{h}, [~,~]_\mathcal{G})$ if it satisfies
    \begin{align}\label{defor-map-iden}
    [ r (u), r (v)]_\mathfrak{g} + \psi^R (r (u), v) + \psi^L ( u, r (v)) + \eta (u, v) = r \big( [u, v]_\mathfrak{h} + \rho^L ( r (u), v) + \rho^R (u, r (v)) + \theta ( r (u), r (v))   \big),
\end{align}
for all $u, v \in \mathfrak{h}$.
\end{defn}

In the following, we will show that a wide class of operators on Leibniz algebras can be realized as deformation maps in suitable proto-twilled Leibniz algebras.

\begin{exam}
    ({Leibniz algebra homomorphisms}) Let $(\mathfrak{g}, [~,~]_\mathfrak{g})$ and $(\mathfrak{h}, [~,~]_\mathfrak{h})$ be two Leibniz algebras. A linear map $r : \mathfrak{h} \rightarrow \mathfrak{g}$ is a deformation map in the direct product $(\mathfrak{g} \oplus \mathfrak{h}, [~,~]_\mathrm{dir})$ given in Example \ref{dir-product} if and only if $r$ is a {\em Leibniz algebra homomorphism}, i.e.
    \begin{align*}
        r ([u, v]_\mathfrak{h}) = [ r (u), r (v)]_\mathfrak{g}, \text{ for all } u , v \in \mathfrak{h}.
    \end{align*}
\end{exam}

\begin{exam}\label{exam-relative}
    (Relative Rota-Baxter operators of weight $0$) Let $(\mathfrak{g}, [~,~]_\mathfrak{g})$ be a Leibniz algebra and $(V, \rho^L, \rho^R)$ be a representation of it. Then a linear map $r : V \rightarrow \mathfrak{g}$ is a deformation map in the semidirect product Leibniz algebra $(\mathfrak{g} \oplus V, [~,~]_\ltimes)$ given in Example \ref{semi-proto} if and only if $r$ is a {\em relative Rota-Baxter operator of weight $0$} \cite{tang-sheng,tang-sheng-zhou}, i.e.
    \begin{align*}
        [ r(u),  r (v) ]_\mathfrak{g} = r ( \rho^L (r (u), v) + \rho^R (u, r(v))), \text{ for all } u, v \in V.
    \end{align*}

    Let $(\mathfrak{g}, [~,~]_\mathfrak{g})$ be a Leibniz algebra. An element $s \in \mathrm{Sym}^2 (\mathfrak{g})$ is said to be a {\bf classical Leibniz r-matrix} if it satisfies the {\em classical Leibniz Yang-Baxter equation} $[ [ s, s ]  ] = 0$ where the explicit description of the bracket $[[ ~, ~ ]]$ is given in \cite{tang-sheng}. It has been shown that an element $s \in \mathrm{Sym}^2 (\mathfrak{g})$ is a classical Leibniz r-matrix if and only if the map
\begin{align*}
    s^\sharp : \mathfrak{g}^* \rightarrow \mathfrak{g} ~~ \text{ defined by } ~~ s^\sharp (\alpha) (\beta) := s (\alpha, \beta), \text{ for } \alpha , \beta \in \mathfrak{g}^*
\end{align*}
is a relative Rota-Baxter operator of weight $0$ (where $\mathfrak{g}^*$ is endowed with the coadjoint representation (see Example \ref{exam-ad-coad})). Therefore, a classical Leibniz r-matrix gives rise to a deformation map in the semidirect product Leibniz algebra $(\mathfrak{g} \oplus \mathfrak{g}^*, [~,~ ]_\ltimes)$.

\end{exam}

\begin{exam}
    ({Derivations}) Let $(\mathfrak{g}, [~,~]_\mathfrak{g})$ be a Leibniz algebra and $(V, \rho^L, \rho^R)$ be a representation of it. Consider the proto-twilled Leibniz algebra $(V \oplus \mathfrak{g}, [~,~]_\rtimes)$ given in (\ref{deri-proto}). Then a linear map $ r: \mathfrak{g} \rightarrow V$ is a deformation map if and only if $r$ is a {\em derivation} \cite{das-leib-hrs}, i.e.
    \begin{align*}
        r ([x, y]_\mathfrak{g} ) = \rho^L (x, r (y)) + \rho^R ( r (x), y), \text{ for all } x, y \in \mathfrak{g}.
    \end{align*}
\end{exam}

\begin{exam}
    (Relative Rota-Baxter operators of weight $1$)  Let $(\mathfrak{g}, [~,~]_\mathfrak{g})$ and $(\mathfrak{h}, [~,~]_\mathfrak{h})$ be two Leibniz algebras. Suppose there are bilinear maps $\rho^L : \mathfrak{g} \times \mathfrak{h} \rightarrow \mathfrak{h}$ and $\rho^R : \mathfrak{h} \times \mathfrak{g} \rightarrow \mathfrak{h}$ that makes the triple $(\mathfrak{h}, \rho^L, \rho^R)$ into a representation of the Leibniz algebra $(\mathfrak{g}, [~,~]_\mathfrak{g})$ satisfying additionally (\ref{leib-act-1})-(\ref{leib-act-3}). Then a linear map $ r : \mathfrak{h} \rightarrow \mathfrak{g}$ is a deformation map in the semidirect product $(\mathfrak{g} \oplus \mathfrak{h}, [~,~]_\ltimes)$ given in Example \ref{rota-1-proto} if and only if $r$ is a {\em relative Rota-Baxter operator of weight $1$} \cite{das-leib-pub}, i.e.
    \begin{align*}
        [ r(u),  r (v) ]_\mathfrak{g} = r ( [u, v]_\mathfrak{h} + \rho^L (r (u), v) + \rho^R (u, r(v))), \text{ for all } u, v \in \mathfrak{h}.
    \end{align*}
\end{exam}

\begin{exam}
    (Crossed homomorphisms) With the assumptions given in the previous example, we consider the proto-twilled Leibniz algebra $(\mathfrak{h} \oplus \mathfrak{g}, [~,~]_\rtimes)$ given in (\ref{crossed-proto}).
    Then a linear map $r : \mathfrak{g} \rightarrow \mathfrak{h}$ is a deformation map in $(\mathfrak{h} \oplus \mathfrak{g}, [~,~]_\rtimes)$ if and only if $r$ is a {\em crossed homomorphism} \cite{li-wang}, i.e.
    \begin{align*}
        r ( [x, y]_\mathfrak{g}) = [ r(x), r (y)]_\mathfrak{h} + \rho^L (x, r(y)) + \rho^R ( r(x), y), \text{ for } x, y \in \mathfrak{g}.
    \end{align*}
\end{exam}

\begin{exam}
    (Modified Rota-Baxter operators) Let $(\mathfrak{g}, [~,~]_\mathfrak{g})$ be a Leibniz algebra. Then a linear map $r : \mathfrak{g} \rightarrow \mathfrak{g}$ is a deformation map in the proto-twilled Leibniz algebra $(\mathfrak{g} \oplus \mathfrak{g}, [~,~]_\mathrm{mod})$ given in Example \ref{mod-proto} if and only if $r$ satisfies
    \begin{align*}
        [r(x), r(y)]_\mathfrak{g} = r ( [r(x), y]_\mathfrak{g} + [x, r(y)]_\mathfrak{g}) - [x, y]_\mathfrak{g}, \text{ for all } x, y \in \mathfrak{g}.
    \end{align*}
    This is equivalent to $r$ being a {\em modified Rota-Baxter operator} \cite{saha}.
\end{exam}

\begin{exam}
    (Twisted Rota-Baxter operators) Let $( \mathfrak{g}, [~, ~]_\mathfrak{g}) $ be a Leibniz algebra and $(V, \rho^L, \rho^R)$ be a representation of it. For any Leibniz $2$-cocycle $\theta \in Z^2_\mathrm{Leib} (\mathfrak{g}, V)$, we consider the proto-twilled Leibniz algebra $(\mathfrak{g} \oplus V, [~,~]_{\ltimes_\theta})$ given in Example \ref{twisted-proto}. Then a linear map $r : V \rightarrow \mathfrak{g}$ is a deformation map if and only if it satisfies
    \begin{align*}
        [ r (u), r(v)]_\mathfrak{g} = r \big(  \rho^L ( r(u), v) + \rho^R (u, r(v)) + \theta ( r(u), r(v)) \big), \text{ for all } u, v \in V.
    \end{align*}
    In other words, $r$ is a {\em $\theta$-twisted Rota-Baxter operator} \cite{das-guo}.
\end{exam}

\begin{exam}
    (Reynolds operators) Let $(\mathfrak{g}, [~,~]_\mathfrak{g})$ be a Leibniz algebra. Consider the proto-twilled Leibniz algebra $(\mathfrak{g} \oplus \mathfrak{g}, [~,~]_{\ltimes_{-}})$ given in Example \ref{rey-proto}. For this proto-twilled Leibniz algebra, a linear map $r : \mathfrak{g} \rightarrow \mathfrak{g}$ is a deformation map if and only if $r$ is a {\em Reynolds operator} \cite{das-guo}, i.e.
    \begin{align*}
        [ r(x) , r (y)]_\mathfrak{g} = r  ( [r(x), y]_\mathfrak{g} + [x, r(y)]_\mathfrak{g} - [ r (x), r (y)]_\mathfrak{g} ), \text{ for all } x, y \in \mathfrak{g}.
    \end{align*}
\end{exam}

\begin{exam}
    (Embedding tensors) Let $( \mathfrak{g}, [~, ~]_\mathfrak{g})$ be a Lie algebra and $(V, \rho)$ be a representation. Then a linear map $r : V \rightarrow \mathfrak{g}$ is a deformation map in the hemi-semidirect product $( \mathfrak{g} \oplus V, [~,~]_\mathrm{hemi})$ if and only if it satisfies
    \begin{align*}
        [r (u), r(v)]_\mathfrak{g} = r ( \rho (u) v), \text{ for all } u, v \in V,
    \end{align*}
    which means that $r$ is an {\em embedding tensor} \cite{kotov,sheng-embed}.
\end{exam}

\begin{exam}\label{agore-exam}
    (Deformation maps in a matched pair of Leibniz algebras) In \cite{agore} Agore and Militaru defined deformation maps in a matched pair of Leibniz algebras in the study of classifying compliments. 
    Let $(\mathcal{G} = \mathfrak{g} \oplus \mathfrak{h}, [~,~]_\mathcal{G})$ be a matched pair of Leibniz algebras (or a twilled Leibniz algebra), i.e. $\mathfrak{g}, \mathfrak{h} \subset \mathcal{G}$ are both Leibniz subalgebras. Thus, in terms of the components of the bracket $[~,~]_\mathcal{G}$, we have $\theta = 0$ and $\eta = 0$. Recall from \cite{agore} that a linear map $r : \mathfrak{h} \rightarrow \mathfrak{g}$ is a {\em deformation map} in the matched pair of Leibniz algebras if it satisfies
    \begin{align*}
    [ r (u), r (v)]_\mathfrak{g} + \psi^R (r (u), v) + \psi^L ( u, r (v))  = r \big( [u, v]_\mathfrak{h} + \rho^L ( r (u), v) + \rho^R (u, r (v))   \big), \text{ for all } u, v \in \mathfrak{h}.
\end{align*}
    Thus, $r$ is a deformation map in the corresponding proto-twilled Leibniz algebra.
\end{exam}

\medskip

\section{Cohomology of a deformation map}\label{sec4}
In this section, we first show that a deformation map $r: \mathfrak{h} \rightarrow \mathfrak{g}$ in a proto-twilled Leibniz algebra $(\mathcal{G} = \mathfrak{g} \oplus \mathfrak{h}, [~,~]_\mathcal{G})$ induces a Leibniz algebra structure on the vector space $\mathfrak{h}$. Moreover, there is a suitable representation of this induced Leibniz algebra. Finally, we define the cohomology of the deformation map $r$ as the Loday-Pirashvili cohomology of the induced Leibniz algebra with coefficients in that suitable representation.

\medskip

Let $(\mathcal{G} = \mathfrak{g} \oplus \mathfrak{h}, [~,~]_\mathcal{G})$ be a proto-twilled Leibniz algebra. As before, for any $(x , u) , (y, v) \in \mathfrak{g} \oplus \mathfrak{h} $, we let
\begin{align*}
     [ (x , u) , (y, v) ]_\mathcal{G} = \big(  [x, y]_\mathfrak{g} + \psi^R (x, v) + \psi^L (u, y) + \eta (u, v) ~ \! , ~ \! [u, v]_\mathfrak{h}  + \rho^L (x, v) + \rho^R (u, y) + \theta (x, y) \big).
\end{align*}

\begin{prop}
    A linear map $r : \mathfrak{h} \rightarrow \mathfrak{g}$ is a deformation map if and only if the graph $\mathrm{Gr} (r) = \{ ( r(u), u) | ~ \! u \in \mathfrak{h} \}$ is a subalgebra of the Leibniz algebra $(\mathcal{G}, [~,~]_\mathcal{G})$.
\end{prop}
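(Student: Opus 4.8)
The plan is to reduce the statement to a direct comparison between the closure condition for $\mathrm{Gr}(r)$ under $[~,~]_\mathcal{G}$ and the defining identity \eqref{defor-map-iden}. Since $r$ is linear, the graph $\mathrm{Gr}(r) = \{ (r(u), u) \mid u \in \mathfrak{h} \}$ is automatically a linear subspace of $\mathcal{G} = \mathfrak{g} \oplus \mathfrak{h}$; hence $\mathrm{Gr}(r)$ is a subalgebra if and only if it is closed under the bracket. The key observation I would record first is the characterization of membership: an element $(w, z) \in \mathfrak{g} \oplus \mathfrak{h}$ lies in $\mathrm{Gr}(r)$ precisely when $w = r(z)$.

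Next I would compute the bracket of two arbitrary elements of the graph. Taking $u, v \in \mathfrak{h}$ and substituting $(x, u) = (r(u), u)$ and $(y, v) = (r(v), v)$ into the explicit expression
\begin{align*}
     [ (x , u) , (y, v) ]_\mathcal{G} = \big(  [x, y]_\mathfrak{g} + \psi^R (x, v) + \psi^L (u, y) + \eta (u, v) ~ \! , ~ \! [u, v]_\mathfrak{h}  + \rho^L (x, v) + \rho^R (u, y) + \theta (x, y) \big),
\end{align*}
yields an element of $\mathcal{G}$ whose $\mathfrak{g}$-component is $[r(u), r(v)]_\mathfrak{g} + \psi^R(r(u), v) + \psi^L(u, r(v)) + \eta(u, v)$ and whose $\mathfrak{h}$-component is $[u, v]_\mathfrak{h} + \rho^L(r(u), v) + \rho^R(u, r(v)) + \theta(r(u), r(v))$.

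By the membership criterion, this bracket lies in $\mathrm{Gr}(r)$ exactly when its $\mathfrak{g}$-component equals $r$ applied to its $\mathfrak{h}$-component, i.e.
\begin{align*}
    [ r (u), r (v)]_\mathfrak{g} + \psi^R (r (u), v) + \psi^L ( u, r (v)) + \eta (u, v) = r \big( [u, v]_\mathfrak{h} + \rho^L ( r (u), v) + \rho^R (u, r (v)) + \theta ( r (u), r (v))   \big),
\end{align*}
which is precisely identity \eqref{defor-map-iden}. Demanding this for all $u, v \in \mathfrak{h}$ is equivalent to $\mathrm{Gr}(r)$ being closed under $[~,~]_\mathcal{G}$, and both directions of the equivalence follow at once. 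There is no genuine obstacle here beyond careful bookkeeping of the eight component maps; the only point worth stating cleanly is the equivalence between closure of the subspace and the coordinatewise equation, which is immediate from the direct-sum decomposition $\mathcal{G} = \mathfrak{g} \oplus \mathfrak{h}$.
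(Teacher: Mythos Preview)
Your proof is correct and follows essentially the same approach as the paper: both compute the bracket of two arbitrary graph elements and observe that the result lies in $\mathrm{Gr}(r)$ precisely when identity \eqref{defor-map-iden} holds. Your version is slightly more explicit about the linear-subspace property and the membership criterion $(w,z)\in\mathrm{Gr}(r)\Leftrightarrow w=r(z)$, but the argument is the same.
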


\begin{proof}
    Let $(r(u), u) ,  (r(v), v) \in \mathrm{Gr} (r)$ be two arbitrary elements. Then we have
    \begin{align*}
        [ (r(u), u), (r (v), v)]_\mathcal{G} =~& \big(  [ r (u), r (v)]_\mathfrak{g} + \psi^R (r (u), v) + \psi^L ( u, r (v)) + \eta (u, v), \\
        & \qquad [u, v]_\mathfrak{h} + \rho^L ( r (u), v) + \rho^R (u, r (v)) + \theta ( r (u), r (v))   \big).
    \end{align*}
    This is in $\mathrm{Gr} (r)$ if and only if the identity (\ref{defor-map-iden}) holds. Hence the result follows.
\end{proof}

Since the vector space $\mathfrak{h}$ is linearly isomorphic to $\mathrm{Gr} (r)$ via the canonical identification $u \leftrightsquigarrow (r(u), u)$, we obtain the following result.

\begin{prop}\label{ind-leib}
    Let $r : \mathfrak{h} \rightarrow \mathfrak{g}$ be a deformation map in the proto-twilled Leibniz algebra $(\mathcal{G} = \mathfrak{g} \oplus \mathfrak{h}, [~,~]_\mathcal{G})$. Then the vector space $\mathfrak{h}$ inherits a Leibniz algebra structure with the bracket $[~,~]_r : \mathfrak{h} \times \mathfrak{h} \rightarrow \mathfrak{h}$ given by
    \begin{align*}
        [u, v]_r := [u, v]_\mathfrak{h} + \rho^L ( r(u), v) + \rho^R (u, r(v)) + \theta (r (u), r(v)), \text{ for } u , v \in \mathfrak{h}.
    \end{align*}
    (We denote the Leibniz algebra $(\mathfrak{h}, [~,~]_r)$ simply by $\mathfrak{h}_r$).
\end{prop}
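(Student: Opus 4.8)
The plan is to exploit the preceding proposition, which identifies deformation maps $r$ with the condition that the graph $\mathrm{Gr}(r) = \{ (r(u), u) \mid u \in \mathfrak{h} \}$ is a Leibniz subalgebra of $(\mathcal{G}, [~,~]_\mathcal{G})$. Since any subalgebra of a Leibniz algebra is again a Leibniz algebra under the restricted bracket, and since the assignment $\phi : \mathfrak{h} \to \mathrm{Gr}(r)$, $u \mapsto (r(u), u)$, is a linear isomorphism (this is the canonical identification $u \leftrightsquigarrow (r(u), u)$ already noted before the statement), it suffices to transport the Leibniz structure from $\mathrm{Gr}(r)$ back to $\mathfrak{h}$ along $\phi$ and to check that the transported bracket is exactly $[~,~]_r$.

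Concretely, first I would compute $[(r(u), u), (r(v), v)]_\mathcal{G}$ using the decomposition of $[~,~]_\mathcal{G}$ into its eight components $[~,~]_\mathfrak{g}, \psi^R, \psi^L, \eta$ (valued in $\mathfrak{g}$) and $[~,~]_\mathfrak{h}, \rho^L, \rho^R, \theta$ (valued in $\mathfrak{h}$). Its $\mathfrak{h}$-component is precisely $[u,v]_\mathfrak{h} + \rho^L(r(u), v) + \rho^R(u, r(v)) + \theta(r(u), r(v))$, which is by definition $[u,v]_r$, while its $\mathfrak{g}$-component is $[r(u), r(v)]_\mathfrak{g} + \psi^R(r(u), v) + \psi^L(u, r(v)) + \eta(u, v)$. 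The deformation map identity (\ref{defor-map-iden}) says exactly that the $\mathfrak{g}$-component equals $r$ applied to the $\mathfrak{h}$-component, whence
\begin{align*}
[(r(u), u), (r(v), v)]_\mathcal{G} = \big( r([u,v]_r),\, [u,v]_r \big) = \phi([u,v]_r).
\end{align*}
This displays $\phi([u,v]_r) = [\phi(u), \phi(v)]_\mathcal{G}$; that is, $\phi$ intertwines the candidate bracket $[~,~]_r$ on $\mathfrak{h}$ with the restriction of $[~,~]_\mathcal{G}$ to $\mathrm{Gr}(r)$.

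Since $\phi$ is a linear isomorphism and the restriction of $[~,~]_\mathcal{G}$ to the subalgebra $\mathrm{Gr}(r)$ satisfies the Leibniz identity (\ref{leib-iden}), pulling that identity back along $\phi^{-1}$ shows that $[~,~]_r$ satisfies (\ref{leib-iden}) on $\mathfrak{h}$, so $(\mathfrak{h}, [~,~]_r) = \mathfrak{h}_r$ is a Leibniz algebra, isomorphic to $\mathrm{Gr}(r)$ via $\phi$. I do not expect a genuine obstacle here: the only content is the bookkeeping of the eight components of $[~,~]_\mathcal{G}$ and the observation that the deformation-map condition is precisely what forces the bracket of two graph elements to land back in the graph. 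Alternatively one could verify (\ref{leib-iden}) for $[~,~]_r$ directly, expanding both sides and invoking the five structure equations $\llbracket \mu, \widetilde{\theta} \rrbracket_\mathsf{B} = 0$ through $\llbracket \nu, \widetilde{\eta} \rrbracket_\mathsf{B} = 0$ together with (\ref{defor-map-iden}); but the isomorphism argument bypasses this computation entirely.
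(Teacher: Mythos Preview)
Your proposal is correct and follows exactly the paper's approach: the paper proves this proposition in one sentence (placed just before the statement), observing that $\mathfrak{h}$ is linearly isomorphic to $\mathrm{Gr}(r)$ via $u \leftrightsquigarrow (r(u),u)$ and hence inherits the Leibniz structure from the subalgebra $\mathrm{Gr}(r)$. Your write-up simply spells out the transported bracket and verifies it matches $[~,~]_r$, which is the natural elaboration of that sentence.
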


Let the proto-twilled Leibniz algebra structure $[~,~]_\mathcal{G}$ be given by (in terms of bidegree components)
\begin{align*}
    [~,~]_\mathcal{G} = \widetilde{\theta} + \mu + \nu + \widetilde{\eta},
\end{align*}
where $\mu$ and $\nu$ are described in (\ref{mu-nu1}) and (\ref{mu-nu2}).
Let $r: \mathfrak{h} \rightarrow \mathfrak{g}$ be any linear map (not necessarily a deformation map). Then it has been shown in \cite[Proposition 3.19]{tang-sheng} that the element
\begin{align*}
    \Omega_r = \theta_r + \mu_r + \nu_r + \eta_r \in C^{2|-1} \oplus C^{1|0} \oplus C^{0|1} \oplus C^{-1|2} = \mathrm{Hom} (\mathcal{G}^{\otimes 2}, \mathcal{G})
\end{align*}
defines a new proto-twilled Leibniz algebra structure, where
\begin{align}
    \theta_r =~& \widetilde{\theta}, \\
    \mu_r =~& \mu + \llbracket \widetilde{\theta}, \widetilde{r} \rrbracket_\mathsf{B},\\
    \nu_r =~& \nu + \llbracket \mu, \widetilde{r} \rrbracket_\mathsf{B} + \frac{1}{2} \llbracket \llbracket \widetilde{\theta} , \widetilde{r} \rrbracket_\mathsf{B}, \widetilde{r} \rrbracket_\mathsf{B}, \label{define-nu-r}\\
    \eta_r =~& \widetilde{\eta} + \llbracket \nu, \widetilde{r} \rrbracket_\mathsf{B} + \frac{1}{2} \llbracket \llbracket \mu, \widetilde{r} \rrbracket_\mathsf{B}, \widetilde{r} \rrbracket_\mathsf{B} + \frac{1}{6} \llbracket \llbracket \llbracket \widetilde{\theta}, \widetilde{r} \rrbracket_\mathsf{B}, \widetilde{r} \rrbracket_\mathsf{B}, \widetilde{r} \rrbracket_\mathsf{B}.
\end{align}
Here $\widetilde{r} \in \mathrm{Hom} (\mathcal{G}, \mathcal{G})$ is the horizontal lift of $r$ given by $\widetilde{r} (x, u) = (r(u), 0)$, for $(x, u) \in \mathcal{G} = \mathfrak{g} \oplus \mathfrak{h}$. As a consequence, we get the following result.

\begin{thm}
Let $( \mathcal{G} = \mathfrak{g} \oplus \mathfrak{h}, [~,~]_\mathcal{G})$ be a proto-twilled Leibniz algebra and $r : \mathfrak{h} \rightarrow \mathfrak{g}$ be a deformation map. Then $(\mathcal{G} = \mathfrak{g} \oplus \mathfrak{h} , [~,~]_r)$ is a quasi-twilled Leibniz algebra, where for $x , y \in \mathfrak{g}$ and $u, v \in \mathfrak{h}$,
\begin{align*}
    [ (x, 0), (y, 0)]_r :=~& \big(  [x,y]_\mathfrak{g} - r (\theta (x, y)) ~ \! , ~ \! \theta (x, y) \big),\\
    [(0, u), (0, v)]_r :=~& \big( 0, [u, v]_\mathfrak{h} + \rho^L ( r (u), v) + \rho^R (u, r(v)) + \theta ( r(u), r (v))   \big), \\
    [(x, 0), (0, u)]_r :=~& \big( \psi^R (x, u) + [ x, r(u)]_\mathfrak{g} - r ( \rho^L (x, u) + \theta (x, r (u))) ~ \! , ~ \! \rho^L (x, u) + \theta (x, r (u))   \big),\\
    [(0, u), (x, 0)]_r :=~& \big( \psi^L (u, x) + [r (u), x]_\mathfrak{g} - r ( \rho^R (u, x) + \theta ( r(u), x)) ~ \! , ~ \! \rho^R (u, x) + \theta ( r(u), x)  \big).
\end{align*}
\end{thm}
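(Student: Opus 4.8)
The plan is to build on the fact, recalled just above from \cite[Proposition 3.19]{tang-sheng}, that the element $\Omega_r = \theta_r + \mu_r + \nu_r + \eta_r$ defines a proto-twilled Leibniz algebra structure on $\mathcal{G} = \mathfrak{g} \oplus \mathfrak{h}$ for \emph{any} linear map $r : \mathfrak{h} \to \mathfrak{g}$, not merely for a deformation map. Setting $[~,~]_r := \Omega_r$, this already supplies the Leibniz identity for free, so the two remaining tasks are (i) to read off the four displayed component formulas for $[~,~]_r$, and (ii) to show that the deformation-map hypothesis is exactly what upgrades the proto-twilled structure to a quasi-twilled one, i.e.\ makes $\mathfrak{h}$ a Leibniz subalgebra.

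The cleanest route to (i) is to recognize $[~,~]_r$ as the transport of $[~,~]_\mathcal{G}$ along the linear isomorphism $\Phi : \mathcal{G} \to \mathcal{G}$, $\Phi(x, u) = (x + r(u), u)$, whose inverse is $\Phi^{-1}(x, u) = (x - r(u), u)$; concretely $[a, b]_r = \Phi^{-1} \big( [\Phi(a), \Phi(b)]_\mathcal{G} \big)$. That this conjugated bracket agrees with the iterated-Balavoine-bracket expression for $\Omega_r$ is a matter of matching bidegree components, using that $\widetilde{r}(x,u) = (r(u),0)$ is the infinitesimal generator of $\Phi$. Granting this, I would evaluate $[\Phi(\cdot), \Phi(\cdot)]_\mathcal{G}$ on the four input types $(x,0),(y,0)$; $(0,u),(0,v)$; $(x,0),(0,u)$; $(0,u),(x,0)$ and apply $\Phi^{-1}$. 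For instance $[\Phi(x,0), \Phi(0,u)]_\mathcal{G} = [(x,0),(r(u),u)]_\mathcal{G} = \big( [x, r(u)]_\mathfrak{g} + \psi^R(x,u) ~ \!, ~ \! \rho^L(x,u) + \theta(x, r(u)) \big)$, since the lifted maps $\psi^L(0, \cdot)$ and $\eta(0,\cdot)$ vanish; applying $\Phi^{-1}$ subtracts $r$ of the $\mathfrak{h}$-component and reproduces exactly the displayed formula for $[(x,0),(0,u)]_r$. The remaining three cases are identical bookkeeping.

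For (ii), I would observe that $\Phi$ carries $\{0\} \oplus \mathfrak{h}$ onto the graph $\mathrm{Gr}(r) = \{(r(u), u) : u \in \mathfrak{h}\}$, which by the graph characterization established just above is a subalgebra of $(\mathcal{G}, [~,~]_\mathcal{G})$ precisely because $r$ is a deformation map. Since $\Phi$ is a Leibniz isomorphism from $(\mathcal{G}, [~,~]_r)$ onto $(\mathcal{G}, [~,~]_\mathcal{G})$, it follows that $\mathfrak{h} = \{0\} \oplus \mathfrak{h}$ is a subalgebra of $(\mathcal{G}, [~,~]_r)$, which is the defining property of a quasi-twilled Leibniz algebra. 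This is also visible directly: the $\mathfrak{g}$-component of $[(0,u),(0,v)]_r = \Phi^{-1}[(r(u),u),(r(v),v)]_\mathcal{G}$ equals
\begin{align*}
[r(u), r(v)]_\mathfrak{g} + \psi^R(r(u), v) + \psi^L(u, r(v)) + \eta(u,v) - r\big( [u,v]_\mathfrak{h} + \rho^L(r(u),v) + \rho^R(u, r(v)) + \theta(r(u), r(v)) \big),
\end{align*}
which vanishes by the deformation-map identity (\ref{defor-map-iden}), leaving $[(0,u),(0,v)]_r = (0, [u,v]_r)$ with $[u,v]_r$ as in Proposition \ref{ind-leib}.

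The one step requiring genuine care is the identification in (i) of the conjugated bracket with $\Omega_r$: one must track the Koszul signs and verify that the successive brackets $\llbracket \widetilde{\theta}, \widetilde{r} \rrbracket_\mathsf{B}$, $\llbracket \mu, \widetilde{r} \rrbracket_\mathsf{B}$, and their higher iterates assemble into the expansion of $\Phi^{-1} \circ [~,~]_\mathcal{G} \circ (\Phi \otimes \Phi)$ in powers of $r$. If one instead takes $[~,~]_r := \Phi^{-1}[\Phi\cdot,\Phi\cdot]_\mathcal{G}$ as the working definition, even this subtlety disappears, since the Leibniz identity is then inherited along the bijection $\Phi$ automatically; everything else is routine, the four component evaluations being mechanical and the quasi-twilled conclusion being immediate from the vanishing of the $\mathfrak{g}$-component in the $\mathfrak{h} \otimes \mathfrak{h}$ case.
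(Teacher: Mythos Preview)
Your argument is correct and takes a genuinely different route from the paper. The paper works entirely inside the Balavoine formalism: it evaluates $\eta_r = \widetilde{\eta} + \llbracket \nu, \widetilde{r} \rrbracket_\mathsf{B} + \tfrac{1}{2} \llbracket \llbracket \mu, \widetilde{r} \rrbracket_\mathsf{B}, \widetilde{r} \rrbracket_\mathsf{B} + \tfrac{1}{6} \llbracket \llbracket \llbracket \widetilde{\theta}, \widetilde{r} \rrbracket_\mathsf{B}, \widetilde{r} \rrbracket_\mathsf{B}, \widetilde{r} \rrbracket_\mathsf{B}$ on $((0,u),(0,v))$ term by term, observes that the resulting expression is precisely the deformation-map identity (\ref{defor-map-iden}) and hence vanishes, and then evaluates $\widetilde{\theta} + \mu_r + \nu_r$ on each of the four input types by further direct Balavoine-bracket computations. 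Your approach instead recognizes $\Omega_r$ as the transport $\Phi^{-1} \circ \Omega \circ (\Phi \otimes \Phi)$ along $\Phi = \mathrm{Id} + \widetilde{r}$ (this is exactly right, since $\widetilde{r}^2 = 0$ so the exponential series for $e^{[\,\cdot\,,\widetilde{r}]_\mathsf{B}}\Omega$ truncates and matches the conjugation formula), which buys you the Leibniz identity for free, reduces the four component formulas to one-line substitutions, and makes the quasi-twilled conclusion conceptually transparent via the fact that $\Phi$ carries $\mathfrak{h}$ isomorphically onto the subalgebra $\mathrm{Gr}(r)$. The paper's approach has the advantage of staying within the graded-Lie machinery already set up and exhibiting directly that $\eta_r = 0$ is the obstruction, which feeds into the later identification of $\nu_r$ with the induced representation; your approach is more geometric and shorter, but the ``one step requiring genuine care'' you flag (matching conjugation with the iterated bracket expansion) is real and would need the explicit check above to be complete if you insist on the Balavoine definition of $\Omega_r$ rather than your alternative definition via $\Phi$.
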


\begin{proof}
It follows from the above discussions that $( \mathcal{G} = \mathfrak{g} \oplus \mathfrak{h}, \Omega_r = [~,~]_r)$ is a proto-twilled Leibniz algebra, for any linear map $r : \mathfrak{h} \rightarrow \mathfrak{g}$. However, by a direct calculation, we get that
\begin{align*}
    \eta_r ((0, u) , (0, v)) =~& \big( \widetilde{\eta} + \llbracket \nu, \widetilde{r} \rrbracket_\mathsf{B} + \frac{1}{2} \llbracket \llbracket \mu, \widetilde{r} \rrbracket_\mathsf{B}, \widetilde{r} \rrbracket_\mathsf{B} + \frac{1}{6} \llbracket \llbracket \llbracket \widetilde{\theta}, \widetilde{r} \rrbracket_\mathsf{B}, \widetilde{r} \rrbracket_\mathsf{B}, \widetilde{r} \rrbracket_\mathsf{B} \big) ((0, u), (0, v)) \\
    =~& \big( \eta (u, v)  - r ([u, v]_\mathfrak{h}) + \psi^R (r (u), v) + \psi^L (u, r (v)) \\
     & \quad +  [r (u), r(v)]_\mathfrak{g} - r (    \rho^L (r(u), v) + \rho^R (u, r (v)) ) - r \big(  \theta (r (u), r(v))  \big) ~ \! , ~ \! 0  \big),
\end{align*}
for $u, v \in \mathfrak{h}$. Thus, $\eta_r$ vanishes identically if $r$ is a deformation map. Hence, in this case, we have
\begin{align*}
    \Omega_r = \widetilde{\theta} + \mu_r + \nu_r \in C^{2 |-1} \oplus C^{1|0} \oplus C^{0|1}
\end{align*}
which equivalently says that $(\mathfrak{g} \oplus \mathfrak{h}, [~,~]_r)$ is a quasi-twilled Leibniz algebra. In particular, we have
\begin{align*}
    [(x, 0), (y, 0)]_r = \Omega_r ((x, 0), (y, 0)) =~& (\widetilde{\theta} + \mu_r) ((x, 0), (y, 0)) \\
    =~& ( \widetilde{\theta} + \mu + \llbracket \widetilde{\theta}, \widetilde{r} \rrbracket_\mathsf{B} ) ((x, 0), (y, 0)) \\
    =~& \big(  [x, y]_\mathfrak{g} - r ( \theta (x, y)) ~ \! , ~ \! \theta (x, y)   \big),
\end{align*}
\begin{align*}
    [ (0, u), (0, v) ]_r = \Omega_r ((0, u), (0, v)) =~& \nu_r ((0, u), (0, v)) \\
    =~& \big( \nu + \llbracket \mu, \widetilde{r} \rrbracket_\mathsf{B} + \frac{1}{2} \llbracket \llbracket \widetilde{\theta} , \widetilde{r} \rrbracket_\mathsf{B}, \widetilde{r} \rrbracket_\mathsf{B} \big)  ((0, u), (0, v)) \\
    =~& (0 ~ \! ,  [u, v]_\mathfrak{h} + \rho^L ( r (u), v) + \rho^R (u, r(v)) + \theta ( r(u), r (v)) ),
\end{align*}
\begin{align*}
    [(x, 0), (0, u)]_r =~& \Omega_r ((x, 0), (0, u)) \\ =~& (\mu_r + \nu_r) ((x, 0), (0, u)) \\
    =~& \big( \mu + \nu + \llbracket \mu + \widetilde{\theta} , \widetilde{r} \rrbracket_\mathsf{B} + \frac{1}{2} \llbracket \llbracket \widetilde{\theta}, \widetilde{r} \rrbracket_\mathsf{B}, \widetilde{r} \rrbracket_\mathsf{B}   \big) ((x, 0), (0, u)) \\
    =~& \big( \psi^R (x, u) + [ x, r(u)]_\mathfrak{g} - r ( \rho^L (x, u) + \theta (x, r (u))) ~ \! , ~ \! \rho^L (x, u) + \theta (x, r (u))   \big),
\end{align*}
\begin{align*}
    [ (0, u), (x, 0)]_r =~& \Omega_r ((0, u), (x, 0)) \\
    =~& (\mu_r + \nu_r) ((0, u) , (x, 0)) \\
    =~& \big( \mu + \nu + \llbracket \mu + \widetilde{\theta} , \widetilde{r} \rrbracket_\mathsf{B} + \frac{1}{2} \llbracket \llbracket \widetilde{\theta}, \widetilde{r} \rrbracket_\mathsf{B}, \widetilde{r} \rrbracket_\mathsf{B}   \big)  ((0, u) , (x, 0)) \\
    =~& \big( \psi^L (u, x) + [r (u), x]_\mathfrak{g} - r ( \rho^R (u, x) + \theta ( r(u), x)) ~ \! , ~ \! \rho^R (u, x) + \theta ( r(u), x)  \big).
\end{align*}
This completes the proof.
\end{proof}

\begin{remark}\label{remark-ind-rep}
(i) It follows from the above theorem that the maps $\psi^L_r: \mathfrak{h} \times \mathfrak{g} \rightarrow \mathfrak{g}$ and $\psi^R_r: \mathfrak{g} \times \mathfrak{h} \rightarrow \mathfrak{g}$ defined by
\begin{align}
    \psi^L_r (u, x) :=~& \psi^L (u, x) + [ r(u), x]_\mathfrak{g} - r \big(   \rho^R (u, x) + \theta ( r(u), x) \big), \label{ind-left}\\
    \psi^R_r (x, u) :=~& \psi^R (x, u) + [x, r (u)]_\mathfrak{g} - r \big(  \rho^L (x, u) + \theta (x, r (u)) \big) \label{ind-right}
\end{align}
makes the triple $(\mathfrak{g}, \psi^L_r , \psi^R_r)$ into a representation of the induced Leibniz algebra $\mathfrak{h}_r$.

(ii) With the above notations, we have
\begin{align*}
    \nu_r ((x, u), (y, v)) =~& \nu_r \big(  (x, 0) + (0, u) , (y,0) + (0, v)  \big) \\
    =~& \nu_r ((x, 0), (0, v)) + \nu_r ((0, u), (y, 0)) + \nu_r ((0, u), (0, v)) \\
    =~& \big( \psi^R_r (x, v) + \psi^L_r (u, y) ~ \! , ~ \! [u, v]_r   \big)
\end{align*}
which implies that $\nu_r = \widetilde{[~,~]_r} + \widetilde{\psi^L_r} + \widetilde{\psi^R_r}$.
\end{remark}

Let $(\mathcal{G} = \mathfrak{g} \oplus \mathfrak{h}, [~,~]_\mathcal{G})$ be a proto-twilled Leibniz algebra and $r : \mathfrak{h} \rightarrow \mathfrak{g}$ be a deformation map. Then we have seen in Proposition \ref{ind-leib} that $ \mathfrak{h}_r = (\mathfrak{h}, [~, ~]_r)$ is a Leibniz algebra. Moreover, Remark \ref{remark-ind-rep}(i) says that the triple $(\mathfrak{g}, \psi^L_r, \psi^R_r)$ is a representation of the Leibniz algebra $\mathfrak{h}_r.$ Hence we may consider the Loday-Pirashvili cochain complex of the Leibniz algebra $\mathfrak{h}_r$ with coefficients in the above representation. Explicitly, for each $n \geq 0$, we let $C^n (r) := \mathrm{Hom} (\mathfrak{h}^{\otimes n}, \mathfrak{g})$ and define a map $\delta^r_\mathrm{Leib} : C^n (r) \rightarrow C^{n+1} (r)$ by
\begin{align}
    \delta^r_\mathrm{Leib} (f) (u_1, \ldots, u_{n+1}) =~& \sum_{i=1}^n (-1)^{i+1} ~ \! \psi^L_r (u_i, f ( u_1, \ldots, \widehat{ u_i}, \ldots, u_{n+1} ) ) + (-1)^{n+1} ~ \! \psi_r^R ( f (u_1, \ldots, u_n), u_{n+1} ) \nonumber \\
    ~& + \sum_{ 1 \leq i < j \leq n+1} (-1)^i ~ \! f ( u_1, \ldots, \widehat{ u_i} , \ldots, u_{j-1}, [u_i, u_j]_r, u_{j+1}, \ldots, u_{n+1}), \label{delta-r}
\end{align}
for $f \in C^n (r)$ and $u_1, \ldots, u_{n+1} \in \mathfrak{h}$.
Then we have $(\delta^r_\mathrm{Leib})^2 = 0$. The cohomology groups of the cochain complex $\{ C^\bullet (r), \delta^r_\mathrm{Leib} \}$ are called the {\bf cohomology groups of the deformation map $r$}.

\medskip

    The cohomologies of Leibniz algebra homomorphisms \cite{mandal}, derivations \cite{das-leib-hrs}, crossed homomorphisms \cite{li-wang}, Rota-Baxter operators of any weight \cite{tang-sheng-zhou} \cite{das-leib-pub}, modified Rota-Baxter operators \cite{saha}, twisted Rota-Baxter operators \cite{das-guo}, Reynolds operators \cite{das-guo} on Leibniz algebras and embedding tensors \cite{sheng-embed} on Lie algebras are extensively studied in last few years. The cohomologies for all these operators are defined by the Loday-Pirashvili cohomology of the induced Leibniz algebras (induced by the respective operators) with coefficients in suitable representation. We have already observed that all these operators can be seen as deformation maps in appropriate proto-twilled Leibniz algebras. Our cohomology of a deformation map in a general proto-twilled Leibniz algebra unifies the existing cohomologies of all these operators. 
    
    (i) In particular, as a new result, we obtain the cohomology of a deformation map in a given matched pair of Leibniz algebras. More precisely, let $(\mathcal{G} = \mathfrak{g} \oplus \mathfrak{h}, [~,~]_\mathcal{G})$ be a matched pair of Leibniz algebras and $r : \mathfrak{h} \rightarrow \mathfrak{g}$ be a deformation map (cf. Example \ref{agore-exam}). Then the vector space $\mathfrak{h}$ inherits a Leibniz algebra structure with the bracket
    \begin{align*}
        [u, v]_r := [u, v]_\mathfrak{h} + \rho^L ( r(u), v) + \rho^R (u, r(v)), \text{ for } u, v \in \mathfrak{h}.
    \end{align*}
    Moreover, the Leibniz algebra $\mathfrak{h}_r = (\mathfrak{h}, [~,~]_r)$ has a representation on the vector space $\mathfrak{g}$ with the left and right action maps $\psi^L_r : \mathfrak{h} \times \mathfrak{g} \rightarrow \mathfrak{g}$ and $\psi^R_r : \mathfrak{g} \times \mathfrak{h} \rightarrow \mathfrak{g}$ that are respectively given by
    \begin{align*}
        \psi^L_r (u, x) = \psi^L (u, x) + [r (u), x]_\mathfrak{g} - r (\rho^R (u, x)) ~~~\text{ and } ~~~ \psi^R_r (x, u) = \psi^R (x, u) + [x, r (u)]_\mathfrak{g} -  r (\rho^L (x, u)),
    \end{align*}
    for $u \in \mathfrak{h}$ and $x \in \mathfrak{g}$. The Loday-Pirashvili cohomology groups of the Leibniz algebra $\mathfrak{h}_r$ with coefficients in the above representation $(\mathfrak{g}, \psi^L_r, \psi^R_r)$ are defined to be the cohomology groups of the deformation map $r$ in the given matched pair of Leibniz algebras.

    (ii) Let $(\mathfrak{g}, [~,~]_\mathfrak{g})$ be a Leibniz algebra and $s$ be a classical Leibniz r-matrix. Then we have seen in Example \ref{exam-relative} that the map $s^\sharp : \mathfrak{g}^* \rightarrow \mathfrak{g}$ is a deformation map in the semidirect product Leibniz algebra $(\mathfrak{g} \oplus \mathfrak{g}^*, [~,~]_\ltimes)$. As a consequence, the vector space $\mathfrak{g}^*$ inherits a Leibniz algebra structure with the bracket
    \begin{align*}
        [\alpha, \beta]_{s^{\sharp}} := coad^L (s^{\sharp} (\alpha), \beta) + coad^R (\alpha, s^\sharp (\beta)), \text{ for } \alpha, \beta \in \mathfrak{g}^*.
    \end{align*}
    Moreover, there is a representation of the Leibniz algebra $(\mathfrak{g}^*, [~,~]_{s^\sharp})$ on the vector space $\mathfrak{g}$ with the action maps
    \begin{align*}
        \psi^L_{s^\sharp} (\alpha, x) := [s^\sharp (\alpha), x]_\mathfrak{g} - s^\sharp (coad^R (\alpha, x)) \quad \text{ and } \quad \psi^R_{s^\sharp} (x, \alpha) := [x, s^\sharp (\alpha)]_\mathfrak{g} - s^\sharp (coad^L (x, \alpha)),
    \end{align*}
    for $\alpha \in \mathfrak{g}^*$ and $x \in \mathfrak{g}$. The corresponding Loday-Pirashvili cohomology is defined to be cohomology of the classical Leibniz r-matrix $s$.

    \medskip

\section{Maurer-Cartan characterization and the governing algebra of a deformation map}\label{sec5}

Given a proto-twilled Leibniz algebra, here we will construct a curved $L_\infty$-algebra whose Maurer-Cartan elements are precisely deformation maps. This curved $L_\infty$-algebra is called the {\em controlling algebra} for deformation maps. In particular, we get the Maurer-Cartan characterizations of modified Rota-Baxter operators, twisted Rota-Baxter operators and Reynolds operators on a Leibniz algebra. Finally, given a fixed deformation map $r$, we will construct the {\em governing algebra} that governs the linear deformations of the operator $r$.

Let $(\mathcal{G} = \mathfrak{g} \oplus \mathfrak{h}, [~,~]_\mathcal{G})$ be a proto-twilled Leibniz algebra. Consider the Balavoine's graded Lie algebra 
\begin{align*}
    \mathfrak{B} = \big(   \oplus_{n=0}^\infty \mathrm{Hom} (  \mathcal{G}^{\otimes n+1}, \mathcal{G}   ), \llbracket ~,~ \rrbracket_\mathsf{B}  \big)
\end{align*}
associated to the vector space $\mathcal{G} = \mathfrak{g} \oplus \mathfrak{h}$. Note that the Leibniz bracket $[~,~]_\mathcal{G}$ corresponds to an element 
\begin{align*}
\Omega = \widetilde{\theta} + \mu + \nu + \widetilde{\eta} ~~ \text{ satisfying } ~~  \llbracket \Omega, \Omega \rrbracket_\mathsf{B} = 0, \text{where } \mu  \text{ and } \nu \text{  are respectively given in } (\ref{mu-nu1}) \text{ and } (\ref{mu-nu2}).
\end{align*}
Let $\mathfrak{a} = \oplus_{n=0}^\infty \mathrm{Hom} (\mathfrak{h}^{\otimes n+1}, \mathfrak{g}) = \oplus_{n=0}^\infty C^{ -1|n+1}$. Then it follows from Lemma \ref{lemma-tang} that $\mathfrak{a} \subset \mathfrak{B}$ is an abelian graded Lie subalgebra. If $P: \mathfrak{B} \rightarrow \mathfrak{a}$ is the projection onto the subspace $\mathfrak{a}$ then we have
\begin{align*}
    (\mathrm{ker ~ \!} P)_n = C^{n+1 | -1} \oplus C^{n|0} \oplus \cdots \oplus C^{0|n}  = \mathcal{Q}_n.
\end{align*}
Hence $\mathrm{ker ~ \! } P = \oplus_{n=0}^\infty (\mathrm{ker ~ \! } P)_n = \oplus_{n=0}^\infty \mathcal{Q}_n$ is a graded Lie subalgebra 
of $\mathfrak{B}$ (see Proposition \ref{prop-mqr}). Thus, the quadruple $(\mathfrak{B}, \mathfrak{a}, P, \Omega)$ is a curved $V$-data. 

\begin{thm}\label{thm-mc-operator}
    Let $(\mathcal{G} = \mathfrak{g} \oplus \mathfrak{h}, [~, ~]_\mathcal{G})$ be a proto-twilled Leibniz algebra.
    \begin{itemize}
        \item[(i)] Then the pair $\big(  \mathfrak{a} = \oplus_{n=0}^\infty \mathrm{Hom} ( \mathfrak{h}^{\otimes n+1}, \mathfrak{g}), \{ l_k \}_{k=0}^\infty \big)$ is a curved $L_\infty$-algebra, where for $f, g, h \in \mathfrak{a}$,
        \begin{align*}
            l_0 =~& \eta, \\
            l_1 (f) =~& \llbracket \nu, f \rrbracket_\mathsf{B}, \\
            l_2 (f, g) =~& \llbracket \llbracket \mu, f \rrbracket_\mathsf{B} , g \rrbracket_\mathsf{B},\\
            l_3 (f,g,h) =~& \llbracket \llbracket \llbracket \widetilde{\theta}, f \rrbracket_\mathsf{B} , g \rrbracket_\mathsf{B} , h \rrbracket_\mathsf{B},\\
            l_k =~& 0, \text{ for } k \geq 4.
        \end{align*}
        \item[(ii)] A linear map $r: \mathfrak{h} \rightarrow \mathfrak{g}$ is a deformation map in the given proto-twilled Leibniz algebra if and only if $r \in \mathfrak{a}_0 = \mathrm{Hom} (\mathfrak{h}, \mathfrak{g})$ is a Maurer-Cartan element of the above curved $L_\infty$-algebra $(\mathfrak{a}, \{ l_k \}_{k=0}^\infty)$.
    \end{itemize}
\end{thm}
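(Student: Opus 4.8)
The plan is to derive both parts as consequences of Voronov's construction (Theorem \ref{thm-voro}) applied to the curved $V$-data $(\mathfrak{B}, \mathfrak{a}, P, \Omega)$ verified just above, the only genuine work being to make the abstract structure maps explicit through a bidegree count. Throughout, the decisive feature is that $\mathfrak{a} = \oplus_{n=0}^\infty C^{-1|n+1}$ is concentrated in $\mathfrak{g}$-slot index $-1$, and that $P$ is nonzero only on this index.

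For part (i), I would start from Voronov's formulas $l_0 = P(\Omega)$ and $l_k(a_1,\ldots,a_k) = P\,\llbracket\cdots\llbracket\llbracket\Omega, a_1\rrbracket_\mathsf{B}, a_2\rrbracket_\mathsf{B},\ldots, a_k\rrbracket_\mathsf{B}$ for $a_i \in \mathfrak{a}$. Since $\Omega = \widetilde\theta + \mu + \nu + \widetilde\eta$ splits into the bidegree components $C^{2|-1}, C^{1|0}, C^{0|1}, C^{-1|2}$, the term $l_0 = P(\Omega)$ retains only the component of $\Omega$ in $\mathfrak{a}$, namely $\widetilde\eta$, which under $C^{-1|2} \cong \mathrm{Hom}(\mathfrak{h}^{\otimes 2}, \mathfrak{g})$ is exactly $\eta$. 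The core of the argument is to track the $\mathfrak{g}$-slot index along the iterated bracket. By the bidegree additivity of Lemma \ref{lemma-tang}, bracketing any map against an element of $\mathfrak{a} \subset C^{-1|\bullet}$ lowers its $\mathfrak{g}$-slot index by exactly one. Hence, to reach index $-1$ (where alone $P$ survives), a summand originating from $\widetilde\theta$ (index $2$) must be bracketed three times, one originating from $\mu$ (index $1$) twice, one from $\nu$ (index $0$) once, and $\widetilde\eta$ (index $-1$) zero times. This selection rule picks out exactly one surviving summand at each stage, lying wholly in $\mathfrak{a}$ so that $P$ acts as the identity on it, and yields $l_1(f) = \llbracket \nu, f\rrbracket_\mathsf{B}$, $l_2(f,g) = \llbracket\llbracket\mu, f\rrbracket_\mathsf{B}, g\rrbracket_\mathsf{B}$ and $l_3(f,g,h) = \llbracket\llbracket\llbracket\widetilde\theta, f\rrbracket_\mathsf{B}, g\rrbracket_\mathsf{B}, h\rrbracket_\mathsf{B}$; a fourth bracket would push the index to $-2$, forcing $l_k = 0$ for $k \geq 4$. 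I would record this as a compact bidegree table rather than expand the permutation sums in the Balavoine bracket.

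For part (ii), because $l_k = 0$ for $k \geq 4$, the Maurer--Cartan equation for $r \in \mathfrak{a}_0 = \mathrm{Hom}(\mathfrak{h}, \mathfrak{g})$ collapses to
\[
l_0 + l_1(r) + \tfrac{1}{2!}\, l_2(r,r) + \tfrac{1}{3!}\, l_3(r,r,r) = 0.
\]
Substituting the explicit maps from part (i), with $\widetilde r \in C^{-1|1}$ the horizontal lift of $r$, this reads
\[
\widetilde\eta + \llbracket \nu, \widetilde r\rrbracket_\mathsf{B} + \tfrac{1}{2}\llbracket\llbracket\mu, \widetilde r\rrbracket_\mathsf{B}, \widetilde r\rrbracket_\mathsf{B} + \tfrac{1}{6}\llbracket\llbracket\llbracket\widetilde\theta, \widetilde r\rrbracket_\mathsf{B}, \widetilde r\rrbracket_\mathsf{B}, \widetilde r\rrbracket_\mathsf{B} = 0.
\]
The left-hand side is precisely the $C^{-1|2}$-component $\eta_r$ of the twisted structure $\Omega_r$ computed in Section \ref{sec4} (following \cite{tang-sheng}), and the coefficients $1, \tfrac12, \tfrac16$ line up term by term. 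I would then invoke the computation made there, where $\eta_r$ evaluated on $((0,u),(0,v))$ was shown to equal, under the horizontal identification,
\[
\eta(u,v) + [r(u), r(v)]_\mathfrak{g} + \psi^R(r(u), v) + \psi^L(u, r(v)) - r\big([u,v]_\mathfrak{h} + \rho^L(r(u), v) + \rho^R(u, r(v)) + \theta(r(u), r(v))\big).
\]
Its vanishing for all $u, v \in \mathfrak{h}$ is exactly the deformation-map identity (\ref{defor-map-iden}). Hence $r$ is a Maurer--Cartan element of $(\mathfrak{a}, \{l_k\}_{k=0}^\infty)$ if and only if $r$ is a deformation map.

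The main obstacle is the bookkeeping in part (i): one must be sure that at each bracketing stage exactly one bidegree component survives the projection, so that no cross-terms contribute and the rational coefficients produced by the Maurer--Cartan expansion agree with those defining $\eta_r$. Once the bidegree selection rule is established, everything else is formal, and part (ii) reduces to recognizing the Maurer--Cartan equation as the already-verified identity $\eta_r = 0$.
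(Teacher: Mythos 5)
Your proposal is correct and follows essentially the same route as the paper: both parts are obtained by applying Voronov's construction (Theorem \ref{thm-voro}) to the curved $V$-data $(\mathfrak{B}, \mathfrak{a}, P, \Omega)$, with the structure maps pinned down by the bidegree additivity of Lemma \ref{lemma-tang} (each bracket against an element of $\mathfrak{a}=\oplus_n C^{-1|n+1}$ drops the first index by one, so only one component of $\Omega$ survives the projection at each arity and everything vanishes for $k\geq 4$). The only cosmetic difference is in part (ii), where you identify the Maurer--Cartan sum with $\eta_r$ and cite the evaluation of $\eta_r((0,u),(0,v))$ from Section \ref{sec4}, whereas the paper re-runs that same term-by-term computation in place; the two are the same calculation.
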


\begin{proof}
(i) Since $(\mathfrak{B}, \mathfrak{a}, P, \Omega)$ is a curved $V$-data, it follows from Theorem \ref{thm-voro} that $(\mathfrak{a}, \{ l_k \}_{k=0}^\infty )$ is a curved $L_\infty$-algebra. The structure maps are precisely given by
\begin{align}
    l_0 =~& P (\Omega ) = \eta, \label{notation1}\\
    l_1 (f) =~& P \llbracket \Omega, f \rrbracket_\mathsf{B} = \llbracket \nu, f \rrbracket_\mathsf{B}, \\
    l_2 (f, g) =~& P \llbracket \llbracket \Omega, f \rrbracket_\mathsf{B} , g \rrbracket_\mathsf{B} = \llbracket \llbracket \mu, f \rrbracket_\mathsf{B} , g \rrbracket_\mathsf{B},\\
    l_3 (f, g, h ) =~&  P \llbracket \llbracket \llbracket \Omega, f \rrbracket_\mathsf{B} , g \rrbracket_\mathsf{B},  h \rrbracket_\mathsf{B} =  \llbracket \llbracket \llbracket \overline{\theta}, f \rrbracket_\mathsf{B} , g \rrbracket_\mathsf{B},  h \rrbracket_\mathsf{B}, \label{notation4}
\end{align}
and for any $k \geq 4$ and $f_1, \ldots, f_k \in \mathfrak{a}$,
\begin{align*}
    l_k (f_1, \ldots, f_k) = P \llbracket \cdots \llbracket \llbracket \Omega, f_1 \rrbracket_\mathsf{B}, f_2 \rrbracket_\mathsf{B}, \ldots, f_k \rrbracket_\mathsf{B} = 0 \quad(\text{follows from Lemma }\ref{lemma-tang}).
\end{align*}

    (ii) For any linear map $r \in \mathfrak{a}_0 = \mathrm{Hom}( \mathfrak{h} ,\mathfrak{g})$, we have
    \begin{align*}
        &\big( l_0 + \sum_{k=1}^\infty \frac{1}{k!} ~ \! l_k (r, \ldots, r)   \big) (u, v) \\
        &= l_0 (u, v) + l_1 (r) (u, v) + \frac{1}{2} l_2 (r,r) (u, v) + \frac{1}{6} l_3 (r, r, r) (u, v) \\
        &= \eta (u, v) + \llbracket \nu, r \rrbracket_\mathsf{B} (u, v) + \frac{1}{2} \llbracket \llbracket \mu , r \rrbracket_\mathsf{B}, r \rrbracket_\mathsf{B} (u, v) + \frac{1}{6} \llbracket \llbracket \llbracket \widetilde{\theta}, r \rrbracket_\mathsf{B}, r \rrbracket_\mathsf{B}, r \rrbracket_\mathsf{B} (u, v) \\
        &= \eta (u, v) + \big\{ - r ([u, v]_\mathfrak{h}) + \psi^R (r (u), v) + \psi^L (u, r (v))  \big\} \\
        & \qquad \quad + \big\{ [r (u), r(v)]_\mathfrak{g} - r (    \rho^L (r(u), v) + \rho^R (u, r (v)) )    \big\} - r \big(  \theta (r (u), r(v))  \big).
    \end{align*}
    This shows that $r$ is a Maurer-Cartan element of the curved $L_\infty$-algebra $(\mathfrak{a}, \{ l_k \}_{k=0}^\infty )$ if and only if $r$ is a deformation map in the given proto-twilled Leibniz algebra.
\end{proof}

Since the Maurer-Cartan elements of the curved $L_\infty$-algebra $\big(   \mathfrak{a} = \oplus_{n=0}^\infty \mathrm{Hom} ( \mathfrak{h}^{\otimes n+1}, \mathfrak{g}), \{ l_k \}_{k=0}^\infty    \big)$ are precisely deformation maps in the given proto-twilled Leibniz algebra, we call this curved $L_\infty$-algebra as the {\bf controlling algebra} for deformation maps.

By considering the proto-twilled Leibniz algebras given in Section \ref{sec3} and finding the controlling algebras for deformation maps on them, we get/recover the Maurer-Cartan characterizations of various well-known operators on Leibniz algebras.

\begin{prop}
    Let $(\mathfrak{g}, [~,~]_\mathfrak{g})$ and $(\mathfrak{h}, [~,~]_\mathfrak{h})$ be two Leibniz algebras. Then $\big(  \oplus_{n=0}^\infty \mathrm{Hom} ( \mathfrak{h}^{\otimes n+1}, \mathfrak{g}), \{ l_k \}_{k=1}^\infty \big)$ is an $L_\infty$-algebra, where $l_k = 0$ for $k \neq 1, 2$ and
    \begin{align*}
        l_1 (f) = \llbracket  \widetilde{ ~\! [~,~]_\mathfrak{h}} ~\! , ~\! f
        \rrbracket_\mathsf{B} ~~~ \text{ and } ~~~
        l_2 (f, g) = \llbracket \llbracket \widetilde{ ~\! [~,~]_\mathfrak{g}} ~\! , ~\!  f \rrbracket_\mathsf{B}, g \rrbracket_\mathsf{B},
    \end{align*}
    for $f \in \mathrm{Hom} (\mathfrak{h}^{\otimes m} , \mathfrak{g})$ and $g \in \mathrm{Hom} (\mathfrak{h}^{\otimes n} , \mathfrak{g})$. Moreover, a linear map $\varphi : \mathfrak{h} \rightarrow \mathfrak{g}$ is a Leibniz algebra homomorphism if and only if $\varphi \in \mathrm{Hom} (\mathfrak{h} , \mathfrak{g})$ is a Maurer-Cartan element of the above $L_\infty$-algebra.
\end{prop}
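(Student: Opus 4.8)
The plan is to obtain this proposition as an immediate specialization of Theorem \ref{thm-mc-operator} applied to the direct-product proto-twilled Leibniz algebra $(\mathfrak{g} \oplus \mathfrak{h}, [~,~]_\mathrm{dir})$ of Example \ref{dir-product}. First I would read off the bidegree components of the bracket $[~,~]_\mathrm{dir}$. Since $[(x,u),(y,v)]_\mathrm{dir} = ([x,y]_\mathfrak{g}, [u,v]_\mathfrak{h})$ carries no mixed contributions, comparing with the general decomposition of $[~,~]_\mathcal{G}$ forces $\rho^L = \rho^R = 0$, $\psi^L = \psi^R = 0$, $\theta = 0$ and $\eta = 0$. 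Consequently the bidegree components of $\Omega = [~,~]_\mathrm{dir}$ collapse to $\widetilde{\theta} = 0$ and $\widetilde{\eta} = 0$, while the defining formulas (\ref{mu-nu1}) and (\ref{mu-nu2}) reduce to $\mu = \widetilde{[~,~]_\mathfrak{g}}$ and $\nu = \widetilde{[~,~]_\mathfrak{h}}$.

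Next I would substitute these vanishings into the structure maps supplied by Theorem \ref{thm-mc-operator}(i). Because $l_0 = \eta = 0$, the resulting curved $L_\infty$-algebra has no curvature and is an honest $L_\infty$-algebra, which is exactly why the family is indexed from $k=1$. Because $\widetilde{\theta} = 0$, the cubic bracket $l_3(f,g,h) = \llbracket \llbracket \llbracket \widetilde{\theta}, f \rrbracket_\mathsf{B}, g \rrbracket_\mathsf{B}, h \rrbracket_\mathsf{B}$ vanishes identically, and $l_k = 0$ for $k \geq 4$ already holds. The two surviving brackets are $l_1(f) = \llbracket \nu, f \rrbracket_\mathsf{B} = \llbracket \widetilde{[~,~]_\mathfrak{h}}, f \rrbracket_\mathsf{B}$ and $l_2(f,g) = \llbracket \llbracket \mu, f \rrbracket_\mathsf{B}, g \rrbracket_\mathsf{B} = \llbracket \llbracket \widetilde{[~,~]_\mathfrak{g}}, f \rrbracket_\mathsf{B}, g \rrbracket_\mathsf{B}$, which are precisely the formulas asserted. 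That Lemma \ref{lemma-tang} keeps these brackets in the correct bidegree, hence inside $\mathfrak{a} = \oplus_n \mathrm{Hom}(\mathfrak{h}^{\otimes n+1}, \mathfrak{g})$, is already subsumed in Theorem \ref{thm-mc-operator}, so no separate degree bookkeeping is required.

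For the Maurer-Cartan statement I would chain two equivalences. Theorem \ref{thm-mc-operator}(ii) identifies a Maurer-Cartan element $\varphi \in \mathrm{Hom}(\mathfrak{h}, \mathfrak{g})$ of this $L_\infty$-algebra with a deformation map in $(\mathfrak{g} \oplus \mathfrak{h}, [~,~]_\mathrm{dir})$, and it was already observed (the Leibniz algebra homomorphisms example immediately following Definition \ref{defn-defor-map}) that a deformation map in the direct product is exactly a Leibniz algebra homomorphism $\varphi([u,v]_\mathfrak{h}) = [\varphi(u), \varphi(v)]_\mathfrak{g}$. Composing these yields the claimed correspondence; one may also verify it directly by noting that with $\eta=0$ and $l_3=0$ the Maurer-Cartan equation becomes $l_1(\varphi) + \tfrac{1}{2} l_2(\varphi,\varphi) = 0$, which unwinds to the homomorphism identity. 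Since every ingredient is already in place, there is essentially no genuine obstacle here; the only point demanding care is the correct identification of the vanishing components $\theta, \eta, \rho^{L/R}, \psi^{L/R}$ of $[~,~]_\mathrm{dir}$, as a slip there would propagate incorrectly into $\mu$ and $\nu$.
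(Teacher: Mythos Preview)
Your proposal is correct and follows precisely the paper's approach: the proposition is stated as an immediate specialization of Theorem \ref{thm-mc-operator} to the direct-product proto-twilled Leibniz algebra of Example \ref{dir-product}, with the identification of vanishing components $\theta, \eta, \rho^{L/R}, \psi^{L/R}$ and the observation that deformation maps there are exactly Leibniz algebra homomorphisms. Your verification of the details (in particular that $\mu = \widetilde{[~,~]_\mathfrak{g}}$, $\nu = \widetilde{[~,~]_\mathfrak{h}}$, and $l_0 = l_3 = 0$) is accurate.
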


\begin{prop}
    Let $(\mathfrak{g}, [~, ~]_\mathfrak{g})$ be a Leibniz algebra and $(V, \rho^L, \rho^R)$ be a representation of it. Then the pair $\big(  \oplus_{n=0}^\infty \mathrm{Hom} (\mathfrak{g}^{\otimes n+1}, V) , \{ l_k \}_{k=1}^\infty  \big)$ is an $L_\infty$-algebra, where $l_k = 0$ for $k \neq 1$ and
    \begin{align*}
        l_1 (f) (x_1, \ldots, x_{n+1}) =~& \sum_{i=1}^n (-1)^{i+n} ~ \! \rho^L ( x_i ,  f (x_1, \ldots, \widehat{ x_i} , \ldots, x_{n+1}) ) +  \rho^R (f (x_1, \ldots, x_n), x_{n+1} ) \\
   &+  \sum_{1 \leq i < j \leq n+1} (-1)^{i+n+1} ~ \! f (x_1, \ldots, \widehat{x_i}, \ldots, x_{j-1}, [x_i, x_j]_\mathfrak{g}, x_{j+1}, \ldots, x_{n+1}),
    \end{align*}
    for $f \in \mathrm{Hom} (\mathfrak{g}^{\otimes n}, V)$ and $x_1, \ldots, x_{n+1} \in \mathfrak{g}$.
Moreover, a linear map $D: \mathfrak{g} \rightarrow V$ is a derivation if and only if $D \in \mathrm{Hom} (\mathfrak{g}, V)$ is a Maurer-Cartan element.
\end{prop}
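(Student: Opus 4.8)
The plan is to deduce this proposition from Theorem \ref{thm-mc-operator} applied to one well-chosen proto-twilled Leibniz algebra, namely the semidirect product $(V \oplus \mathfrak{g}, [~,~]_\rtimes)$ of Example \ref{semi-proto}, whose bracket is recorded in (\ref{deri-proto}). Here $V$ plays the role of the first summand (the ``$\mathfrak{g}$'' of Section \ref{sec3}) and $\mathfrak{g}$ plays the role of the second summand (the ``$\mathfrak{h}$''), so that the controlling algebra $\oplus_n \mathrm{Hom}(\mathfrak{h}^{\otimes n+1}, \mathfrak{g})$ of Theorem \ref{thm-mc-operator} becomes exactly $\oplus_n \mathrm{Hom}(\mathfrak{g}^{\otimes n+1}, V)$. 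First I would compare (\ref{deri-proto}) with the general component expansion of a proto-twilled bracket. The comparison shows that every component valued in the second summand vanishes except the bracket $[~,~]_\mathfrak{g}$ (so the general map $\theta$ and the two actions into the second summand are zero), while every component valued in the first summand $V$ vanishes except the two maps $\psi^L = \rho^L$ and $\psi^R = \rho^R$ coming from the representation (so $\eta = 0$ and the bracket on $V$ is zero). In the notation of (\ref{mu-nu1})--(\ref{mu-nu2}) this says $\widetilde{\theta} = 0$, $\widetilde{\eta} = 0$ and $\mu = 0$, whence $\Omega = \nu$.

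Feeding these vanishings into Theorem \ref{thm-mc-operator}(i) is immediate: $l_0 = \eta = 0$, the bracket $l_2$ vanishes because $\mu = 0$, the bracket $l_3$ vanishes because $\widetilde{\theta} = 0$, and $l_k = 0$ for $k \geq 4$ by Lemma \ref{lemma-tang}. Hence only $l_1(f) = \llbracket \nu, f \rrbracket_\mathsf{B}$ survives, the curved term disappears, and the $L_\infty$-relations collapse to the single identity $l_1^2 = 0$; thus $\big(\oplus_n \mathrm{Hom}(\mathfrak{g}^{\otimes n+1}, V), l_1\big)$ is an $L_\infty$-algebra (in fact just a cochain complex), as claimed.

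The substantive computation is to expand $l_1(f) = \llbracket \nu, f \rrbracket_\mathsf{B}$ for $f \in \mathrm{Hom}(\mathfrak{g}^{\otimes n}, V)$ and identify it with the displayed formula. Writing $\llbracket \nu, f \rrbracket_\mathsf{B} = \nu \diamond f - (-1)^{n-1} f \diamond \nu$, the term $\nu \diamond f$ contributes the $\rho^R$-term (from inserting $f$ into the first slot of $\nu$) and, via the $(1, n-1)$-shuffles, the sum of $\rho^L$-terms, while $f \diamond \nu$ contributes the terms in which a bracket $[x_i, x_j]_\mathfrak{g}$ is fed back into $f$. Throughout one uses that the horizontal lifts $\widetilde{f}$ and $\widetilde{\nu}$ are supported only on the tensor components that force all arguments into the $\mathfrak{g}$-slots and the outputs into $V$. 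I expect the only real obstacle to be the sign bookkeeping: the Koszul prefactors $(-1)^{(i-1)(n-1)}$ and the shuffle signs $(-1)^\sigma$ in the diamond operation must be tracked precisely. The upshot is that $l_1$ equals $(-1)^{n-1}$ times the Loday--Pirashvili coboundary $\delta_\mathrm{Leib}$ of $\mathfrak{g}$ with coefficients in $(V, \rho^L, \rho^R)$, and this degree-dependent scalar is exactly what converts the familiar signs $(-1)^{i+1}$ and $(-1)^i$ into the stated signs $(-1)^{i+n}$ and $(-1)^{i+n+1}$ (and the factor $(-1)^{n+1}$ into $+1$ on the $\rho^R$-term).

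Finally, the Maurer--Cartan statement follows with no extra work. By Theorem \ref{thm-mc-operator}(ii) a linear map $D : \mathfrak{g} \to V$ is a Maurer--Cartan element of this controlling algebra if and only if it is a deformation map in $(V \oplus \mathfrak{g}, [~,~]_\rtimes)$, which, as recorded in the corresponding example of Section \ref{sec3}, is precisely the condition that $D$ be a derivation. Since $l_0 = 0$ and $l_k = 0$ for all $k \geq 2$, the Maurer--Cartan equation reduces to $l_1(D) = 0$; as $D$ has arity one the overall sign is $+1$, so $l_1(D) = \delta_\mathrm{Leib}(D)$ and the equation $l_1(D) = 0$ is exactly the derivation identity $D([x,y]_\mathfrak{g}) = \rho^L(x, D(y)) + \rho^R(D(x), y)$.
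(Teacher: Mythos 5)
Your proposal is correct and follows exactly the route the paper intends: the proposition is stated there without proof as a direct specialization of Theorem \ref{thm-mc-operator} to the semidirect product $(V \oplus \mathfrak{g}, [~,~]_\rtimes)$ of (\ref{deri-proto}), for which $\widetilde{\theta}=\widetilde{\eta}=\mu=0$ and $\Omega=\nu$, so that only $l_1(f)=\llbracket \nu, f\rrbracket_\mathsf{B}$ survives. Your identification $l_1(f)=(-1)^{n-1}\delta_\mathrm{Leib}(f)$ and the resulting sign conversion also match the paper's own Proposition on $l_1^r$ and the remark following the statement.
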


It follows from the above proposition that the map $l_1$ makes the graded space $\oplus_{n=1}^\infty \mathrm{Hom} (\mathfrak{g}^{\otimes n}, V) $ into a cochain complex. Note that this cochain complex is isomorphic to the Loday-Pirashvili cochain complex of the Leibniz algebra $(\mathfrak{g}, [~,~]_\mathfrak{g})$ with coefficients in the representation $(V, \rho^L, \rho^R)$.

\begin{prop}
     Let $(\mathfrak{g}, [~, ~]_\mathfrak{g})$ be a Leibniz algebra and $(V, \rho^L, \rho^R)$ be a representation of it. Then the pair $\big(  \oplus_{n=0}^\infty \mathrm{Hom} (V^{\otimes n+1}, \mathfrak{g}), \{ l_k \}_{k=1}^\infty  \big)$ is an $L_\infty$-algebra, where $l_k = 0$ for $k \neq 2$ and 
     \begin{align*}
         l_2 (f, g) = \llbracket \llbracket  \mu, f \rrbracket_\mathsf{B} , g \rrbracket_\mathsf{B},
     \end{align*}
     for $ f \in \mathrm{Hom} (V^{\otimes m}, \mathfrak{g})$ and $g \in \mathrm{Hom} (V^{\otimes n}, \mathfrak{g})$. A linear map $r: V \rightarrow \mathfrak{g}$ is a relative Rota-Baxter operator of weight $0$ if and only if $r \in \mathrm{Hom} (V, \mathfrak{g})$ is a Maurer-Cartan element.
\end{prop}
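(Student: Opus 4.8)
The plan is to obtain this statement as a direct specialization of Theorem \ref{thm-mc-operator} applied to the semidirect product proto-twilled Leibniz algebra $(\mathfrak{g} \oplus V, [~,~]_\ltimes)$ of Example \ref{semi-proto}, with $\mathfrak{h} = V$. First I would read off the structure components of this particular bracket. Comparing the formula $[(x,u),(y,v)]_\ltimes = ([x,y]_\mathfrak{g}, \rho^L(x,v) + \rho^R(u,y))$ against the general expansion of $[~,~]_\mathcal{G}$, one sees that $[~,~]_\mathfrak{h} = 0$, $\psi^L = \psi^R = 0$, $\theta = 0$ and $\eta = 0$, while $[~,~]_\mathfrak{g}$ and $\rho^L, \rho^R$ are the given data. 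Consequently, in the bidegree decomposition $\Omega = \widetilde{\theta} + \mu + \nu + \widetilde{\eta}$, the components $\widetilde{\theta}$ and $\widetilde{\eta}$ vanish, and by (\ref{mu-nu2}) the component $\nu$ also vanishes, so that $\Omega = \mu$ with $\mu$ given by (\ref{mu-nu1}).

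Next I would invoke Theorem \ref{thm-mc-operator}(i), whose abelian subalgebra is precisely $\mathfrak{a} = \oplus_{n=0}^\infty \mathrm{Hom}(V^{\otimes n+1}, \mathfrak{g})$, matching the graded space in the statement. Feeding the vanishing components into the structure maps there gives $l_0 = \eta = 0$, $l_1(f) = \llbracket \nu, f \rrbracket_\mathsf{B} = 0$, $l_3(f,g,h) = \llbracket \llbracket \llbracket \widetilde{\theta}, f \rrbracket_\mathsf{B}, g \rrbracket_\mathsf{B}, h \rrbracket_\mathsf{B} = 0$, and $l_k = 0$ for $k \geq 4$, leaving only $l_2(f, g) = \llbracket \llbracket \mu, f \rrbracket_\mathsf{B}, g \rrbracket_\mathsf{B}$. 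Since $l_0 = 0$, the resulting curved $L_\infty$-algebra is an honest $L_\infty$-algebra, exactly as asserted. One should also note, via Lemma \ref{lemma-tang}, the bidegree bookkeeping that guarantees $l_2$ indeed lands in $\mathfrak{a}$ and that the higher brackets vanish for degree reasons.

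Finally, for the Maurer-Cartan characterization I would specialize Theorem \ref{thm-mc-operator}(ii). With only $l_2$ surviving, the Maurer-Cartan equation for $r \in \mathrm{Hom}(V, \mathfrak{g})$ collapses to $\tfrac{1}{2} l_2(r, r) = 0$, i.e. $\llbracket \llbracket \mu, r \rrbracket_\mathsf{B}, r \rrbracket_\mathsf{B} = 0$. Evaluating this as in the proof of Theorem \ref{thm-mc-operator}(ii)—which produces the general deformation map identity (\ref{defor-map-iden})—and substituting $\psi^L = \psi^R = \theta = \eta = 0$ together with $[~,~]_\mathfrak{h} = 0$ reduces it to $[r(u), r(v)]_\mathfrak{g} = r(\rho^L(r(u), v) + \rho^R(u, r(v)))$, which is exactly the relative Rota-Baxter operator of weight $0$ condition recorded in Example \ref{exam-relative}.

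Since the whole argument is a specialization of the already-established Theorem \ref{thm-mc-operator}, no genuinely new obstacle arises; the only point requiring care is the correct identification of which structure constants of the semidirect product vanish, which in turn pins down which of the brackets $l_k$ survive. Everything else is a direct substitution into formulas already proved.
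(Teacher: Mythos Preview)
Your proposal is correct and follows exactly the route the paper intends: these propositions are stated in the paper as immediate specializations of Theorem \ref{thm-mc-operator} to the proto-twilled Leibniz algebras of Section \ref{sec3}, and for this one the relevant example is precisely the semidirect product of Example \ref{semi-proto} with the vanishing of $\nu$, $\widetilde{\theta}$ and $\widetilde{\eta}$ that you identify.
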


As a remark of the above proposition, we get that the shifted space $\oplus_{n=1}^\infty \mathrm{Hom} (V^{\otimes n}, \mathfrak{g})$ endowed with the bracket $\{ \! [ f, g ] \! \} = (-1)^m ~ \! l_2 (f, g) = (-1)^m \llbracket \llbracket  \mu, f \rrbracket_\mathsf{B}, g \rrbracket_\mathsf{B},$ for $ f \in \mathrm{Hom} (V^{\otimes m}, \mathfrak{g})$ and $g \in \mathrm{Hom} (V^{\otimes n}, \mathfrak{g})$, is a graded Lie algebra \cite{tang-sheng}. See \cite{tang-sheng-zhou} for the explicit description of this graded Lie bracket.

\begin{prop}
    Let $(\mathfrak{g}, [~,~]_\mathfrak{g})$ be a Leibniz algebra. Then $\big( \oplus_{n=0}^\infty \mathrm{Hom} ( \mathfrak{g}^{\otimes n+1}, \mathfrak{g}), \{ l_k \}_{k=0}^\infty  \big)$ is a curved $L_\infty$-algebra, where $l_k = 0$ for $k \neq 0, 2$ and $l_0 = [~,~]_\mathfrak{g}$ (the given Leibniz multiplication on $\mathfrak{g}$) and
    \begin{align*}
        l_2 (f, g) = \llbracket \llbracket \mu, f \rrbracket_\mathsf{B}, g \rrbracket_\mathsf{B},
    \end{align*}
    where $\mu^{1 | 0}$ is the element given by $\mu ((x', x) , (y', y) ) = ( [x', y']_\mathfrak{g} ~ \! , ~ \! [x', y]_\mathfrak{g} + [x, y']_\mathfrak{g})$, for $(x', x) , (y', y) \in \mathfrak{g} \oplus \mathfrak{g}$.
     Moreover, a linear map $r : \mathfrak{g} \rightarrow \mathfrak{g}$ is a modified Rota-Baxter operator on the Leibniz algebra $(\mathfrak{g}, [~,~]_\mathfrak{g})$ if and only if $r \in \mathrm{Hom} (\mathfrak{g}, \mathfrak{g})$ is a Maurer-Cartan element of this curved $L_\infty$-algebra.
\end{prop}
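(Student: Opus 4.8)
The plan is to obtain this statement as a direct specialization of Theorem \ref{thm-mc-operator} to the concrete proto-twilled Leibniz algebra $(\mathfrak{g} \oplus \mathfrak{g}, [~,~]_\mathrm{mod})$ of Example \ref{mod-proto}, where the first copy of $\mathfrak{g}$ plays the role of $\mathfrak{g}$ and the second copy plays the role of $\mathfrak{h}$, so that the relevant graded space $\oplus_{n \geq 0}\mathrm{Hom}(\mathfrak{h}^{\otimes n+1}, \mathfrak{g})$ becomes exactly $\oplus_{n \geq 0}\mathrm{Hom}(\mathfrak{g}^{\otimes n+1}, \mathfrak{g})$. First I would read off the bidegree components of $[~,~]_\mathrm{mod}$. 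Writing $(x,u),(y,v)\in\mathfrak{g}\oplus\mathfrak{g}$ (with $u,v$ in the second copy $\mathfrak{h}=\mathfrak{g}$), the bracket is $[(x,u),(y,v)]_\mathrm{mod}=([x,y]_\mathfrak{g}+[u,v]_\mathfrak{g}\,,\,[x,v]_\mathfrak{g}+[u,y]_\mathfrak{g})$, and comparing this with the general decomposition $[(x,u),(y,v)]_\mathcal{G}=([x,y]_\mathfrak{g}+\psi^R(x,v)+\psi^L(u,y)+\eta(u,v)\,,\,[u,v]_\mathfrak{h}+\rho^L(x,v)+\rho^R(u,y)+\theta(x,y))$ identifies $\eta=[~,~]_\mathfrak{g}$, $\rho^L=\rho^R=[~,~]_\mathfrak{g}$, together with $[~,~]_\mathfrak{h}=0$, $\psi^L=\psi^R=0$ and $\theta=0$.

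With these components in hand, the structure maps of Theorem \ref{thm-mc-operator}(i) collapse exactly as claimed. Since $l_0=\eta$ we get $l_0=[~,~]_\mathfrak{g}$. Because $[~,~]_\mathfrak{h}=0$ and $\psi^L=\psi^R=0$, the element $\nu=\widetilde{[~,~]_\mathfrak{h}}+\widetilde{\psi^L}+\widetilde{\psi^R}$ vanishes, whence $l_1(f)=\llbracket \nu,f\rrbracket_\mathsf{B}=0$. The element $\mu=\widetilde{[~,~]_\mathfrak{g}}+\widetilde{\rho^L}+\widetilde{\rho^R}$ takes precisely the stated form $\mu((x',x),(y',y))=([x',y']_\mathfrak{g}\,,\,[x',y]_\mathfrak{g}+[x,y']_\mathfrak{g})$, which gives the asserted $l_2(f,g)=\llbracket\llbracket\mu,f\rrbracket_\mathsf{B},g\rrbracket_\mathsf{B}$. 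Finally $\theta=0$ forces $\widetilde{\theta}=0$, so $l_3=\llbracket\llbracket\llbracket\widetilde{\theta},f\rrbracket_\mathsf{B},g\rrbracket_\mathsf{B},h\rrbracket_\mathsf{B}=0$, while $l_k=0$ for $k\geq 4$ holds automatically by the theorem. Thus only $l_0$ and $l_2$ survive and the curved $L_\infty$-algebra structure is as described.

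For the Maurer-Cartan characterization I would invoke Theorem \ref{thm-mc-operator}(ii), which identifies the Maurer-Cartan elements of this curved $L_\infty$-algebra with the deformation maps $r:\mathfrak{g}\to\mathfrak{g}$ in $(\mathfrak{g}\oplus\mathfrak{g},[~,~]_\mathrm{mod})$. As only $l_0$ and $l_2$ are nonzero, the Maurer-Cartan equation reduces to $l_0+\tfrac{1}{2}l_2(r,r)=0$; substituting $\eta=[~,~]_\mathfrak{g}$ and $\rho^L=\rho^R=[~,~]_\mathfrak{g}$ into the explicit expansion already carried out in the proof of Theorem \ref{thm-mc-operator}(ii) turns this into $[u,v]_\mathfrak{g}+[r(u),r(v)]_\mathfrak{g}-r([r(u),v]_\mathfrak{g}+[u,r(v)]_\mathfrak{g})=0$, which is exactly the modified Rota-Baxter identity recorded in the modified Rota-Baxter example. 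This yields the desired equivalence.

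The argument is entirely mechanical once the bidegree components are identified; the only point that genuinely requires care is matching the two copies of $\mathfrak{g}$ in $[~,~]_\mathrm{mod}$ with the roles of $\mathfrak{g}$ and $\mathfrak{h}$ in the general template, so that the cross-terms $[x,v]_\mathfrak{g}$ and $[u,y]_\mathfrak{g}$ are correctly assigned to $\rho^L$ and $\rho^R$ (landing in the second copy) rather than to $\psi^L,\psi^R$ or $\theta$. I do not anticipate any obstacle beyond this bookkeeping, since the vanishing of $\nu$ and $\widetilde{\theta}$ is immediate from that identification.
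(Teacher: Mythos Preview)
Your proposal is correct and follows precisely the approach intended by the paper: the proposition is stated there without proof, immediately after the sentence explaining that such results are obtained by specializing Theorem \ref{thm-mc-operator} to the proto-twilled Leibniz algebras of Section \ref{sec3}, and your identification of the bidegree components of $[~,~]_\mathrm{mod}$ (yielding $\eta=[~,~]_\mathfrak{g}$, $\nu=0$, $\theta=0$ and the stated $\mu$) is exactly the computation this entails.
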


\begin{prop}
    Let $(\mathfrak{g}, [~, ~]_\mathfrak{g})$ be a Leibniz algebra and $(V, \rho^L, \rho^R)$ be a representation of it. For any Leibniz $2$-cocycle $\theta \in Z^2_\mathrm{Leib} (\mathfrak{g}, V)$, the pair $\big(  \oplus_{n=0}^\infty \mathrm{Hom} (V^{\otimes n+1}, \mathfrak{g}), \{ l_k \}_{k=1}^\infty    \big)$ is an $L_\infty$-algebra, where $l_k = 0$ for $k \neq 2, 3$ and
    \begin{align*}
     l_2 (f, g) = \llbracket \llbracket \mu, f \rrbracket_\mathsf{B} , g \rrbracket_\mathsf{B} ~~~ \text{ and } ~~~
            l_3 (f,g,h) = \llbracket \llbracket \llbracket \widetilde{\theta}, f \rrbracket_\mathsf{B} , g \rrbracket_\mathsf{B} , h \rrbracket_\mathsf{B}.
    \end{align*}
    Moreover, a linear map $r: V \rightarrow \mathfrak{g}$ is a $\theta$-twisted Rota-Baxter operator if and only if $r \in \mathrm{Hom} (V, \mathfrak{g})$ is a Maurer-Cartan element of the above $L_\infty$-algebra.
\end{prop}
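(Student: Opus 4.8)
The plan is to obtain this proposition as a direct specialization of Theorem \ref{thm-mc-operator} to the proto-twilled Leibniz algebra $(\mathfrak{g} \oplus V, [~,~]_{\ltimes_\theta})$ of Example \ref{twisted-proto}, with $V$ playing the role of $\mathfrak{h}$. First I would read off the bidegree components of $[~,~]_{\ltimes_\theta}$. Comparing
\begin{align*}
    [ (x, u), (y, v) ]_{\ltimes_\theta} = \big( [x, y]_\mathfrak{g} ~ \! , ~ \! \rho^L (x, v) + \rho^R (u, y) + \theta (x, y) \big)
\end{align*}
with the general expansion
\begin{align*}
    [ (x , u) , (y, v) ]_\mathcal{G} = \big(  [x, y]_\mathfrak{g} + \psi^R (x, v) + \psi^L (u, y) + \eta (u, v) ~ \! , ~ \! [u, v]_\mathfrak{h}  + \rho^L (x, v) + \rho^R (u, y) + \theta (x, y) \big),
\end{align*}
one sees that $V$ carries the zero bracket, that $\psi^L = \psi^R = \eta = 0$, and that the $C^{2|-1}$ component is exactly the horizontal lift $\widetilde{\theta}$ of the prescribed $2$-cocycle. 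Hence $\mu = \widetilde{[~,~]_\mathfrak{g}} + \widetilde{\rho^L} + \widetilde{\rho^R}$ as in (\ref{mu-nu1}), while (\ref{mu-nu2}) gives $\nu = 0$, and also $\eta = 0$.

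Next I would invoke Theorem \ref{thm-mc-operator}(i), whose structure maps are $l_0 = \eta$, $l_1(f) = \llbracket \nu, f \rrbracket_\mathsf{B}$, $l_2(f,g) = \llbracket\llbracket \mu, f\rrbracket_\mathsf{B}, g\rrbracket_\mathsf{B}$, $l_3(f,g,h) = \llbracket\llbracket\llbracket \widetilde{\theta}, f\rrbracket_\mathsf{B}, g\rrbracket_\mathsf{B}, h\rrbracket_\mathsf{B}$, and $l_k = 0$ for $k \geq 4$. Substituting $\eta = 0$ and $\nu = 0$ annihilates $l_0$ and $l_1$, so that the only surviving maps are precisely the $l_2$ and $l_3$ displayed in the statement; moreover, since $l_0 = 0$, the curved $L_\infty$-algebra of Theorem \ref{thm-mc-operator}(i) becomes an honest (uncurved) $L_\infty$-algebra on $\mathfrak{a} = \oplus_{n=0}^\infty \mathrm{Hom}(V^{\otimes n+1}, \mathfrak{g})$, which is exactly the claimed graded space. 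This settles the first assertion with no independent verification of the $L_\infty$-relations.

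For the Maurer-Cartan characterization I would combine Theorem \ref{thm-mc-operator}(ii) with the identification already recorded in Example \ref{twisted-proto}. By part (ii), $r \in \mathrm{Hom}(V, \mathfrak{g})$ is a Maurer-Cartan element of $(\mathfrak{a}, \{l_k\})$ if and only if $r$ is a deformation map in $(\mathfrak{g} \oplus V, [~,~]_{\ltimes_\theta})$; and Example \ref{twisted-proto} shows that such deformation maps are exactly the maps satisfying $[r(u), r(v)]_\mathfrak{g} = r(\rho^L(r(u), v) + \rho^R(u, r(v)) + \theta(r(u), r(v)))$, i.e. the $\theta$-twisted Rota-Baxter operators. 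Chaining the two equivalences gives the stated result.

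The only point requiring genuine care, and the nearest thing to an obstacle, is the bookkeeping in the first paragraph: one must confirm that the $C^{2|-1}$ piece of $[~,~]_{\ltimes_\theta}$ really is $\widetilde{\theta}$ and that $(\mathfrak{g} \oplus V, [~,~]_{\ltimes_\theta})$ is a bona fide proto-twilled Leibniz algebra. The latter is where the hypothesis $\delta_\mathrm{Leib}\theta = 0$ enters, since it is exactly the condition ensuring $\llbracket \Omega, \Omega\rrbracket_\mathsf{B} = 0$; this is already established in Example \ref{twisted-proto}. Everything downstream then follows automatically from the general machinery of Theorem \ref{thm-mc-operator}.
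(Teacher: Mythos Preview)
Your proposal is correct and follows exactly the route the paper intends: the proposition is stated without proof as a direct specialization of Theorem~\ref{thm-mc-operator} to the proto-twilled Leibniz algebra $(\mathfrak{g}\oplus V,[~,~]_{\ltimes_\theta})$ of Example~\ref{twisted-proto}, and your identification of the components $\eta=0$, $\nu=0$, $\mu=\widetilde{[~,~]_\mathfrak{g}}+\widetilde{\rho^L}+\widetilde{\rho^R}$ and $\widetilde{\theta}$ is precisely what is needed. One tiny imprecision: the equivalence between deformation maps in $(\mathfrak{g}\oplus V,[~,~]_{\ltimes_\theta})$ and $\theta$-twisted Rota--Baxter operators is recorded in the later example titled ``Twisted Rota--Baxter operators'' rather than in Example~\ref{twisted-proto} itself, but this is only a citation slip and does not affect the argument.
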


In the above proposition, if we consider $V = \mathfrak{g}$ (the adjoint representation) and take $\theta (x, y) = - [x, y]_\mathfrak{g}$ for $x, y \in \mathfrak{g}$, we obtain an $L_\infty$-algebra $\big(  \oplus_{n=0}^\infty \mathrm{Hom} ( \mathfrak{g}^{\otimes n+1}, \mathfrak{g}), \{ l_k \}_{k=1}^\infty \big)$ whose Maurer-Cartan elements are precisely Reynolds operators on the Leibniz algebra $(\mathfrak{g}, [~,~]_\mathfrak{g})$.


\begin{prop}
    Let $(\mathfrak{g}, [~, ~]_\mathfrak{g})$ be a Lie algebra and $(V, \rho)$ be a representation of it. Then the pair $(\oplus_{n=0}^\infty \mathrm{Hom} (V^{\otimes n+1}, \mathfrak{g}), \{ l_k \}_{k=1}^\infty)$ is an $L_\infty$-algebra, where $l_k = 0$ for $k \neq 2$ and 
    \begin{align*}
        l_2 (f, g) = \llbracket  \llbracket \mu_\mathrm{hemi} ~ \! \! , ~ \! \! f \rrbracket_\mathsf{B}, g \rrbracket_\mathsf{B},
    \end{align*}
    for $f \in \mathrm{Hom} (V^{\otimes m}, \mathfrak{g})$ and $g \in \mathrm{Hom} (V^{\otimes n}, \mathfrak{g})$. Here $\mu_\mathrm{hemi} \in C^{1|0}$ is given by   $\mu_\mathrm{hemi} ((x, u), (y, v)) = ([x, y]_\mathfrak{g}, \rho(x) v)$, for $(x, u) , (y, v) \in \mathfrak{g} \oplus V$. Moreover, a linear map $r: V \rightarrow \mathfrak{g}$ is an embedding tensor if and only if $r \in \mathrm{Hom} (V, \mathfrak{g})$ is a Maurer-Cartan element \cite{sheng-embed}.
\end{prop}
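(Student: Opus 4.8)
The plan is to obtain this proposition as a direct specialization of Theorem \ref{thm-mc-operator} applied to the hemi-semidirect product proto-twilled Leibniz algebra $(\mathfrak{g} \oplus V, [~,~]_\mathrm{hemi})$ of Example \ref{hemi-proto}, with $\mathfrak{h} = V$. First I would record that this hemi-semidirect product is genuinely a proto-twilled Leibniz algebra. Checking the left Leibniz identity (\ref{leib-iden}) for $[(x,u),(y,v)]_\mathrm{hemi} = ([x,y]_\mathfrak{g}, \rho(x)v)$ splits into two components: the first is the Jacobi identity of the Lie algebra $\mathfrak{g}$, and the second reduces precisely to $\rho([x,y]_\mathfrak{g}) = \rho(x)\rho(y) - \rho(y)\rho(x)$, i.e. to the statement that $\rho$ is a representation. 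This is the one place where the Lie (rather than merely Leibniz) hypothesis on $\mathfrak{g}$ is actually used.

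Next I would identify the bidegree components of $[~,~]_\mathrm{hemi}$ against the general decomposition $[~,~]_\mathcal{G} = \widetilde{\theta} + \mu + \nu + \widetilde{\eta}$. Comparing the hemi-semidirect bracket with the generic bracket shows $\psi^L = \psi^R = \eta = 0$ in the $\mathfrak{g}$-component, and $[~,~]_\mathfrak{h} = \rho^R = \theta = 0$ with $\rho^L(x,v) = \rho(x)v$ in the $V$-component. Hence $\widetilde{\theta} = 0$, $\widetilde{\eta} = 0$, $\nu = 0$, and $\mu = \mu_\mathrm{hemi}$ exactly as given in the statement.

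Feeding these vanishing data into the structure maps of Theorem \ref{thm-mc-operator}(i) collapses the curved $L_\infty$-algebra to an ordinary one: $l_0 = \eta = 0$, $l_1 = \llbracket \nu, - \rrbracket_\mathsf{B} = 0$, $l_3 = \llbracket \llbracket \llbracket \widetilde{\theta}, - \rrbracket_\mathsf{B}, - \rrbracket_\mathsf{B}, - \rrbracket_\mathsf{B} = 0$, and $l_k = 0$ for $k \geq 4$, leaving only $l_2(f,g) = \llbracket \llbracket \mu_\mathrm{hemi}, f \rrbracket_\mathsf{B}, g \rrbracket_\mathsf{B}$. Since the only surviving structure map is the binary bracket $l_2$, the resulting object is in fact a graded Lie algebra, and the required $L_\infty$-relations are automatic from Voronov's construction underlying Theorem \ref{thm-mc-operator}(i). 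Finally, the Maurer-Cartan characterization follows by combining Theorem \ref{thm-mc-operator}(ii), which says Maurer-Cartan elements are exactly deformation maps, with Example \ref{hemi-proto}, which identifies deformation maps in this hemi-semidirect product with embedding tensors $[r(u), r(v)]_\mathfrak{g} = r(\rho(u)v)$.

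The bulk of the argument is bookkeeping: once the components are matched, everything is dictated by the general theorem. The only genuine verification is the second-component computation confirming the Leibniz identity for $[~,~]_\mathrm{hemi}$, where the representation axiom for $\rho$ is essential; I do not expect any real obstacle beyond this, since all the heavy lifting is already contained in Theorem \ref{thm-mc-operator}.
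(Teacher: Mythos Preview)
Your proposal is correct and follows precisely the approach the paper intends: the proposition is stated in the paper without its own proof, as one of several direct specializations of Theorem~\ref{thm-mc-operator} to the proto-twilled Leibniz algebras listed in Section~\ref{sec3} (here the hemi-semidirect product of Example~\ref{hemi-proto}). Your identification of the bidegree components $\widetilde{\theta}=\nu=\widetilde{\eta}=0$ and $\mu=\mu_{\mathrm{hemi}}$, and the resulting collapse of the curved $L_\infty$-structure to the single bracket $l_2$, is exactly the intended bookkeeping.
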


\begin{prop}\label{def-matched-prop}
    Let $(\mathcal{G} = \mathfrak{g} \oplus \mathfrak{h}, [~,~]_\mathcal{G})$ be a matched pair of Leibniz algebras. Then there is an $L_\infty$-algebra $\big( \oplus_{n=0}^\infty \mathrm{Hom} (\mathfrak{h}^{\otimes n+1}, \mathfrak{g}), \{ l_k \}_{k=1}^\infty  \big)$, where $l_k = 0$ for $k \neq 1, 2$ and 
    \begin{align*}
     l_1 (f) = \llbracket \nu, f \rrbracket_\mathsf{B} ~~~ \text{ and } ~~~ l_2 (f, g) = \llbracket \llbracket \mu, f \rrbracket_\mathsf{B}, g \rrbracket_\mathsf{B},
    \end{align*}
    for $f \in \mathrm{Hom} (\mathfrak{h}^{\otimes m}, \mathfrak{g})$ and $g \in \mathrm{Hom} (\mathfrak{h}^{\otimes n}, \mathfrak{g})$. The explicit description of $l_1$ is given by
    \begin{align*}
        l_1 (f) (u_1, \ldots, u_{m+1}) =~& \sum_{i=1}^m (-1)^{i+m} ~ \! \psi^L ( u_i ,  f (u_1, \ldots, \widehat{ u_i} , \ldots, u_{m+1}) ) + \! \psi^R (f (u_1, \ldots, u_m), u_{m+1} ) \\
   &+  \sum_{1 \leq i < j \leq m+1} (-1)^{i+m+1} ~ \! f (u_1, \ldots, \widehat{u_i}, \ldots, u_{j-1}, [u_i, u_j]_\mathfrak{h}, u_{j+1}, \ldots, u_{m+1}),
    \end{align*}
    for $u_1, \ldots, u_{m+1} \in \mathfrak{h}$, while the explicit description of $l_2$ can be found in \cite{tang-sheng-zhou}. Finally, a linear map $r: \mathfrak{h} \rightarrow \mathfrak{g}$ is a deformation map in the given matched pair of Leibniz algebras if and only if $r \in \mathrm{Hom} (\mathfrak{h}, \mathfrak{g})$ is a Maurer-Cartan element of the above $L_\infty$-algebra.
\end{prop}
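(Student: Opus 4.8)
The plan is to obtain this proposition by specializing the general construction of Theorem \ref{thm-mc-operator} to the matched-pair case and then unwinding $l_1$ into its explicit form. First I would recall from Example \ref{agore-exam} that a matched pair of Leibniz algebras is exactly a proto-twilled Leibniz algebra for which $\theta = 0$ and $\eta = 0$. In terms of the bidegree decomposition $\Omega = \widetilde{\theta} + \mu + \nu + \widetilde{\eta}$ this means $\widetilde{\theta} = 0$ and $\widetilde{\eta} = 0$, so that $\Omega = \mu + \nu$ with $\mu \in C^{1|0}$ and $\nu \in C^{0|1}$ given in (\ref{mu-nu1})--(\ref{mu-nu2}).

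Next I would feed these vanishings into the structure maps supplied by Theorem \ref{thm-mc-operator}(i). Because $l_0 = P(\Omega) = \eta = 0$, the curved $L_\infty$-algebra degenerates to an honest $L_\infty$-algebra. Because $l_3(f,g,h) = \llbracket \llbracket \llbracket \widetilde{\theta}, f \rrbracket_\mathsf{B}, g \rrbracket_\mathsf{B}, h \rrbracket_\mathsf{B}$ and $\widetilde{\theta} = 0$, we get $l_3 = 0$, and together with $l_k = 0$ for $k \geq 4$ this leaves only $l_1(f) = \llbracket \nu, f \rrbracket_\mathsf{B}$ and $l_2(f,g) = \llbracket \llbracket \mu, f \rrbracket_\mathsf{B}, g \rrbracket_\mathsf{B}$, precisely as stated. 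The Maurer-Cartan characterization is then immediate from Theorem \ref{thm-mc-operator}(ii): a linear map $r : \mathfrak{h} \rightarrow \mathfrak{g}$ is a Maurer-Cartan element of this $L_\infty$-algebra if and only if it is a deformation map in the proto-twilled Leibniz algebra, and specializing the defining identity (\ref{defor-map-iden}) to $\theta = 0$, $\eta = 0$ recovers exactly the deformation-map condition for a matched pair recalled in Example \ref{agore-exam}. The explicit description of $l_2$ can be cited from \cite{tang-sheng-zhou}, as the statement itself indicates.

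It then remains to expand $l_1(f) = \llbracket \nu, f \rrbracket_\mathsf{B}$ for $f \in \mathrm{Hom}(\mathfrak{h}^{\otimes m}, \mathfrak{g})$. Since $\nu$ has arity $2$ and $f$ has arity $m$, the Balavoine bracket reads $\llbracket \nu, f \rrbracket_\mathsf{B} = \nu \diamond f - (-1)^{m-1} f \diamond \nu$. Using the explicit form $\nu = \widetilde{[~,~]_\mathfrak{h}} + \widetilde{\psi^L} + \widetilde{\psi^R}$ from (\ref{mu-nu2}) and evaluating on $u_1, \ldots, u_{m+1} \in \mathfrak{h}$, I would track which slot of $\nu$ absorbs the $\mathfrak{g}$-valued output of $f$: the $\widetilde{\psi^L}$ and $\widetilde{\psi^R}$ summands produce the action contributions $\psi^L(u_i, f(\ldots))$ and $\psi^R(f(\ldots), u_{m+1})$, while placing two $\mathfrak{h}$-inputs into $\widetilde{[~,~]_\mathfrak{h}}$ inside $f$ produces the bracket terms $f(\ldots, [u_i, u_j]_\mathfrak{h}, \ldots)$. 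The stated signs $(-1)^{i+m}$ and $(-1)^{i+m+1}$ arise as the product of the Koszul and shuffle signs in the $\diamond$-product, and one checks that the result equals, up to the overall factor coming from the $L_\infty$-shift, the Loday-Pirashvili coboundary of $f$ for the representation $(\mathfrak{g}, \psi^L, \psi^R)$ of $\mathfrak{h}$ (which is a representation precisely because the matched-pair identities hold). This computation is entirely parallel to the one behind the derivation proposition above.

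The only real obstacle is bookkeeping: assembling the surviving terms of the $\diamond$-product with their correct Koszul and shuffle signs into the displayed formula for $l_1$. There is no conceptual difficulty, since both the $L_\infty$-structure and the Maurer-Cartan statement follow at once from Theorem \ref{thm-mc-operator}; only the explicit rewriting of $l_1$ demands care with signs.
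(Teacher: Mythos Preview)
Your proposal is correct and follows exactly the approach the paper intends: the proposition is stated without its own proof, being one of several immediate specializations of Theorem \ref{thm-mc-operator} obtained by plugging in the relevant proto-twilled Leibniz algebra (here the matched-pair case $\theta = 0$, $\eta = 0$), which kills $l_0$ and $l_3$ and leaves precisely $l_1$ and $l_2$. Your plan to unwind $l_1(f) = \llbracket \nu, f \rrbracket_\mathsf{B}$ via the Balavoine $\diamond$-product is the standard expansion and matches what the paper does implicitly elsewhere (e.g.\ in the derivation proposition and in relating $\llbracket \nu_r, f \rrbracket_\mathsf{B}$ to $\delta^r_\mathrm{Leib}$).
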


Given a deformation map in a proto-twilled Leibniz algebra, we now construct a new $L_\infty$-algebra by the twisting method.

\begin{thm}
    Let $(\mathcal{G} = \mathfrak{g} \oplus \mathfrak{h}, [~,~]_\mathcal{G})$ be a proto-twilled Leibniz algebra and $r : \mathfrak{h} \rightarrow \mathfrak{g}$ be a deformation map. Then  $\big( \mathfrak{a} = \oplus_{n=0}^\infty \mathrm{Hom} ( \mathfrak{h}^{\otimes n+1}, \mathfrak{g}), \{ l_k^r \}_{k=1}^\infty \big)$ is an $L_\infty$-algebra, where for $f, g, h \in \mathfrak{a}$,
    \begin{align*}
        l_1^r (f) =~& \llbracket \nu, f \rrbracket_\mathsf{B} + \llbracket \llbracket \mu, r \rrbracket_\mathsf{B} , f\rrbracket_\mathsf{B} + \frac{1}{2} \llbracket \llbracket \llbracket \widetilde{\theta}, r \rrbracket_\mathsf{B}, r \rrbracket_\mathsf{B}, f \rrbracket_\mathsf{B},\\
        l_2^r (f, g) =~& \llbracket \llbracket \mu, f \rrbracket_\mathsf{B} , g \rrbracket_\mathsf{B} + \llbracket \llbracket \llbracket \widetilde{\theta}, r \rrbracket_\mathsf{B}, f \rrbracket_\mathsf{B}, g \rrbracket_\mathsf{B},\\
        l_3^r (f, g, h) =~& \llbracket \llbracket \llbracket \widetilde{\theta}, f \rrbracket_\mathsf{B}, g \rrbracket_\mathsf{B}, h \rrbracket_\mathsf{B},\\
        l_k^r =~& 0, \text{ for } k \geq 4.
    \end{align*}
    Moreover, for any linear map $r' : \mathfrak{h} \rightarrow \mathfrak{g}$, the sum $r + r'$ is also a deformation map if and only if $r' \in \mathfrak{a}_0$ is a Maurer-Cartan element of the $L_\infty$-algebra $(\mathfrak{a} , \{ l_k^r \}_{k=1}^\infty)$.
\end{thm}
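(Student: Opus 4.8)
The plan is to realize $(\mathfrak{a},\{l_k^r\})$ as the twist of the curved $L_\infty$-algebra $(\mathfrak{a},\{l_k\})$ of Theorem \ref{thm-mc-operator} by the Maurer-Cartan element $r$. By Theorem \ref{thm-mc-operator}(ii) a deformation map is exactly a Maurer-Cartan element of $(\mathfrak{a},\{l_k\})$, so Theorem \ref{mc-twist-thm} immediately produces an honest $L_\infty$-algebra $(\mathfrak{a},\{l_k^r\}_{k=1}^\infty)$ with
$$l_k^r(f_1,\ldots,f_k)=\sum_{n=0}^\infty \frac{1}{n!}\,l_{n+k}(\underbrace{r,\ldots,r}_{n},f_1,\ldots,f_k).$$
Because $r\in\mathfrak{a}_0$ has degree zero, permuting copies of $r$ past the other inputs contributes no Koszul sign, so the twisting formula simplifies with all signs trivial.

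First I would compute these twisted maps explicitly. Since $l_k=0$ for $k\geq 4$, the defining series for $l_k^r$ truncates after finitely many terms. For $l_1^r$ only $l_1(f)$, $l_2(r,f)$ and $\tfrac12 l_3(r,r,f)$ survive; inserting the formulas for $l_1,l_2,l_3$ from Theorem \ref{thm-mc-operator} reproduces the stated expression verbatim. Likewise $l_2^r$ retains only $l_2(f,g)+l_3(r,f,g)$, $l_3^r$ collapses to $l_3(f,g,h)$, and $l_k^r=0$ for $k\geq 4$ because every $l_{n+k}$ with $n\geq 0$ then vanishes. This step is pure bookkeeping and yields exactly the four displayed formulas.

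For the second assertion I would invoke the general compatibility of Maurer-Cartan elements with twisting. Expanding $l_m(r+r',\ldots,r+r')$ by multilinearity and graded symmetry and re-indexing gives the key identity
$$l_0+\sum_{m=1}^\infty\frac{1}{m!}\,l_m(\underbrace{r+r',\ldots,r+r'}_{m})=\Big(l_0+\sum_{m=1}^\infty\frac{1}{m!}\,l_m(\underbrace{r,\ldots,r}_{m})\Big)+\sum_{k=1}^\infty\frac{1}{k!}\,l_k^r(\underbrace{r',\ldots,r'}_{k}).$$
The first parenthesised term vanishes because $r$ is a Maurer-Cartan element. Hence $r+r'$ satisfies the Maurer-Cartan equation for $(\mathfrak{a},\{l_k\})$ --- equivalently, $r+r'$ is a deformation map by Theorem \ref{thm-mc-operator}(ii) --- if and only if $r'$ satisfies the Maurer-Cartan equation for the twisted algebra $(\mathfrak{a},\{l_k^r\})$.

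The only real work is the re-indexing underlying the displayed identity, namely checking that $\sum_{m\geq 1}\frac{1}{m!}\sum_{k=1}^m\binom{m}{k}\,l_m(\underbrace{r,\ldots,r}_{m-k},\underbrace{r',\ldots,r'}_{k})$ equals $\sum_{k\geq 1}\frac{1}{k!}\,l_k^r(\underbrace{r',\ldots,r'}_{k})$ after the substitution $n=m-k$ absorbs the binomial coefficient into the factorials $\tfrac{1}{n!\,k!}$. Since all $l_k$ vanish for $k\geq 4$, every sum in sight is finite and no convergence question arises, so this combinatorial step --- the only genuine obstacle --- can be verified term by term.
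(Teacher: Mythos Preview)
Your proposal is correct and follows essentially the same route as the paper: both invoke Theorem \ref{thm-mc-operator}(ii) to view $r$ as a Maurer-Cartan element, twist via Theorem \ref{mc-twist-thm}, read off the truncated formulas for $l_k^r$, and then expand the Maurer-Cartan equation for $r+r'$ to obtain the equivalence in the second part. The paper simply writes out the finite expansion $l_0+l_1(r+r')+\tfrac{1}{2!}l_2(r+r',r+r')+\tfrac{1}{3!}l_3(r+r',r+r',r+r')$ directly rather than phrasing it as your general re-indexing identity, but the content is identical.
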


\begin{proof}
Since $r: \mathfrak{h} \rightarrow \mathfrak{g}$ is a deformation map, it follows from the previous theorem that $r \in \mathfrak{a}_0 = \mathrm{Hom} (\mathfrak{h}, \mathfrak{g})$ is a Maurer-Cartan element of the curved $L_\infty$-algebra $(\mathfrak{a}, \{ l_k \}_{k=0}^\infty)$. Hence by Theorem \ref{mc-twist-thm}, one obtains an $L_\infty$-algebra $(\mathfrak{a}, \{ l_k^r \}_{k=0}^\infty)$, where
\begin{align*}
    l_k^r (f_1, \ldots, f_k) = \sum_{n=0}^\infty \frac{1}{n!} ~ \! l_{n+k} (r, \ldots, r, f_1, \ldots, f_k), \text{ for } k \geq 1 \text{ and } f_1, \ldots, f_k \in \mathfrak{a}.
\end{align*}
Thus, for any $f, g, h \in \mathfrak{a}$, we have
\begin{align*}
    l_1^r (f) =~& l_1 (f) + l_2 (r, f) + \frac{1}{2} l_3 (r,r, f) = \llbracket \nu, f \rrbracket_\mathsf{B} + \llbracket \llbracket \mu, r \rrbracket_\mathsf{B} , f\rrbracket_\mathsf{B} + \frac{1}{2} \llbracket \llbracket \llbracket \widetilde{\theta}, r \rrbracket_\mathsf{B}, r \rrbracket_\mathsf{B}, f \rrbracket_\mathsf{B}, \\
    l_2^r (f, g) =~& l_2 (f, g) + l_3 (r, f, g) = \llbracket \llbracket \mu, f \rrbracket_\mathsf{B} , g \rrbracket_\mathsf{B} + \llbracket \llbracket \llbracket \widetilde{\theta}, r \rrbracket_\mathsf{B}, f \rrbracket_\mathsf{B}, g \rrbracket_\mathsf{B},\\
    l_3^r (f, g, h) =~&l_3 (f, g,h ) = \llbracket \llbracket \llbracket \widetilde{\theta}, f \rrbracket_\mathsf{B}, g \rrbracket_\mathsf{B}, h \rrbracket_\mathsf{B},\\
    \text{ and } l_k^r =~& 0, \text{ for } k \geq 4.
\end{align*}
For the last part, we observe that
\begin{align*}
    &l_0 + l_1 (r+r') + \frac{1}{2!} ~ \! l_2 (r+r', r+ r') + \frac{1}{3!} ~ \! l_3 (r+r', r+r', r+r') \\
    &= l_0 + l_1 (r) + \frac{1}{2!} ~ \! l_2 (r,r) + \frac{1}{3!} ~ \! l_3 (r,r,r) \\ 
    & \qquad + \big\{ l_1 (r') + l_2 (r,r') + \frac{1}{2} l_2 (r',r') + \frac{1}{2} l_3 (r,r,r') + \frac{1}{2} l_3 (r,r',r') + \frac{1}{3!} l_3 (r',r',r')    \big\} \\
    &= l_1^r (r') + \frac{1}{2!} ~ \! l_2^r (r',r') + \frac{1}{3!} ~ \!  l_3^r (r',r',r').
\end{align*}
This shows that $r+r'$ is a Maurer-Cartan element of the curved $L_\infty$-algebra $(\mathfrak{a}, \{ l_k \}_{k=0}^\infty)$ if and only if $r'$ is a Maurer-Cartan element of the $L_\infty$-algebra $(\mathfrak{a}, \{ l_k^r \}_{k=1}^\infty)$. Hence the result follows.
\end{proof}

The $L_\infty$-algebra $\big( \mathfrak{a} = \oplus_{n=0}^\infty \mathrm{Hom} ( \mathfrak{h}^{\otimes n+1}, \mathfrak{g}), \{ l_k^r \}_{k=1}^\infty \big)$ constructed in the above theorem is called the {\bf governing algebra} of the deformation map $r$ as this $L_\infty$-algebra governs the linear deformations of the operator $r$.

Let $r: \mathfrak{h} \rightarrow \mathfrak{g}$ be a deformation map. Since the governing algebra  $\big( \mathfrak{a} = \oplus_{n=0}^\infty \mathrm{Hom} ( \mathfrak{h}^{\otimes n+1}, \mathfrak{g}), \{ l_k^r \}_{k=1}^\infty \big)$ is an $L_\infty$-algebra, it follows that $(l_1^r)^2 = 0$. In other words, the degree $1$ map $l_1^r: \mathfrak{a} \rightarrow \mathfrak{a}$ is a differential. The next result shows that $l_1^r$ coincides with the map $\delta^r_\mathrm{Leib}$ given in (\ref{delta-r}) up to some sign.

\begin{prop}
    For any $f \in \mathfrak{a}_{n-1} = \mathrm{Hom} (\mathfrak{h}^{\otimes n} , \mathfrak{g})$, we have $l_1^r (f) = (-1)^{n-1} ~ \! \delta^r_\mathrm{Leib} (f)$.
\end{prop}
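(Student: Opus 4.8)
The plan is to recognize that the three terms defining $l_1^r(f)$ reassemble into a single Balavoine bracket against the twisted structure $\nu_r$, and then to identify that bracket with the Loday--Pirashvili coboundary of $\mathfrak{h}_r$. Identifying the map $r$ with its horizontal lift $\widetilde{r}$ and using the bilinearity of $\llbracket ~,~\rrbracket_\mathsf{B}$ together with the expression (\ref{define-nu-r}) for $\nu_r$, I would first observe
\begin{align*}
\llbracket \nu_r, f\rrbracket_\mathsf{B} = \llbracket \nu, f\rrbracket_\mathsf{B} + \llbracket\llbracket\mu,\widetilde{r}\rrbracket_\mathsf{B}, f\rrbracket_\mathsf{B} + \tfrac{1}{2}\llbracket\llbracket\llbracket\widetilde{\theta},\widetilde{r}\rrbracket_\mathsf{B},\widetilde{r}\rrbracket_\mathsf{B}, f\rrbracket_\mathsf{B} = l_1^r(f).
\end{align*}
Thus the whole statement reduces to evaluating the single bracket $\llbracket\nu_r, f\rrbracket_\mathsf{B}$ and comparing with (\ref{delta-r}).

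Next I would substitute the decomposition $\nu_r = \widetilde{[~,~]_r} + \widetilde{\psi^L_r} + \widetilde{\psi^R_r}$ from Remark \ref{remark-ind-rep}(ii). Since $\nu_r$ has bidegree $0|1$ and $f \in \mathrm{Hom}(\mathfrak{h}^{\otimes n},\mathfrak{g})$ has bidegree $-1|n$, Lemma \ref{lemma-tang} shows that $\llbracket\nu_r, f\rrbracket_\mathsf{B}$ has bidegree $-1|n+1$; hence it is determined by its values on $\mathfrak{h}^{\otimes n+1}$, and it suffices to evaluate it on arguments $u_1,\ldots,u_{n+1}\in\mathfrak{h}$. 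Writing $\llbracket\nu_r, f\rrbracket_\mathsf{B} = \nu_r\diamond f - (-1)^{n-1}f\diamond\nu_r$ (the Balavoine degree of $f$ being $n-1$), I would expand both $\diamond$-products from the definition. On all-$\mathfrak{h}$ inputs only three families of terms survive, because $\widetilde{[~,~]_r}$ contributes exactly when $\nu_r$ receives two $\mathfrak{h}$-arguments, while $\widetilde{\psi^L_r}$ and $\widetilde{\psi^R_r}$ contribute exactly when $\nu_r$ receives one $\mathfrak{h}$-argument together with the $\mathfrak{g}$-valued output of $f$.

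The matching then runs as follows. In $\nu_r\diamond f$, the summand $i=1$ carries the trivial shuffle and gives $\nu_r\big(f(u_1,\ldots,u_n),u_{n+1}\big)=\psi^R_r\big(f(u_1,\ldots,u_n),u_{n+1}\big)$, the right-action term; the summand $i=2$ runs over the shuffles $\mathbb{S}_{(1,n-1)}$, and since the shuffle sending the $a$-th argument to the front contributes the Koszul sign $(-1)^{a-1}$, it produces $(-1)^{n-1}\sum_{a=1}^n(-1)^{a-1}\psi^L_r\big(u_a,f(u_1,\ldots,\widehat{u_a},\ldots,u_{n+1})\big)$, the left-action terms. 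The product $f\diamond\nu_r$ feeds two $\mathfrak{h}$-arguments into $\nu_r$, which outputs $[u_i,u_j]_r$, giving precisely the bracket terms $f(u_1,\ldots,\widehat{u_i},\ldots,[u_i,u_j]_r,\ldots,u_{n+1})$. Collecting the signs for each family and comparing with the coefficients $(-1)^{i+1}$, $(-1)^{n+1}$, and $(-1)^i$ in (\ref{delta-r}), one finds that each family acquires the common overall factor $(-1)^{n-1}$, whence $l_1^r(f)=(-1)^{n-1}\delta^r_\mathrm{Leib}(f)$.

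The main obstacle is the sign bookkeeping: one must correctly combine the internal signs $(-1)^{(i-1)(n-1)}$ of the $\diamond$-products, the Koszul signs $(-1)^\sigma$ from the shuffles $\mathbb{S}_{(i-1,n-1)}$ and $\mathbb{S}_{(i-1,1)}$ appearing in $\nu_r\diamond f$ and $f\diamond\nu_r$, and the sign $-(-1)^{n-1}$ relating the two $\diamond$-products, and then verify that these collapse to the single uniform factor $(-1)^{n-1}$ across the right-action, left-action, and bracket terms simultaneously. Everything else is routine, the vanishing of all non-contributing components being guaranteed by the bidegree constraints of Lemma \ref{lemma-tang}.
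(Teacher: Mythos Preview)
Your proposal is correct and follows essentially the same route as the paper's own proof: reduce $l_1^r(f)$ to $\llbracket \nu_r, f\rrbracket_\mathsf{B}$ via (\ref{define-nu-r}), invoke the decomposition $\nu_r = \widetilde{[~,~]_r} + \widetilde{\psi^L_r} + \widetilde{\psi^R_r}$ from Remark \ref{remark-ind-rep}(ii), and identify the result with the Loday--Pirashvili coboundary. The only difference is that you spell out the shuffle-by-shuffle sign bookkeeping that the paper leaves implicit in its final sentence.
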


\begin{proof}
We have
\begin{align*}
    l_1^r (f) =~& \llbracket \nu, f \rrbracket_\mathsf{B} + \llbracket \llbracket \mu, \widetilde{r} \rrbracket_\mathsf{B} , f \rrbracket_\mathsf{B} + \frac{1}{2} \llbracket \llbracket \llbracket \widetilde{\theta} , \widetilde{r} \rrbracket_\mathsf{B}, \widetilde{r} \rrbracket_\mathsf{B}, f \rrbracket_\mathsf{B} \\
    =~& \big\llbracket \nu + \llbracket \mu, \widetilde{r} \rrbracket_\mathsf{B} + \frac{1}{2} \llbracket \llbracket \widetilde{\theta} , \widetilde{r} \rrbracket_\mathsf{B}, \widetilde{r} \rrbracket_\mathsf{B} ~ \! , f \big\rrbracket_\mathsf{B} \\
    =~& \llbracket \nu_r , f  \rrbracket_\mathsf{B} \quad (\text{by } (\ref{define-nu-r})).
\end{align*}
Since $\nu_r = \widetilde{[~,~]_r} + \widetilde{\psi^L_r} + \widetilde{\psi^R_r}~$~ (cf. Remark \ref{remark-ind-rep}(ii)), it follows that $\llbracket \nu_r , f \rrbracket_\mathsf{B} = (-1)^{n-1} ~ \! \delta^r_\mathrm{Leib} (f)$. This completes the proof.
\end{proof}

\begin{remark}
    It follows from the above proposition that the cohomology of the deformation map $r$ can be equivalently described by the cohomology of the cochain complex $\{ C^\bullet (r), l_1^r \}$.
\end{remark}

\begin{remark}\label{remark-fial}
    A proto-twilled Lie algebra is a Lie algebra $(\mathcal{G}, [~,~]_\mathcal{G})$ whose underlying vector space has a direct sum decomposition $\mathcal{G} = \mathfrak{g} \oplus \mathfrak{h}$ into subspaces. In similar to Definition \ref{defn-defor-map}, a linear map $r : \mathfrak{h} \rightarrow \mathfrak{g}$ is a deformation map in the proto-twilled Lie algebra $(\mathcal{G} = \mathfrak{g} \oplus \mathfrak{h}, [~,~]_\mathcal{G})$ if its graph $\mathrm{Gr} (r)$ is a subalgebra of the Lie algebra $(\mathcal{G}, [~,~]_\mathcal{G})$. A deformation map $r :\mathfrak{h} \rightarrow \mathfrak{g}$ in a proto-twilled Lie algebra $(\mathcal{G} = \mathfrak{g} \oplus \mathfrak{h}, [~,~]_\mathcal{G})$ induces a Lie algebra structure on the vector space $\mathfrak{h}$ (denoted by $\mathfrak{h}_r^\mathrm{Lie}$). Moreover, there is a Lie algebra representation of $\mathfrak{h}_r^\mathrm{Lie}$ on the vector space $\mathfrak{g}$ (denoted by $\mathfrak{g}_r^\mathrm{Rep}$). Then the Chevalley-Eilenberg cohomology of the Lie algebra $\mathfrak{h}_r^\mathrm{Lie}$ with coefficients in the representation $\mathfrak{g}_r^\mathrm{Rep}$ is defined to be the cohomology of the deformation map $r$ in the given proto-twilled Lie algebra. This cohomology turns out to govern the linear deformations of the operator $r$.

    Let $(\mathcal{G} = \mathfrak{g} \oplus \mathfrak{h}, [~,~]_\mathcal{G})$ be a given proto-twilled Lie algebra and $r : \mathfrak{h} \rightarrow \mathfrak{g}$ be a deformation map on it. By viewing $(\mathcal{G} = \mathfrak{g} \oplus \mathfrak{h}, [~,~]_\mathcal{G})$ as a proto-twilled Leibniz algebra and $r : \mathfrak{h} \rightarrow \mathfrak{g}$ a deformation map on it, one may also define the cohomology of the operator $r$. This is precisely given by the Loday-Pirashvili cohomology of the Lie algebra $\mathfrak{h}_r^\mathrm{Lie}$ with coefficients in the representation $\mathfrak{g}_r^\mathrm{Rep}$. This cohomology turns out to govern the linear deformations of the operator $r$ (viewed as a deformation map in the proto-twilled Leibniz algebra). This is certainly different than the cohomology of the operator $r$ viewed as a deformation map in the proto-twilled Lie algebra.

     For example, let $(\mathfrak{g}, [~,~]_\mathfrak{g})$ be any Lie algebra. Consider the (direct product) proto-twilled Lie algebra $(\mathfrak{g} \oplus \mathfrak{g}, [~,~]_\mathrm{dir})$, where $[ (x, x'), (y, y')]_\mathrm{dir} = ([x, y]_\mathfrak{g}, [x', y']_\mathfrak{g})$ for $(x, x') , (y, y') \in \mathfrak{g} \oplus \mathfrak{g}$. Then the identity map $\mathrm{Id}_\mathfrak{g} : \mathfrak{g} \rightarrow \mathfrak{g}$ is a deformation map in the proto-twilled Lie algebra $(\mathfrak{g} \oplus \mathfrak{g}, [~,~]_\mathrm{dir})$. The cohomology of the deformation map $\mathrm{Id}_\mathfrak{g}$ is precisely the Chevalley-Eilenberg cohomology of the Lie algebra $(\mathfrak{g}, [~,~]_\mathfrak{g})$ with coefficients in the adjoint representation. On the other hand, the cohomology of the same map $\mathrm{Id}_\mathfrak{g}$ (now viewed as a deformation map in the corresponding proto-twilled Leibniz algebra) is given by the Loday-Pirashvili cohomology of the Lie algebra $(\mathfrak{g}, [~,~]_\mathfrak{g})$ with coefficients in the adjoint representation. In \cite{fial-mandal} the authors have shown that the Chevalley-Eilenberg cohomology of a Lie algebra $(\mathfrak{g}, [~,~]_\mathfrak{g})$ with coefficients in the adjoint representation is in general different than the Loday-Pirashvili cohomology of the same Lie algebra with adjoint representation. Moreover, the comparison between these two cohomologies can help to decide whether the Lie algebra $(\mathfrak{g}, [~,~]_\mathfrak{g})$ has more Leibniz deformations than just the Lie ones. As a conclusion, we get that a deformation map $r$ in a proto-twilled Lie algebra may have fewer linear deformations than the linear deformations of $r$ while viewing it as a deformation map in the corresponding proto-twilled Leibniz algebra.
\end{remark}

\medskip

\section{Governing algebra for simultaneous deformations of a proto-twilled Leibniz algebra and a deformation map}\label{sec6}
In this section, we first construct an $L_\infty$-algebra whose Maurer-Cartan elements correspond to pairs $(\Omega, r)$ consisting of a (proto-twilled) Leibniz algebra structure $\Omega$ on the direct sum $\mathcal{G} = \mathfrak{g} \oplus \mathfrak{h}$ of two vector spaces and a deformation map $r : \mathfrak{h} \rightarrow \mathfrak{g}$ on it. Using this characterization and the twisting method, we obtain the governing $L_\infty$-algebra that controls the simultaneous deformations of a given proto-twilled Leibniz algebra and a fixed deformation map.

Let $\mathfrak{g}$ and $\mathfrak{h}$ be two vector spaces. Let $\mathfrak{B} = \big(  \oplus_{n=0}^\infty \mathrm{Hom} (\mathcal{G}^{\otimes n+1}, \mathcal{G}), \llbracket ~, ~ \rrbracket_\mathsf{B} \big)$ be the Balavoine's graded Lie algebra associated to the direct sum $\mathcal{G} = \mathfrak{g} \oplus \mathfrak{h}$. Then we have already seen that $\mathfrak{a} = \oplus_{n=0}^\infty \mathrm{Hom} (\mathfrak{h}^{\otimes n+1}, \mathfrak{g})$ is an abelian graded Lie subalgebra of $\mathfrak{B}$. Further, if $P : \mathfrak{B} \rightarrow \mathfrak{a}$ is the projection map then $\mathrm{ker ~ \!} P \subset \mathfrak{B}$ is a graded Lie subalgebra. Hence $(\mathfrak{B}, \mathfrak{a}, P, \Delta = 0)$ is a $V$-data. Therefore, as a consequence of Theorem \ref{thm-voro}, we obtain the following.

\begin{thm}\label{mc-simul}
Let $\mathfrak{g}$ and $\mathfrak{h}$ be two vector spaces.
\begin{itemize}
\item[(i)] Then $(s^{-1} \mathfrak{B} \oplus \mathfrak{a}, \{ \widetilde{l}_k \}_{k=1}^\infty)$ is an $L_\infty$-algebra, where
    \begin{align*}
        \widetilde{l_1} ((s^{-1} F, f)) =~& (0, P (F)), \\
        \widetilde{l_2} ((s^{-1} F, 0), (s^{-1} G, 0)) =~& \big( (-1)^{|F|} ~ \! s^{-1} \llbracket F, G \rrbracket_\mathsf{B} , 0   \big), \\
        \widetilde{l_k} ((s^{-1} F, 0), (0, f_1), \ldots , (0, f_{k-1}) ) =~& \big( 0, P \llbracket \cdots \llbracket \llbracket F, f_1 \rrbracket_\mathsf{B}, f_2 \rrbracket_\mathsf{B}, \ldots, f_{k-1} \rrbracket_\mathsf{B} \big), ~~ k \geq 2,
    \end{align*}
    for homogeneous elements $F, G \in \mathfrak{B}$ and $f, f_1, \ldots, f_{k-1} \in \mathfrak{a}$.
    \item[(ii)] Suppose there is an element $\Omega \in \mathrm{Hom} (\mathcal{G}^{\otimes 2}, \mathcal{G})$ and a linear map $r : \mathfrak{h} \rightarrow \mathfrak{g}$. Then $\Omega$ defines a Leibniz algebra structure (i.e. $(\mathcal{G} = \mathfrak{g} \oplus \mathfrak{h}, \Omega)$ is a proto-twilled Leibniz algebra) and $r$  
    is a deformation map if and only if $\alpha = (s^{-1} \Omega , r) \in (s^{-1} \mathfrak{B} \oplus \mathfrak{a})_0$ is a Maurer-Cartan element of the above $L_\infty$-algebra $(s^{-1} \mathfrak{B} \oplus \mathfrak{a}, \{ \widetilde{l}_k \}_{k=1}^\infty)$.
    \end{itemize}
\end{thm}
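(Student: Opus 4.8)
The plan is to obtain both parts as instances of Voronov's construction (Theorem~\ref{thm-voro}) applied to the $V$-data $(\mathfrak{B}, \mathfrak{a}, P, \Delta = 0)$, and then to read off the Maurer-Cartan equation by a bidegree bookkeeping argument governed by Lemma~\ref{lemma-tang}.

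For part~(i), I would invoke the ``more generally'' clause of Theorem~\ref{thm-voro} with the graded Lie subalgebra $\mathfrak{B}' = \mathfrak{B}$ itself. Since $\Delta = 0$, the hypothesis $\llbracket \Delta, \mathfrak{B}' \rrbracket \subset \mathfrak{B}'$ holds vacuously and $\widetilde{l}_0 = (0, P(\Delta)) = 0$, so the resulting structure is an honest (uncurved) $L_\infty$-algebra on $s^{-1}\mathfrak{B} \oplus \mathfrak{a}$. Substituting $\Delta = 0$ into the explicit formulas of Theorem~\ref{thm-voro} collapses $\widetilde{l}_1((s^{-1}x, 0))$ to $(0, P(x))$ and annihilates every structure map whose bracket string begins with $\Delta$ (in particular all the all-$\mathfrak{a}$ maps); the only survivors are $\widetilde{l}_2$ on two $\mathfrak{B}$-inputs and $\widetilde{l}_k$ on one $\mathfrak{B}$-input together with $k-1$ inputs from $\mathfrak{a}$. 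Together with additivity in the first slot, this is exactly the list of structure maps in the statement.

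For part~(ii), I would set $\alpha = A + B$ with $A = (s^{-1}\Omega, 0)$ and $B = (0, r)$, both of degree $0$, and expand the Maurer-Cartan series $\sum_{k \geq 1} \frac{1}{k!}\, \widetilde{l}_k(\alpha, \ldots, \alpha) = 0$ by multilinearity. Because the only nonzero structure maps accept either two $\mathfrak{B}$-inputs (namely $\widetilde{l}_2$) or exactly one $\mathfrak{B}$-input, the sole surviving summands are $\widetilde{l}_2(A, A)$ and the terms $\widetilde{l}_k(A, B, \ldots, B)$ with one $A$ and $k-1$ copies of $B$; graded symmetry makes each of the latter occur $k$ times, and the factor $k$ cancels against $\frac{1}{k!}$ to leave $\frac{1}{(k-1)!}$, which reproduces the coefficients $1, \tfrac12, \tfrac16$. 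The decisive ingredient is then the bidegree calculus: with $r \in C^{-1|1}$, the operation $\llbracket -, r \rrbracket$ sends bidegree $(k,l)$ to $(k-1, l+1)$ by Lemma~\ref{lemma-tang}, so, writing $\Omega = \widetilde{\theta} + \mu + \nu + \widetilde{\eta}$ by bidegree, the $j$-fold iterate $P\llbracket \cdots \llbracket \Omega, r \rrbracket_\mathsf{B}, \ldots, r \rrbracket_\mathsf{B}$ retains after the projection $P$ onto $\oplus_n C^{-1|n+1}$ only the summand $\widetilde{\eta}, \nu, \mu, \widetilde{\theta}$ for $j = 0, 1, 2, 3$ respectively, and vanishes identically for $j \geq 4$ since the $\mathfrak{g}$-index then drops below $-1$. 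This truncation is the step I expect to require the most care.

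Collecting the surviving terms, the $s^{-1}\mathfrak{B}$-component of the Maurer-Cartan equation reduces to $-\tfrac{1}{2}\, s^{-1}\llbracket \Omega, \Omega \rrbracket_\mathsf{B} = 0$, that is, to $\llbracket \Omega, \Omega \rrbracket_\mathsf{B} = 0$, which by the discussion preceding Definition~\ref{defn-defor-map} says exactly that $(\mathcal{G} = \mathfrak{g} \oplus \mathfrak{h}, \Omega)$ is a proto-twilled Leibniz algebra. The $\mathfrak{a}$-component reduces to
\[
\eta + \llbracket \nu, r \rrbracket_\mathsf{B} + \tfrac{1}{2} \llbracket \llbracket \mu, r \rrbracket_\mathsf{B}, r \rrbracket_\mathsf{B} + \tfrac{1}{6} \llbracket \llbracket \llbracket \widetilde{\theta}, r \rrbracket_\mathsf{B}, r \rrbracket_\mathsf{B}, r \rrbracket_\mathsf{B} = 0,
\]
which is precisely the left-hand side of the Maurer-Cartan equation $l_0 + \sum_{k \geq 1} \frac{1}{k!}\, l_k(r, \ldots, r) = 0$ of the controlling algebra of Theorem~\ref{thm-mc-operator}, shown there to coincide with the deformation-map identity~(\ref{defor-map-iden}). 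Hence the full Maurer-Cartan equation for $\alpha$ is equivalent to the conjunction of these two conditions, i.e. to $(\mathcal{G} = \mathfrak{g} \oplus \mathfrak{h}, \Omega)$ being a proto-twilled Leibniz algebra and $r$ being a deformation map in it, which is the assertion.
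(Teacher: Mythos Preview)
Your proposal is correct and follows essentially the same approach as the paper: part~(i) is obtained by specializing Voronov's theorem to $\mathfrak{B}' = \mathfrak{B}$ with $\Delta = 0$, and part~(ii) is proved by expanding the Maurer-Cartan series via multilinearity, using Lemma~\ref{lemma-tang} to truncate the iterated brackets $\llbracket \cdots \llbracket \Omega, r \rrbracket_\mathsf{B}, \ldots, r \rrbracket_\mathsf{B}$ at $j=3$, and identifying the resulting $s^{-1}\mathfrak{B}$- and $\mathfrak{a}$-components with $\llbracket \Omega, \Omega \rrbracket_\mathsf{B} = 0$ and the controlling-algebra Maurer-Cartan equation of Theorem~\ref{thm-mc-operator} respectively. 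The only cosmetic difference is that the paper writes out the expansion of $\sum_{k=1}^{4} \frac{1}{k!}\,\widetilde{l}_k\big((s^{-1}\Omega, r), \ldots, (s^{-1}\Omega, r)\big)$ line by line before invoking the bidegree argument, whereas you organize the combinatorics conceptually via the $A+B$ splitting; the content is the same.
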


\begin{proof}
    As mentioned earlier, the first part follows from Theorem \ref{thm-voro}. For the second part, we observe that
    \begin{align*}
        \llbracket \llbracket \llbracket \llbracket \Omega, r \rrbracket_\mathsf{B}, r \rrbracket_\mathsf{B}, r \rrbracket_\mathsf{B}, r \rrbracket_\mathsf{B} = 0 \quad (\text{by Lemma }\ref{lemma-tang}).
    \end{align*}
    As a consequence, $\widetilde{l_k} \big(  (s^{-1} \Omega , r), \ldots,  (s^{-1} \Omega , r) \big) = 0$ for $k \geq 5$. Hence
    \begin{align*}
        &\sum_{k=1}^\infty \frac{1}{k!} ~ \! \widetilde{l_k} \big(  (s^{-1} \Omega , r), \ldots,  (s^{-1} \Omega , r)    \big) \\
        &= \widetilde{l_1} ( (s^{-1} \Omega , r)) + \frac{1}{2!} ~ \! \widetilde{l_2} \big( (s^{-1} \Omega , r), (s^{-1} \Omega , r) \big) + \frac{1}{3!} ~ \! \widetilde{l_3} \big( (s^{-1} \Omega , r), (s^{-1} \Omega , r), (s^{-1} \Omega , r) \big) \\
        & \qquad \qquad \qquad \qquad \qquad + \frac{1}{4!} ~ \! \widetilde{l_4} \big( (s^{-1} \Omega , r), (s^{-1} \Omega , r), (s^{-1} \Omega , r), (s^{-1} \Omega , r) \big) \\
        &= (0, P (\Omega)) + \frac{1}{2} \big\{  \widetilde{l_2} \big( (s^{-1} \Omega , 0), (s^{-1} \Omega , 0) \big) + 2 ~ \! \widetilde{l_2} \big(  (s^{-1} \Omega , 0), (0, r) \big)  \big\} + \frac{1}{3!} \big\{ 3 ~ \! \widetilde{l_3} \big(  (s^{-1} \Omega , 0), (0, r), (0, r) \big)  \big\} \\
        &\qquad \qquad \qquad \qquad \qquad + \frac{1}{4!} ~ \!  \big\{  4 ~ \! \widetilde{l_4} \big(   (s^{-1} \Omega , 0), (0, r) , (0, r), (0, r) \big) \big\}  \\
        &= (0, P (\Omega)) + \frac{1}{2} \big\{     (- s^{-1} \llbracket \Omega, \Omega \rrbracket_\mathsf{B}, 0) + 2 ~ \! (0 , P \llbracket \Omega, r \rrbracket_\mathsf{B})  \big\} + \frac{1}{3!} \big\{  3 ~ \! ( 0, P \llbracket \llbracket \Omega, r \rrbracket_\mathsf{B}, r \rrbracket_\mathsf{B})  \big\} \\
        &\qquad \qquad \qquad \qquad \qquad + \frac{1}{4!} ~ \!  \big\{ 4 ~ \! (0, P \llbracket \llbracket \llbracket \Omega, r \rrbracket_\mathsf{B}, r \rrbracket_\mathsf{B}, r \rrbracket_\mathsf{B}) \big\} \\
        &= \big( - \frac{1}{2} s^{-1} \llbracket \Omega, \Omega \rrbracket_\mathsf{B} ~ \! , ~ \! l_0 + l_1 (r) + \frac{1}{2!} l_2 (r, r) + \frac{1}{3!} l_3 (r, r, r)   \big) \quad (\text{see the notations from }(\ref{notation1})\text{-}(\ref{notation4})).
    \end{align*}
    Thus, $(s^{-1} \Omega , r) \in (s^{-1} \Omega \oplus \mathfrak{a})_0$ is a Maurer-Cartan element of the $L_\infty$-algebra $(s^{-1} \mathfrak{B} \oplus \mathfrak{a} , \{ \widetilde{l_k} \}_{k=1}^\infty)$ if and only if 
    \begin{align}\label{last-eqnn}
        \llbracket \Omega, \Omega \rrbracket_\mathsf{B} = 0 \quad \text{ and } \quad l_0 + l_1 (r) + \frac{1}{2!} ~ \! l_2 (r,r) + \frac{1}{3!} ~ \! l_3 (r,r,r) = 0.
    \end{align}
    Note that $\llbracket \Omega, \Omega \rrbracket_\mathsf{B} = 0$ is equivalent that $(\mathcal{G} = \mathfrak{g} \oplus \mathfrak{h}, \Omega)$ is a Leibniz algebra while the second condition of (\ref{last-eqnn}) is equivalent that $r$ is a deformation map (cf. Theorem \ref{thm-mc-operator}). This completes the proof.
\end{proof}

Let $(\mathcal{G} = \mathfrak{g} \oplus \mathfrak{h}, \Omega)$ be a proto-twilled Leibniz algebra and $r : \mathfrak{h} \rightarrow \mathfrak{g}$ be a deformation map. Then it follows from the previous theorem that $\alpha = (s^{-1} \Omega, r)$ is a Maurer-Cartan element of the $L_\infty$-algebra $( s^{-1} \mathfrak{B} \oplus \mathfrak{a}, \{ \widetilde{l_k} \}_{k=1}^\infty)$. Hence by the twisting method (cf. Theorem \ref{mc-twist-thm}), one gets the new $L_\infty$-algebra $\big( s^{-1} \mathfrak{B} \oplus \mathfrak{a} , \{ \widetilde{l_k}^{(s^{-1} \Omega, r)} \}_{k=1}^\infty  \big)$. Moreover, we have the following.

\begin{thm}
    Let $(\mathcal{G} = \mathfrak{g} \oplus \mathfrak{h}, \Omega)$ be a proto-twilled Leibniz algebra and $r : \mathfrak{h} \rightarrow \mathfrak{g}$ be a deformation map. Then for any $\Omega' \in \mathrm{Hom} (\mathcal{G}^{\otimes 2}, \mathcal{G})$ and a linear map $r' : \mathfrak{h} \rightarrow \mathfrak{g}$, the pair $(\mathcal{G} = \mathfrak{g} \oplus \mathfrak{h}, \Omega + \Omega')$ is a (proto-twilled) Leibniz algebra and $r + r' : \mathfrak{h} \rightarrow \mathfrak{g}$ is a deformation map in it if and only if  $\alpha' = (s^{-1} \Omega', r') \in (s^{-1} \mathfrak{B} \oplus \mathfrak{a})_0$ is a Maurer-Cartan element of the $L_\infty$-algebra $(s^{-1} \mathfrak{B} \oplus \mathfrak{a}, \{  \widetilde{l}_k^{  (s^{-1} \Omega, r)}  \}_{k=1}^\infty)$.
\end{thm}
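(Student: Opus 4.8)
The plan is to obtain the statement as an instance of the general twisting principle: if $\alpha$ is a Maurer-Cartan element of an $L_\infty$-algebra, then an element $\alpha'$ solves the Maurer-Cartan equation of the $\alpha$-twisted algebra precisely when $\alpha + \alpha'$ solves the Maurer-Cartan equation of the original algebra. Writing $\alpha = (s^{-1}\Omega, r)$ and $\alpha' = (s^{-1}\Omega', r')$, I would first recall from Theorem \ref{mc-simul}(ii) that, since $(\mathcal{G} = \mathfrak{g} \oplus \mathfrak{h}, \Omega)$ is a proto-twilled Leibniz algebra and $r$ is a deformation map, $\alpha$ is a Maurer-Cartan element of $(s^{-1}\mathfrak{B} \oplus \mathfrak{a}, \{\widetilde{l}_k\}_{k=1}^\infty)$. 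This legitimizes the twisting construction of Theorem \ref{mc-twist-thm} that produces the structure maps $\{\widetilde{l}_k^{(s^{-1}\Omega, r)}\}_{k=1}^\infty$.

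The core step is to expand the Maurer-Cartan equation of $\alpha + \alpha'$ for the untwisted algebra. Both $\alpha$ and $\alpha'$ are homogeneous of degree $0$, so the graded symmetry of the maps $\widetilde{l}_m$ produces no Koszul signs among them and the expansion is purely combinatorial:
\[
\widetilde{l}_m(\alpha + \alpha', \ldots, \alpha + \alpha') = \sum_{k=0}^m \binom{m}{k}\, \widetilde{l}_m\big(\underbrace{\alpha, \ldots, \alpha}_{m-k}, \underbrace{\alpha', \ldots, \alpha'}_{k}\big).
\]
Summing $\tfrac{1}{m!}$ times this over $m$ and using $\tfrac{1}{m!}\binom{m}{k} = \tfrac{1}{(m-k)!\,k!}$, the terms with $k = 0$ recombine into the Maurer-Cartan equation of $\alpha$, which vanishes, while the terms with $k \geq 1$ recombine, after setting $n = m - k$, into
\[
\sum_{k=1}^\infty \frac{1}{k!} \sum_{n=0}^\infty \frac{1}{n!}\, \widetilde{l}_{n+k}\big(\underbrace{\alpha, \ldots, \alpha}_{n}, \underbrace{\alpha', \ldots, \alpha'}_{k}\big) = \sum_{k=1}^\infty \frac{1}{k!}\, \widetilde{l}_k^{(s^{-1}\Omega, r)}(\alpha', \ldots, \alpha'),
\]
which is exactly the Maurer-Cartan equation of $\alpha'$ in the twisted algebra, by the very definition of the twisted maps in Theorem \ref{mc-twist-thm}. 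Hence $\alpha + \alpha'$ is a Maurer-Cartan element of the untwisted algebra if and only if $\alpha'$ is a Maurer-Cartan element of the twisted one.

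To conclude, I would apply Theorem \ref{mc-simul}(ii) once more, now to the pair $(\Omega + \Omega', r + r')$: it identifies the condition that $\alpha + \alpha' = (s^{-1}(\Omega + \Omega'), r + r')$ be a Maurer-Cartan element with the condition that $(\mathcal{G} = \mathfrak{g} \oplus \mathfrak{h}, \Omega + \Omega')$ be a proto-twilled Leibniz algebra and $r + r'$ be a deformation map in it. Chaining this with the equivalence of the previous paragraph yields the theorem.

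The point requiring the most care is justifying that all the sums involved are finite, so that interchanging and regrouping the summations is legitimate and the sign-free multinomial rearrangement is rigorous. This is guaranteed by Lemma \ref{lemma-tang}: each iterated Balavoine bracket defining $\widetilde{l}_k$ raises the bidegree by that of an $\mathfrak{a}$-element, so that $\widetilde{l}_k(\alpha + \alpha', \ldots, \alpha + \alpha')$ vanishes for $k \geq 5$, exactly as in the proof of Theorem \ref{mc-simul}(ii). With only finitely many nonzero terms the argument is complete.
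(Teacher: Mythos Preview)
Your proposal is correct and follows essentially the same approach as the paper: the paper's proof consists of the single displayed identity
\[
\sum_{k=1}^4 \frac{1}{k!}\,\widetilde{l_k}\big((s^{-1}(\Omega+\Omega'),r+r'),\ldots\big)=\sum_{k=1}^4 \frac{1}{k!}\,\widetilde{l_k}^{(s^{-1}\Omega,r)}\big((s^{-1}\Omega',r'),\ldots\big),
\]
attributed to ``a straightforward calculation,'' and you have simply spelled out that calculation via the binomial expansion and the definition of the twisted maps, together with the same finiteness justification from Lemma~\ref{lemma-tang}.
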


\begin{proof}
By a straightforward calculation, we get that
\begin{align*}
    \sum_{k=1}^4 \frac{1}{k!} ~ \! \widetilde{l_k} &\big(    \big( s^{-1} (\Omega + \Omega'), r+ r' \big), \ldots,  \big( s^{-1} (\Omega + \Omega'), r+ r' \big)    \big) \\
   & = \sum_{k=1}^4 \frac{1}{k!} ~ \! \widetilde{l_k}^{ (s^{-1} \Omega, r) } \big( (s^{-1} \Omega', r'), \ldots, (s^{-1} \Omega', r')   \big)
\end{align*}
which assures the result.
\end{proof}

It follows from the above theorem that the $L_\infty$-algebra $(s^{-1} \mathfrak{B} \oplus \mathfrak{a}, \{  \widetilde{l}_k^{  (s^{-1} \Omega, r)}  \}_{k=1}^\infty)$ governs the simultaneous deformations of the given proto-twilled Leibniz algebra and the deformation map.

\medskip

Given two vector spaces $\mathfrak{g}$ and $\mathfrak{h}$, in Theorem \ref{mc-simul}, we constructed the $L_\infty$-algebra $(s^{-1} \mathfrak{B} \oplus \mathfrak{a} , \{ \widetilde{l_k} \}_{k=1}^\infty)$ whose Maurer-Cartan elements are precisely given by pairs $(\Omega, r)$ consisting of a Leibniz algebra structure $\Omega$ on the direct sum $\mathcal{G} = \mathfrak{g} \oplus \mathfrak{h}$ of the vector spaces and a deformation map $r : \mathfrak{h} \rightarrow \mathfrak{g}$ on it. To obtain this $L_\infty$-algebra, we have used Theorem \ref{thm-voro} for the graded Lie subalgebra $\mathfrak{B}' \subset \mathfrak{B}$. However, we could choose some different graded Lie subalgebras $\mathfrak{B}' \subset \mathfrak{B}$ and get new $L_\infty$-algebras. For example, if we take $\mathfrak{B}' = \oplus_{n=0}^\infty C^{n|0}$ (this is a graded Lie subalgebra of $\mathfrak{B}$ by Lemma \ref{lemma-tang}) then the corresponding $L_\infty$-algebra becomes $(s^{-1} \mathfrak{B}' \oplus \mathfrak{a} , \{ \widetilde{l_k} \}_{k=1}^\infty)$. Note that a Maurer-Cartan element of this $L_\infty$-algebra is precisely given by a pair $(\mu, r)$, where $\mu \in (s^{-1} \mathfrak{B}')_0 = (\mathfrak{B}')_1 = C^{1 | 0}$ and $r : \mathfrak{h} \rightarrow \mathfrak{g}$ is a linear map subject to satisfy
\begin{align*}
    \llbracket \mu, \mu \rrbracket_\mathsf{B} = 0 \quad \text{ and } \quad \llbracket \llbracket \mu, r \rrbracket_\mathsf{B}, r \rrbracket_\mathsf{B} = 0.
\end{align*}
The condition $\llbracket \mu, \mu \rrbracket_\mathsf{B} = 0$ is equivalent that $\mathfrak{g}$ is a Leibniz algebra and $\mathfrak{h}$ is a representation of it, while $\llbracket \llbracket \mu, r \rrbracket_\mathsf{B}, r \rrbracket_\mathsf{B} = 0$ is equivalent that $r : \mathfrak{h} \rightarrow \mathfrak{g}$ is relative Rota-Baxter operator of weight $0$. Thus, a Maurer-Cartan element precisely describes a relative Rota-Baxter Leibniz algebra structure (i.e. a Leibniz algebra, a representation with a relative Rota-Baxter operator of weight $0$) \cite{das-leib}. Now, given a relative Rota-Baxter Leibniz algebra structure, we can construct the governing $L_\infty$-algebra $\big( s^{-1} \mathfrak{B}' \oplus \mathfrak{a}, \{ \widetilde{l_k}^{(\mu, r)} \}_{k=1}^\infty \big)$ just by the twisting procedure.

Next, let $\mathfrak{B}'' = \oplus_{n=0}^\infty (C^{n+1|-1} \oplus C^{n|0})$. Then it follows from Lemma \ref{lemma-tang} that $\mathfrak{B}'' \subset \mathfrak{B}$ is also a graded Lie subalgebra. Hence the corresponding $L_\infty$-algebra becomes $(s^{-1} \mathfrak{B}'' \oplus \mathfrak{a} , \{ \widetilde{l_k} \}_{k=1}^\infty)$. A Maurer-Cartan element of this $L_\infty$-algebra is given by a pair $(\Omega, r)$, where $\Omega \in (s^{-1} \mathfrak{B}'')_0 = \mathfrak{B}''_1 = C^{2|-1} \oplus C^{1|0}$ (i.e. $\Omega = \widetilde{\theta} + \mu$ with $\widetilde{\theta} \in C^{2|-1}$ and $\mu \in C^{1|0}$ ~\!) and $r: \mathfrak{h} \rightarrow \mathfrak{g}$ is a linear map satisfying
\begin{align}\label{twisted-simul}
    \llbracket \Omega , \Omega \rrbracket_\mathsf{B} = 0 \quad \text{ and } \quad \frac{1}{2!} \llbracket \llbracket \mu, r \rrbracket_\mathsf{B} , r \rrbracket_\mathsf{B} + \frac{1}{3!} \llbracket \llbracket \llbracket \widetilde{\theta}, r \rrbracket_\mathsf{B}, r \rrbracket_\mathsf{B}, r \rrbracket_\mathsf{B} = 0.
\end{align}
The condition $\llbracket \Omega , \Omega \rrbracket_\mathsf{B} = 0$ is equivalent to $\llbracket \mu, \mu \rrbracket_\mathsf{B} = \llbracket \mu, \widetilde{\theta} \rrbracket_\mathsf{B} = 0$ which simply mean that $\mathfrak{g}$ is a Leibniz algebra, $\mathfrak{h}$ is a representation of it and $\theta$ is a Leibniz $2$-cocycle. On the other hand, the second condition of (\ref{twisted-simul}) is equivalent that $r: \mathfrak{h} \rightarrow \mathfrak{g}$ is a $\theta$-twisted Rota-Baxter operator. Thus, a Maurer-Cartan element corresponds to a $\theta$-twisted Rota-Baxter Leibniz algebra structure. Given such a structure (i.e. having a Maurer-Cartan element), we can also obtain the governing $L_\infty$-algebra by the twisting procedure.

It follows from  Proposition \ref{prop-mqr} that $\mathcal{M} = \oplus_{n=0}^\infty \mathcal{M}_n$ is also a graded Lie subalgebra of $\mathfrak{B}$. In this case, the corresponding $L_\infty$-algebra becomes $( s^{-1} \mathcal{M} \oplus \mathfrak{a} , \{ \widetilde{l_k} \}_{k=1}^\infty )$. A Maurer-Cartan element of this $L_\infty$-algebra is precisely a pair $(\Omega , r)$, where $\Omega \in (s^{-1} \mathcal{M})_0 = \mathcal{M}_1 = C^{1|0} \oplus C^{0|1}$ (hence $\Omega = \mu + \nu$ with $\mu \in C^{1|0}$ and $\nu \in C^{0|1}$ ~ \!) and $r : \mathfrak{h} \rightarrow \mathfrak{g}$ is a linear map satisfying
\begin{align}\label{last-matched}
    \llbracket \Omega , \Omega \rrbracket_\mathsf{B} = 0 \quad \text{ and } \quad \llbracket \nu ,  r \rrbracket_\mathsf{B} + \frac{1}{2}\llbracket \llbracket \mu,  r \rrbracket_\mathsf{B} , r \rrbracket_\mathsf{B} = 0.
\end{align}
The condition $\llbracket \Omega, \Omega \rrbracket_\mathsf{B} = 0$ is equivalent that $(\mathcal{G} = \mathfrak{g} \oplus \mathfrak{h}, \Omega)$ is a matched pair of Leibniz algebras, and the second condition of (\ref{last-matched}) is equivalent that $r: \mathfrak{h} \rightarrow \mathfrak{g}$ is a deformation map in the matched pair of Leibniz algebras (see Proposition \ref{def-matched-prop}). Thus, a Maurer-Cartan element describes a matched pair of Leibniz algebras and a deformation map on it. Finally, given such a structure, we can also find the governing $L_\infty$-algebra by the twisting method.

\medskip

\medskip

\noindent {\bf Acknowledgements.} All the authors thank the Department of Mathematics, IIT Kharagpur for providing the beautiful academic atmosphere where the research has been conducted.

\medskip

\noindent {\bf Funding.} Suman Majhi thanks UGC for the PhD research fellowship and Ramkrishna Mandal thanks the Government of India for supporting his work through the Prime Minister Research Fellowship.

\medskip

\noindent {\bf Data Availability Statement.} Data sharing does not apply to this article as no new data were created or analyzed in this study.

\end{document}